\newtheorem{theorem}{Theorem}[section]
\newtheorem{definition}{Definition}[section]
\newtheorem{lemma}{Lemma}[section]
\newtheorem{remark}{Remark}[section]
\numberwithin{equation}{section}
\newcommand{\R}{{\mathbb R}}
\begin{document}
\title[Vanishing Viscosity Limit]{Vanishing Viscosity Limit of the Navier-Stokes Equations to the Euler Equations for Compressible Fluid Flow with vacuum}

\author{Yongcai Geng}
\address[Y. C. Geng]{School of Science,
         Shanghai Institute of Technology,
         Shanghai 200235, P.R.China}
\email{\tt ycgengjj@163.com}

\author{Yachun Li}
\address[Y. C. Li]{Department of Mathematics, MOE-LSC, and SHL-MAC, Shanghai Jiao Tong University, Shanghai 200240, P.R.China} \email{\tt ycli@sjtu.edu.cn}

\author{Shengguo Zhu}
\address[S. G.  Zhu]{Mathematical Institute,
University of Oxford, Oxford,  OX2 6GG, UK.}
\email{\tt shengguo.zhu@maths.ox.ac.uk}

\begin{abstract}
We establish the vanishing viscosity limit of the Navier-Stokes
equations to the Euler equations  for three-dimensional compressible
isentropic  flow in the whole space.  When the viscosity coefficients  are given as  constant multiples of the  density's power ($\rho^\delta$ with $\delta>1$),  it is shown that there exists a
unique regular solution of compressible  Navier-Stokes equations
with arbitrarily large initial data
and vacuum, whose life span is uniformly positive in the
vanishing viscosity limit. It is worth paying special attention that, via introducing a  `` quasi-symmetric hyperbolic"--``degenerate elliptic" coupled structure to control the behavior of the  velocity of the fluid  near the vacuum, 
we can also give some uniform  estimates for 
$\displaystyle\big(\rho^{\frac{\gamma-1}{2}}, u\big)$ in $H^3$ 
and $\rho^{\frac{\delta-1}{2}}$ in $H^2$  with respect to the viscosity coefficients (adiabatic exponent $\gamma>1$ and $1<\delta \leq \min\{3, \gamma\}$), which
lead the strong convergence of the regular solution of  the viscous flow to
that of the  inviscid flow  in $L^{\infty}([0, T]; H^{s'})$ (for any
$s'\in [2, 3)$)  with the rate of $\epsilon^{2(1-s'/3)}$. Further more, we point out  that our framework  in this paper is applicable to other physical dimensions, say 1 and 2, with some minor modifications. 

\end{abstract}

\date{Apr. 15, 2019}
\subjclass[2010]{Primary: 35B40, 35A05, 76Y05; Secondary: 35B35, 35L65}  \keywords{Compressible Navier-Stokes equations, three-dimensions,  regular solutions, vanishing viscosity limit, vacuum, degenerate viscosity.\\
}

\maketitle

\section{Introduction}\ \\

In this paper, we investigate the inviscid limit problem  of the 3D isentropic compressible  Navier-Stokes equations with   degenerate viscosities, when   the initial data contain vacuum and  are arbitrarily large.  For this purpose, we consider the following isentropic compressible  Navier-Stokes equations (ICNS) in  $\mathbb{R}^3$:
\begin{equation}
\label{eq:1.1}
\begin{cases}
\rho_t+\text{div}(\rho u)=0,\\[4pt]
(\rho u)_t+\text{div}(\rho u\otimes u)
  +\nabla
   P =\text{div} \mathbb{T}.
\end{cases}
\end{equation}
We look for the above system's  local  regular solutions with initial data
\begin{equation} \label{eq:2.211m}
(\rho,u)|_{t=0}=(\rho_0(x),u_0(x)),\quad x\in \mathbb{R}^3,
\end{equation}
and the  far field behavior
\begin{equation} \label{eq:2.211}
(\rho,u)\rightarrow (0,0) \quad \text{as } \quad |x|\rightarrow +\infty,\quad t\geq 0.
\end{equation}
Usually, such kind of  far field behavior occurs naturally under some physical assumptions on \eqref{eq:1.1}'s solutions, such as
finite total mass and total energy.

In system (\ref{eq:1.1}), $x=(x_1,x_2,x_3)\in \mathbb{R}^3$, $t\geq 0$ are the space and time variables, respectively, $\rho$ is the density, and $u=\left(u^{(1)},u^{(2)},u^{(3)}\right)^\top\in \mathbb{R}^3$ is the velocity of the fluid. In considering the polytropic gases, the constitutive relation, which is also called the equations of state, is given by
\begin{equation}
\label{eq:1.2}
P=A\rho^{\gamma}, \quad \gamma> 1,
\end{equation}
where $A>0$ is  an entropy  constant and  $\gamma$ is the adiabatic exponent. $\mathbb{T}$ denotes the viscous stress tensor with the  form
\begin{equation}
\label{eq:1.3}
\begin{split}
&\mathbb{T}=\mu(\rho)\left(\nabla u+(\nabla u)^\top\right)+\lambda(\rho) \text{div}u\,\mathbb{I}_3,\\
\end{split}
\end{equation}
 where $\mathbb{I}_3$ is the $3\times 3$ identity matrix,
\begin{equation}
\label{fandan}
\mu(\rho)=\epsilon \alpha\rho^\delta,\quad \lambda(\rho)=\epsilon \beta\rho^\delta,
\end{equation}
 $\mu(\rho)$ is the shear viscosity coefficient, $\lambda(\rho)+\frac{2}{3}\mu(\rho)$ is the bulk viscosity coefficient, $\epsilon \in (0,1]$ is a   constant,  $\alpha$ and $\beta$ are both constants satisfying
 \begin{equation}\label{10000}\alpha>0,\quad  2\alpha+3\beta\geq 0,
 \end{equation}
and in this paper, we assume that the constant $\delta$ satisfies
 \begin{equation}\label{100001} 1< \min\{\delta, \gamma\}\leq 3.
 \end{equation}
In addition,  when $\epsilon=0$, from
\eqref{eq:1.1}, we naturally have the compressible isentropic Euler
equations for  the  inviscid flow:
\begin{equation}
\label{eq:1.1E}
\begin{cases}
\displaystyle
\rho_t+\text{div}(\rho u)=0,\\[8pt]
\displaystyle
(\rho u)_t+\text{div}(\rho u\otimes u)
  +\nabla
   P =0,
\end{cases}
\end{equation}
which is a fundamental example of a system of hyperbolic conservation laws.

Throughout this paper, we adopt the following simplified notations, most of which  are for the standard homogeneous and inhomogeneous Sobolev spaces:
\begin{equation*}\begin{split}
& |f|_p=\|f\|_{L^p(\mathbb{R}^3)},\quad \|f\|_s=\|f\|_{H^s(\mathbb{R}^3)},\quad  |f|_2=\|f\|_0=\|f\|_{L^2(\mathbb{R}^3)}, \\[8pt]
&D^{k,r}=\{f\in L^1_{loc}(\mathbb{R}^3):  |\nabla^kf|_{r}<+\infty\},\quad D^k=D^{k,2},\quad |f|_{D^{k,r}}=\|f\|_{D^{k,r}(\mathbb{R}^3)}\ (k\geq 2),\\[8pt]
&  D^{1}=\{f\in L^6(\mathbb{R}^3):  |\nabla f|_{2}<\infty\},\quad|f|_{D^{1}}=\|f\|_{D^{1}(\mathbb{R}^3)},\quad \int_{\mathbb{R}^3} f\text{d}x=\int f.
\end{split}
\end{equation*}
 A detailed study of homogeneous Sobolev spaces  can be found in Galdi \cite{gandi}.

\subsection{Existence theories of compressible flow with vacuum}\ \\

Before formulating  our problem,  we first  briefly recall a series of  frameworks on the   well-posedness of  multi-dimensional  strong solutions with initial vacuum  established for the hydrodynamics equations  mentioned above in the whole space.  For the inviscid flow, in  1987, via writing \eqref{eq:1.1E} as a symmetric hyperbolic form that allows the density to vanish, Makino-Ukai-Kawshima \cite{tms1} obtained     the  local-in-time existence of the unique  regular solution  with $\inf \rho_0=0$,  which  can be shown  by
\begin{theorem} \cite{tms1}\label{thmakio}
Let   $ \gamma >1$. If the initial data $( \rho_0, u_0)$ satisfy
\begin{equation}\label{th78asd}
\begin{split}
&\rho_0\geq 0,\quad \big(\rho^{\frac{\gamma-1}{2}}_0, u_0\big)\in
H^3(\mathbb{R}^3),
\end{split}
\end{equation}
then there exist a  time $T_0>0$ and a unique regular solution
$(\rho, u)$ to Cauchy problem \eqref{eq:1.1E} with
\eqref{eq:2.211m}--\eqref{eq:2.211} satisfying
\begin{equation}\label{reg11asd}\begin{split}
& \big(\rho^{\frac{\gamma-1}{2}},u\big) \in C([0,T_0];H^3),\quad \big((\rho^{\frac{\gamma-1}{2}})_t,u_t\big) \in C([0,T_0];H^2),
\end{split}
\end{equation}
where the  regular solution $(\rho, u)$  to \eqref{eq:1.1E} with
\eqref{eq:2.211m}--\eqref{eq:2.211}  is defined by
\begin{equation*}\begin{split}
&(\textrm{A})\quad  \big(\rho, u \big) \  \text{satisfies   (\ref{eq:1.1E}) with
\eqref{eq:2.211m}--\eqref{eq:2.211} in the sense of  distributions};\\
&(\textrm{B})\quad  \rho\geq 0,\ \big(\rho^{\frac{\gamma-1}{2}}, u\big)\in C^1([0,T_0]\times \mathbb{R}^3); \\
&(\textrm{C})\quad u_t+u\cdot\nabla u =0 \quad  \text{when } \quad
\rho(t,x)=0.
\end{split}
\end{equation*}

\end{theorem}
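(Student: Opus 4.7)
The plan is to apply the Makino--Ukai--Kawashima symmetrization: introduce the new unknown $\phi := \rho^{(\gamma-1)/2}$ in place of $\rho$. The obstruction to directly applying classical quasilinear hyperbolic theory to (\ref{eq:1.1E}) is the factor $\nabla P/\rho = A\gamma\rho^{\gamma-2}\nabla\rho$, which is singular as $\rho\to 0^+$; but the particular exponent $(\gamma-1)/2$ is chosen precisely so that this factor rewrites as $\tfrac{2A\gamma}{\gamma-1}\phi\nabla\phi$, a smooth expression in $\phi$ across the vacuum.

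Concretely, I would first derive (initially on $\{\rho>0\}$) the closed first-order system
\begin{equation*}
\phi_t + u\cdot\nabla\phi + \tfrac{\gamma-1}{2}\phi\,\mathrm{div}\,u = 0,\qquad
u_t + u\cdot\nabla u + \tfrac{2A\gamma}{\gamma-1}\phi\nabla\phi = 0,
\end{equation*}
for the unknown $U = (\phi, u_1, u_2, u_3)^\top$. With the symmetrizer $S := \mathrm{diag}\bigl(\tfrac{4A\gamma}{(\gamma-1)^2}, 1, 1, 1\bigr)$, the matrices $SA_j(U)$ of the first-order part become symmetric and depend smoothly on $U$ with no degeneration at $\phi=0$; the reformulated system is therefore Friedrichs-symmetrizable on a full neighborhood of the vacuum set.

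Granted this reformulation, local-in-time existence and uniqueness of $U\in C([0,T_0];H^3)$ for the initial datum $U_0=(\phi_0,u_0)\in H^3$ follow from the classical theory of quasilinear symmetric hyperbolic systems (Kato, Majda), with $T_0$ bounded below by a decreasing function of $\|U_0\|_3$. The regularity $(\phi_t, u_t)\in C([0,T_0];H^2)$ stated in (\ref{reg11asd}) is then read directly off the equations, using that $H^2(\mathbb{R}^3)$ is a Banach algebra so that products such as $u\cdot\nabla\phi$ and $\phi\,\mathrm{div}\,u$ remain in $H^2$. The density is recovered through $\rho:=\phi^{2/(\gamma-1)}\ge 0$, and condition (C) follows immediately: on $\{\rho=0\}=\{\phi=0\}$ the pressure term $\tfrac{2A\gamma}{\gamma-1}\phi\nabla\phi$ vanishes pointwise, so the momentum equation collapses to $u_t+u\cdot\nabla u=0$.

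The main delicate point is verifying (A)---that the $(\rho,u)$ so constructed solves the original conservative system (\ref{eq:1.1E}) in the sense of distributions across the free boundary $\{\phi=0\}$. On $\{\phi>0\}$ this is a direct computation reversing the derivation above; on the vacuum set one uses continuity, which holds because $(\phi,u)\in C^1([0,T_0]\times\mathbb{R}^3)$ by Sobolev embedding of $H^3$ in three dimensions, and hence $\rho=\phi^{2/(\gamma-1)}$ and $\rho u$ are continuous with each term in (\ref{eq:1.1E}) lying in $L^1_{\mathrm{loc}}$, so the identity propagates from $\{\rho>0\}$ to all of $[0,T_0]\times\mathbb{R}^3$ by density.
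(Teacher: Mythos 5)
Your proposal is correct and is essentially the argument of the cited reference \cite{tms1}: the paper itself does not prove this theorem but quotes it, and its own symmetric formulation in Section 1.3 and the system \eqref{E:4.2} in Section 5 are exactly the sound-speed symmetrization $\phi=\rho^{\frac{\gamma-1}{2}}$ you describe, with $A_0$ a scalar multiple of your symmetrizer $S$. The only point to flag is that your final "propagate by density" step for property (A) deserves the same care the original reference gives it when $\gamma>3$ (so that $2/(\gamma-1)<1$ and the chain-rule factor $\phi^{(3-\gamma)/(\gamma-1)}$ degenerates), but this is a known technicality rather than a gap in the approach.
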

It should be pointed out that  the condition
$(\textrm{C})$  ensures the uniqueness of the 
regular solution and makes the velocity $u$ well defined in vacuum
region. Without $(\textrm{C})$, it is  difficult to get enough
information on  velocity even for  considering special cases such
as point vacuum or continuous vacuum on some surface. In 1997, by extracting a dispersive effect after some invariant transformation, Serre \cite{danni} obtained the global existence of the regular  solution shown in Theorem \ref{thmakio} with small density.

 For  the  constant viscous flow (i.e., $\delta=0$ in (\ref{fandan})),  the corresponding  local-in-time well-posedness of strong solutions with vacuum was firstly solved by Cho-Choe-Kim \cite{CK3,guahu} in 2004-2006. To compensate the lack of positive lower bound of the  initial  density, they introduced an initial compatibility condition \begin{equation}\label{th79bn}
\begin{split}
\text{div}\mathbb{T}(u_0)+\nabla P_0=\rho_0 f,\  \text{for\ some} \ f\in D^1 \ \text{and}\  \sqrt{\rho}_0f\in L^2,
\end{split}
\end{equation}
which plays a key  role in getting some uniform  a priori estimates with respect to  the lower bound of $\rho_0$;  see also Duan-Luo-Zheng \cite{yuxi}  for 2D case.   Later, based on the uniform estimate on the upper bound of the density, Huang-Li-Xin \cite{HX1} extended this solution to be a global one under some initial smallness assumption for the isentropic flow in $\mathbb{R}^3$.

Recently,  the  degenerate viscous flow (i.e., $\delta=0$ in (\ref{fandan})) described  by (\ref{eq:1.1})  has received extensive attentions from the mathematical community (see the review papers \cite{bd2,hyp}),  which is  based on the following  two main considerations. On the one hand, through the second-order Chapman-Enskog expansion from Boltzmann equations to the compressible Navier-Stokes equations,  it is known that the viscosity coefficients are not constants but functions of the absolute temperature  (cf. Chapman-Cowling \cite{chap} and Li-Qin \cite{tlt}), which can be reduced to the dependence on density for isentropic flow  from laws of Boyle and Gay-Lussac (see \cite{tlt}). On the other hand, we do have some good models in the following $2$D shallow water equations for the height $h$ of the free surface and the mean horizontal velocity field $U$:
\begin{equation}
\label{eq:1.1ww}
\begin{cases}
h_t+\text{div}(h U)=0,\\[6pt]
(h U)_t+\text{div}(h U\otimes U)
  +\nabla h^2 =\mathcal{V}(h, U).
\end{cases}
\end{equation}
It is clear that system \eqref{eq:1.1ww} is a special case or a simple variant of system \eqref{eq:1.1} for some appropriately chosen viscous term $\mathcal{V}(h, U)$ (see \cite{BN2,Gent,gp,Mar}).  Some important progress have been obtained  in the development of the global existence of weak solutions with vacuum for system (\ref{eq:1.1}) and related models, see Bresh-Dejardins \cite{bd3,bd6,bd2}, Mellet-Vassuer \cite{vassu} and some other interesting results, c.f. \cite{  hailiang, taiping,  zyj, tyc2}.

However, in the presence of vacuum, compared with the constant viscosity case \cite{CK3},  there appear some new mathematical challenges in dealing with such systems  for constructing solutions with high regularities. In particular, these systems become highly degenerate, which results in that the velocity can not even be defined in the  vacuum domain and hence it is difficult to get uniform estimates for the velocity near the vacuum. Recently in Li-Pan-Zhu \cite{sz3,sz333}, via  carefully analyzing  the  mathematical  structure of these systems,   they reasonably gave  the time  evolution mechanism of the fluid velocity    in the vacuum domain. Taking $\delta=1$ for example, via considering the following parabolic equations with a special source term:
\begin{equation}\label{c-1}
\begin{cases}
\displaystyle
u_t+u\cdot\nabla u +\frac{2A\gamma}{\gamma-1}\rho^{\frac{\gamma-1}{2}}\nabla\rho^{\frac{\gamma-1}{2}}+Lu=(\nabla \rho/\rho) \cdot \mathbb{S}(u),\\[8pt]
\displaystyle
Lu=-\alpha\triangle u-(\alpha+\beta)\nabla \mathtt{div}u,\\[8pt]
\displaystyle
 \mathbb{S}(u)=\alpha(\nabla u+(\nabla u)^\top)+\beta\mathtt{div}u\mathbb{I}_3,
\end{cases}
\end{equation}
we could transfer the degenearcy shown in system (\ref{eq:1.1}) caused by the far field vacuum to the possible singualrity of the  quantity $\nabla \rho/\rho$, which was luckily shown to be well defined in $L^6\cap D^1$.
Based on this, by making full use of the  symmetrical structure of the hyperbolic operator and  the weak smoothing effect of the elliptic operator, they established    a series of  a priori estimates independent of the lower bound of $\rho_0$, and successfully gave the local existence theory of classical solutions with arbitrarily large data and vacuum for the case $1\leq \delta \leq \min\Big\{3,\frac{\gamma+1}{2}\Big\}$. 
We refer readers to  Zhu \cite{sz33,sz34} for more details and progress.

\subsection{Vanishing viscosity limit from viscous flow to inviscid flow}\ \\

Based on the well-posedness theory mentioned above,  naturally  there is an important question that: can we regard  the regular solution  of inviscid flow \cite{tms1,danni} as those of viscous flow \cite{CK3,  yuxi, HX1, sz3,  sz333} with vanishing real physical viscosities?

Actually, there are  lots of literature on the uniform bounds and the vanishing viscosity limit in the whole space. The idea of  regarding inviscid flow as viscous flow with vanishing real physical viscosity can date back to    Dafermos \cite{guy}, Hugoniot \cite{H1}, Rankine \cite{R1}, Rayleigh \cite{R2} and Stokes \cite{stoke}. However, until 1951, Gilbarg \cite{DG}  gave us the first rigorous convergence analysis of vanishing physical viscosities from the Navier-Stokes equations (1.1) to the isentropic Euler equations (\ref{eq:1.1E}),  and   established the mathematical existence and vanishing viscous limit of the Navier-Stokes shock layers. The framework on the convergence analysis of piecewise smooth solutions has been established by G$\grave{\text{u}}$es-M$\acute{\text{e}}$tivier-Williams-Zumbrun
\cite{OG}, Hoff-Liu \cite{HL}, and the references cited therein. The convergence of vanishing physical viscosity with general initial data was first studied by Serre-Shearer \cite{DSJWS} for a $2\times 2$ system in nonlinear elasticity with severe growth conditions on the nonlinear function in the system.
In 2009, based on the uniform energy estimates and compactness compensated argument, Chen-Perepelitsa \cite{chen4}  established the first convergence result for the vanishing physical viscosity limit of solutions of the Navier-Stokes equations to a finite-energy entropy weak solution of the isentropic Euler equations with finite-energy initial data, which has been extended to the   density-dependent viscosity case (experiencing degeneracy near
vacuum states) by  Huang-Pan-Wang-Wang-Zhai
\cite{HPWWZ}. 

However, even in   1D space, due to the complex mathematical structure of  hydrodynamics equations near the vacuum,  the existence of strong solutions  to the viscous flow  and inviscid flow are usually established in totally different frameworks, for example,  \cite{CK3} and  \cite{tms1}. 
The proofs  shown in \cite{CK3, yuxi, sz3} essentially depend on the uniform  ellipticity of the Lam\'e operator $L$,
and the   a priori estimates on the solutions
 and their life spans $T^{v}$  obtained in the above references  both strictly depend on the  real physical viscosities.  For example, when $\delta=0$ (i.e., $\mu=\epsilon \alpha$ and $\lambda=\epsilon \beta$),  we have
\begin{equation}\label{c-33}
\begin{cases}
\displaystyle
|u|_{D^{k+2}}\leq C\Big(\frac{1}{\epsilon \alpha},\frac{1}{\epsilon \beta}\Big)\big(|u_t+u\cdot \nabla u+\nabla P|_{D^k}\big),\\[10pt]
\displaystyle
 \quad T^{v}\sim O\big(\epsilon \alpha\big)+O\big(\epsilon \beta\big),
\end{cases}
\end{equation}
which implies  that the current frameworks do not seem to work for verifying the expected  limit relation. 
Thus the vanishing viscosity limit for the multi-dimensional  strong solutions in the whole space from  Navier-Stokes equations to Euler equations for compressible flow with initial vacuum in some open set or at the far field    is still an open problem. 

  In this paper,  motivated by the uniform estimates on $\epsilon^{\frac{1}{2}}\nabla^3\rho^{\frac{\delta-1}{2}} $ and $\epsilon^{\frac{1}{2}}\rho^{\frac{\delta-1}{2}} \nabla^4 u$, when the viscous stress tensor has the form (\ref{fandan})-(\ref{100001}),  we aim at giving a positive answer for this question  by introducing a  ``quasi-symmetric hyperbolic"--``degenerate elliptic" coupled structure to control the behavior of the velocity near the vacuum. We believe that the method developed in this work could give us a good understanding of the mathematical theory of vacuum, and also  can be adapted  to some other related vacuum problems in a more general framework, such as the inviscid limit problem for multi-dimensional  finite-energy weak solutions in the whole space.

\subsection{Symmetric formulation and main results}\ \\

We first need to analyze the mathematical structure of the momentum equations $(\ref{eq:1.1})_2$ carefully, which can be divided into hyberbolic, elliptic and source  parts  as follows:
\begin{equation}
\label{eq:1.1mo}
\begin{split}
\displaystyle
\underbrace{ \rho \big(u_t+ u\cdot  \nabla u\big)+\nabla P }_{\text{Hyberbolic}}&=\underbrace{-\epsilon \rho^\delta Lu}_{\text{Elliptic} }+\underbrace{\epsilon \nabla \rho^\delta \cdot  \mathbb{S}(u)}_{\text{Source}}.
\end{split}
\end{equation}
For smooth solutions $(\rho,u)$ away from the vacuum,  these equations  could be written into
\begin{equation}\label{c-2}
\begin{split}
&\underbrace{ u_t+u\cdot\nabla u +\frac{A\gamma}{\gamma-1}\nabla \rho^{\gamma-1}-\frac{\delta}{\delta-1} \epsilon\nabla \rho^{\delta-1} \cdot \mathbb{S}(u)}_{\text{Lower \ order}}=\underbrace{ -\epsilon\rho^{\delta-1}Lu}_{\text{Higher order}},
\end{split}
\end{equation}
Then if  $\rho$ is smooth enough, we could  pass to the limit as   $\rho\rightarrow 0$ on both sides of (\ref{c-2}) and  formally have
\begin{equation}\label{zhenkong}
 u_t+ u\cdot \nabla u=0 \quad \text{when}\ \rho=0,
\end{equation}
which, along with (\ref{c-2}),  implies that  the velocity $u$ can be governed by a nonlinear degenerate  parabolic system  if the density function contains vacuum.

In order to establish uniform  a priori estimates  for $u$ in $H^3$  that is  independent of $\epsilon$ and the lower bound of the initial density,  we hope that    the first order terms on the left-hand side of \eqref{c-2} could be put into a symmetric hyperbolic structure, then we can deal with the estimates on u in  $H^3$ without being affected   by the $\epsilon$-dependent degenerate elliptic operator.  However, it is  impossible. The problem  is that  the term   $ \nabla \rho^{\delta-1} \cdot \mathbb{S}(u)$ is actually a product of   two first order derivatives with the form $\sim \rho^{\delta-2} \nabla \rho \cdot \nabla u$, and obviously there is no similar term in the continuity equation $(\ref{eq:1.1})_1$.  This also tells us that  it is not enough if we only have the estimates on $\rho$.  Some more elaborate estimates for the density related quantities  are really needed.

In fact, by introducing two new quantities:
$$
\varphi=\rho^{\frac{\delta-1}{2}},\quad \text{and} \quad \phi=\rho^{\frac{\gamma-1}{2}},
$$
equations \eqref{eq:1.1} can be rewritten into a new system that consists of a transport equation for $\varphi$, and
a  ``quasi-symmetric hyperbolic"--``degenerate elliptic" coupled system with some special lower order source terms for $(\phi,u)$: 
\begin{equation}\label{li46}
\begin{cases}
\displaystyle
\underbrace{\varphi_t+u\cdot\nabla\varphi+\frac{\delta-1}{2}\varphi\text{div} u=0}_{\text{Transport equations}},\\[10pt]
\displaystyle
\underbrace{A_0W_t+\sum_{j=1}^3A_j(W) \partial_j W}_{\text{Symmetric hyperbolic}}=\underbrace{-\epsilon \varphi^2\mathbb{{L}}(W)}_{\text{Degenerate elliptic}}+\underbrace{\epsilon \mathbb{{H}}(\varphi)  \cdot \mathbb{{Q}}(W)}_{\text{Lower order source}},
 \end{cases}
\end{equation}
where $W=(\phi, u)^\top$ and
\begin{equation} \label{sseq:5.2qq}
\begin{split}
\mathbb{{L}}(W)=\left(\begin{array}{c}0\\[2pt]
a_1Lu\\
\end{array}\right),\quad \mathbb{{H}}(\varphi)=\left(\begin{array}{c}0\\[8pt]
\nabla \varphi^2\\
\end{array}\right), \quad \mathbb{{Q}}(W)=\left(\begin{array}{cc}
0 & 0\\[8pt]
0 &a_1Q(u)
\end{array}\right),\\
\end{split}
\end{equation}
with $a_1=\frac{(\gamma-1)^2}{4A\gamma}>0$ and $Q(u)=\frac{\delta}{\delta-1}\mathbb{S}(u)$.
Meanwhile, $\partial_j W=\partial{x_j}W$, and
\begin{equation} \label{eq:5.2qq1}
\begin{split}
&A_0=\left(\begin{array}{cc}
1&0\\[8pt]
0&\frac{(\gamma-1)^2}{4A\gamma}\mathbb{I}_3
\end{array}
\right),\quad
\displaystyle
A_j=\left(\begin{array}{cc}
u^{(j)}&\frac{\gamma-1}{2}\phi e_j\\[8pt]
\frac{\gamma-1}{2}\phi e_j^\top &\frac{(\gamma-1)^2}{4A\gamma}u^{(j)}\mathbb{I}_3
\end{array}
\right),\quad j=1,2,3.\\
\end{split}
\end{equation}
Here  $e_j=(\delta_{1j},\delta_{2j},\delta_{3j})$ $(j=1,2,3)$ is the Kronecker symbol satisfying $\delta_{ij}=1$, when $ i=j$ and $\delta_{ij}=0$, otherwise. For any $\xi\in \R^4$, we have
\begin{equation} \label{sseq:5.3qq}
\xi^\top A_0\xi\geq a_2|\xi|^2 \quad \text{with} \quad  a_2=\min \left\{1, \frac{(\gamma-1)^2}{4A\gamma} \right\}>0.
\end{equation}
For simplicity, we denote the symmetric hyperbolic structure shown in the right-hand side of $(\ref{li46})_2$ as SH.

Considering the above system (\ref{li46}), first SH does not include all the first  order terms related on $\big(\rho^{\frac{\gamma-1}{2}},u\big)$ \big( i.e., $\epsilon \mathbb{{H}}(\varphi)  \cdot \mathbb{{Q}}(W)\big)$, so we called the equations $(\ref{li46})_2$  a ``quasi-symmetric hyperbolic"--``degenerate elliptic" system. 
Second, the characteristic speeds of SH  in the direction $\textbf{l}\in S^2$ are $u\cdot \textbf{l}$, with multiplicity two, and $u\cdot \textbf{l}\pm \sqrt{P_\rho}$, with multiplicity one, which means that  this structure fails to be strictly hyperbolic near the vacuum even in one-dimensional or two-dimensional spaces.
At last,   this formulation implies  that if we can give some reasonable  analysis  on the additional variable  $\rho^{\frac{\delta-1}{2}}$,  such that the $\rho^{\frac{\delta-1}{2}}$-related terms will vanish as $\epsilon\rightarrow 0$ and also do  not influence the estimates on $\big(\rho^{\frac{\gamma-1}{2}},u\big)$, then  it is hopeful for us to get  the desired  uniform estimates on  $\big(\rho^{\frac{\gamma-1}{2}},u\big)$ in $H^3$.

Based on the above observations, we first  introduce a proper class of solutions to  system (\ref{eq:1.1}) with arbitrarily large initial data and vacuum.
\begin{definition}[\text{\textbf{Regular solution to the Cauchy problem (\ref{eq:1.1})-(\ref{eq:2.211})}}]\label{d1}
 Let $T> 0$ be a finite constant. A solution $(\rho,u)$ to the  Cauchy problem (\ref{eq:1.1})-(\ref{eq:2.211})  is called a regular solution in $ [0,T]\times \mathbb{R}^3$ if $(\rho,u)$ satisfies this problem in the sense of distributions and:
\begin{equation*}\begin{split}
&(\textrm{A})\quad  \rho\geq 0, \  \rho^{\frac{\delta-1}{2}}\in C([0,T]; H^3), \ \  \rho^{\frac{\gamma-1}{2}}\in C([0,T]; H^3); \\
& (\textrm{B})\quad u\in C([0,T]; H^{s'})\cap L^\infty(0,T; H^{3}),\quad  \rho^{\frac{\delta-1}{2}}\nabla^4 u \in L^2(0,T; L^2);\\
&(\textrm{C})\quad u_t+u\cdot\nabla u =0\quad  \text{as } \quad  \rho(t,x)=0.
\end{split}
\end{equation*}
where $s'\in[2,3)$ is an arbitrary constant.
%\begin{equation} \label{eq:5.1}\end{equation}
\end{definition}

In order to establish the vanishing viscosity limit from the  viscous
flow to the inviscid flow,  first we give the following uniform
(with respect to $\epsilon$) local-in-time well-posedness to the 
Cauchy problem (\ref{eq:1.1})-(\ref{eq:2.211}).

\begin{theorem}[\text{\textbf{Uniform Regularity}}]\label{th2} Let (\ref{100001}) hold. If  initial data $( \rho_0, u_0)$ satisfy
\begin{equation}\label{th78}
\rho_0\geq 0,\quad (\rho^{\frac{\gamma-1}{2}}_0, \rho^{\frac{\delta-1}{2}}_0, u_0)\in H^3,
\end{equation}
then there exists a time $T_*>0$ independent of $\epsilon$, and a unique regular solution $(\rho, u)$ in $[0,T_*]\times \mathbb{R}^3$ to  the Cauchy problem (\ref{eq:1.1})-(\ref{eq:2.211}) satisfying the following uniform estimates:
\begin{equation}\label{qiyu}
\begin{split}
  \sup_{0\leq t \leq T_*}\Big(||\rho^{\frac{\gamma-1}{2}}||^2_3+||\rho^{\frac{\delta-1}{2}}||^2_2+\epsilon|\rho^{\frac{\delta-1}{2}}|^2_{D^3}+||u||_2^2\Big)(t)&
\\
 +\text{ess}\sup_{0\leq t \leq T_*}| u(t)|^2_{D^3}+\int_0^t\epsilon
 |\rho^{\frac{\delta-1}{2}}\nabla^4 u|_2^2ds\leq& C^0,
\end{split}
\end{equation}
for arbitrary constant $s'\in [2,3)$ and  positive constant $C^0=C^0(\alpha, \beta, A, \gamma, \delta, \rho_0, u_0)$. Actually,   $(\rho, u)$ satisfies the Cauchy problem (\ref{eq:1.1})-(\ref{eq:2.211}) classically in positve time  $(0, T_*]$.

Moreover, if  the following  condition holds,
\begin{equation}\label{regco}
1< min\{\delta,\gamma\} \leq 5/3, \quad \text{or} \quad  \delta=2, 3, \quad \text{or} \quad \gamma=2, 3, 
\end{equation}
we still have
\begin{equation}\label{regco11}\begin{split}
& \rho\in C([0,T_*];H^3),\quad  \rho_t \in C([0,T_*];H^2).
\end{split}
\end{equation}
\end{theorem}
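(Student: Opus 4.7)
The plan is to work with the reformulated system (\ref{li46}) and build the regular solution in three stages: linearization with vacuum regularization, uniform (in $\epsilon$) a priori estimates, and passage to the limit. First I would regularize the initial data by replacing $\rho_0$ with $\rho_0+\eta$ for a small parameter $\eta>0$ so that the initial density is bounded strictly away from zero. For such strictly positive density the operator $-\epsilon\rho^{\delta-1}L$ is uniformly elliptic and, after linearizing $(\ref{li46})_2$ around iterates $(v,\psi)$ with $v$ replacing $u$ and $\psi$ replacing $\varphi,\phi$ in the coefficients, one solves the resulting linear parabolic system together with the linear transport equations for $\varphi$ and $\phi$ by a standard Picard-type iteration. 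Local existence of an $\eta$-approximate regular solution then follows from adapting the Cho-Choe-Kim framework \cite{CK3}; the real work is to propagate bounds that survive both $\eta\to0$ and $\epsilon\to0$.

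The heart of the proof is the uniform a priori estimate (\ref{qiyu}). I would estimate $(\phi,u)$ and $\varphi$ simultaneously by applying $\partial^{k}$ with $|k|\le 3$ to the system and testing with $A_0\,\partial^k W$ on the $(\phi,u)$-equation and with $\partial^k\varphi$ on the transport equation for $\varphi$. The symmetry of $A_0$ and $A_j$ in $\sum_j A_j(W)\partial_j W$ kills the dangerous top-order terms in the hyperbolic block exactly as in Makino-Ukai-Kawashima \cite{tms1}, producing only commutators that can be bounded by $\|W\|_3^{3}$. Integration by parts on $-\epsilon\varphi^2\mathbb{L}(W)\cdot \partial^k W$ (for $|k|\le3$) produces the good dissipative term $\epsilon\int \varphi^2|\partial^k\nabla u|^2$, which at the top level $|k|=3$ is precisely the quantity $\epsilon|\rho^{(\delta-1)/2}\nabla^4 u|_2^2$ appearing in (\ref{qiyu}). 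The source term $\epsilon\mathbb{H}(\varphi)\cdot\mathbb{Q}(W)\sim \epsilon\varphi\nabla\varphi\cdot\nabla u$ is lower order in the hyperbolic sense but interacts with the elliptic dissipation: one controls it by Sobolev embedding combined with the weighted bound $\epsilon^{1/2}\|\nabla^3\varphi\|_2$, which itself is obtained by differentiating the $\varphi$-transport equation three times, testing with $\epsilon\partial^3\varphi$, and absorbing the top-order coupling $\epsilon^{1/2}\varphi\nabla^4 u$ produced by $\partial^3(\varphi\,\text{div}\,u)$ using the dissipation just identified. This is the main obstacle: the estimates for $\epsilon^{1/2}\nabla^3\varphi$ and $\epsilon^{1/2}\varphi\nabla^4 u$ are coupled, and the whole scheme closes only because the ``quasi-symmetric hyperbolic''--``degenerate elliptic'' split in (\ref{li46}) puts the genuinely first-order source into $\mathbb{H}(\varphi)\cdot\mathbb{Q}(W)$, where it carries an extra factor $\varphi$ that matches the weight of the elliptic dissipation.

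Once these bounds are in place, a standard Gronwall argument produces a time $T_*>0$ depending only on the $H^3$ norms of $(\rho_0^{(\gamma-1)/2},\rho_0^{(\delta-1)/2},u_0)$ but not on $\eta$ or $\epsilon$, and iterating the linear scheme yields an $\eta$-approximate regular solution on $[0,T_*]$ satisfying (\ref{qiyu}) uniformly. Weak-$*$ compactness plus Aubin-Lions compactness then gives a subsequential limit $(\rho,u)$ as $\eta\to0$ that inherits (\ref{qiyu}), and a careful treatment of the continuity-in-time using the transport structure yields the regularity in Definition \ref{d1}(A)-(B). The vacuum condition \textrm{(C)} follows by passing to the limit in the momentum equation inside any open set where $\rho$ vanishes: both $\epsilon\rho^\delta Lu$ and $\epsilon\nabla\rho^\delta\cdot\mathbb{S}(u)$ vanish pointwise because $\rho^{(\delta-1)/2}$ and its derivatives up to second order are continuous, and $\nabla P=A\gamma\rho\nabla\rho^{\gamma-1}$ vanishes similarly. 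Uniqueness follows from an $L^2$-energy estimate on the difference of two solutions written in terms of $(\rho^{(\gamma-1)/2}, u)$, using that the coefficients are bounded in $W^{1,\infty}$ by the regularity just proved.

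Finally, the extra regularity (\ref{regco11}) under condition (\ref{regco}) is a post-processing step: one needs to invert the substitutions $\varphi=\rho^{(\delta-1)/2}$ and $\phi=\rho^{(\gamma-1)/2}$ to recover $H^3$ control on $\rho$ itself. When $\min\{\delta,\gamma\}\le 5/3$, at least one exponent $2/(\delta-1)$ or $2/(\gamma-1)$ is $\ge 3$, so the composition with $z\mapsto z^{2/(\delta-1)}$ or $z\mapsto z^{2/(\gamma-1)}$ preserves $H^3$ regularity for nonnegative functions; when $\delta$ or $\gamma\in\{2,3\}$, the exponent is an integer $\le 3$ and $\rho$ is a polynomial in the corresponding square-root variable, giving $H^3$ directly. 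The time continuity of $\rho_t$ in $H^2$ then follows from the continuity equation $\rho_t=-\text{div}(\rho u)$ and the regularity of $(\rho,u)$.
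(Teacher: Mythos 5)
Your overall architecture and, crucially, your description of the core energy estimates match the paper's proof: the symmetric hyperbolic block kills the top-order hyperbolic terms, integration by parts on $-\epsilon\varphi^2\mathbb{L}(W)\cdot\partial^\zeta W$ yields the dissipation $\epsilon|\varphi\nabla^4u|_2^2$, the source $\epsilon\mathbb{H}(\varphi)\cdot\mathbb{Q}(W)$ is absorbed using the weighted bound on $\epsilon^{1/2}\nabla^3\varphi$, and the estimates for $\epsilon^{1/2}\nabla^3\varphi$ and $\epsilon^{1/2}\varphi\nabla^4u$ close as a coupled pair. The linearize-then-Picard structure, the Aubin--Lions limit passage, the recovery of condition (C) from the reformulated momentum equation, and the treatment of \eqref{regco11} via the integrality or size of $2/(\delta-1)$, $2/(\gamma-1)$ are all as in the paper.

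There is, however, one genuine gap: your regularization $\rho_0\mapsto\rho_0+\eta$ does not work in the whole-space setting of this theorem. The shifted density violates the far-field condition \eqref{eq:2.211}, and the quantities you must control, $(\rho_0+\eta)^{\frac{\gamma-1}{2}}$ and $(\rho_0+\eta)^{\frac{\delta-1}{2}}$, tend to the nonzero constants $\eta^{\frac{\gamma-1}{2}}$, $\eta^{\frac{\delta-1}{2}}$ at infinity and hence do not belong to $H^3(\mathbb{R}^3)$; the entire $H^3$ energy framework is then unavailable for the approximate solutions, and a cutoff repair would reintroduce a parameter in which uniformity must be re-proved. The paper avoids this by leaving the data (and hence $\varphi$, which may vanish) untouched and instead replacing the degenerate elliptic coefficient in the \emph{linearized} momentum equation by $\epsilon(\varphi^2+\eta^2)\mathbb{L}(W)$, as in \eqref{li4}: this makes the operator uniformly elliptic so that standard linear theory applies, while all a priori estimates are carried out with the weight $\sqrt{\varphi^2+\eta^2}$ and are uniform in $(\eta,\epsilon)$, after which one lets $\eta\to0$ at the linear level before running the Picard iteration. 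Your proof would be complete if you substitute this coefficient-level regularization for the density shift; the rest of your argument survives unchanged.
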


\begin{remark}\label{r1}
The new varible $\phi$ is actually the constant multiple of local sound speed  $c$ of the hydrodynamics equations:
$$
c=\sqrt{\frac{d}{d\rho}P(\rho)} \quad \big(=\sqrt{A\gamma}\rho^{\frac{\gamma-1}{2}}\ \ \text{for polytropic flows}\big).
$$
\end{remark}

\begin{remark}\label{r2}

Compared with \cite{sz333},   we  not only extend the viscosity power parameter $\delta$ to a broder region $1< \min\{\delta, \gamma\}\leq 3$, but also   establish a
more precise estimate that the life span of the regular solution
 has a uniformly positive
 lower bound with respect to $\epsilon$. 
 Actually, our desired a priori estimates  mainly come from the ``quasi-symmetric hyperbolic--``degenerate elliptic" coupled structure $(\ref{li46})_2$, and the details could be seen in Section 3. Moreover,  we point out that the regular solution obtained in the above theorem will break down in finite time, if  the initial data contain   ``isolated mass group" or  ``hyperbolic singularity set", which could be rigorously proved via the same arguments used  in \cite{sz333}.
\end{remark}

\begin{remark}\label{gengr2}

If we relax the  initial assumption from $\rho^{\frac{\gamma-1}{2}}_0 \in H^3$ to $\rho^{\gamma-1}_0 \in H^3$,  then the correspongding  local-in-time well-posedness for  quantities  $(\rho^{\gamma-1},\rho^{\frac{\delta-1}{2}},u)$   still can be obtained by the similar argument used to prove Theorem {\rm \ref{th2}}. However, for this case, the uniform positive lower bound of the life span and the uniform a priori estimates with respect to $\epsilon$ are not available, because the change of variable from $c$ to $c^2$ has directly destroyed the symmetric hyperbolic structure as shown in the left hand side of $(\ref{li46})_2$.
\end{remark}

Letting $\epsilon\rightarrow 0,$ the  solution obtained in Theorem \ref{th2} will strongly converge  to that of the compressible  Euler equations
(\ref{eq:1.1E}) in $C([0,T];H^{s'})$ for any $ s'\in[1,3)$. Meanwhile, we can also obtain the detail convergence rates, that is
\begin{theorem}[\text{\textbf{Inviscid Limit}}]\label{th3} Let (\ref{100001}) hold.  Suppose that $(\rho^\epsilon, u^\epsilon)$ is  the regular solution to
the Cauchy problem \eqref{eq:1.1}--\eqref{eq:2.211} obtained in Theorem $\ref{th2}$,  and $(\rho, u)$ is  the regular
solution to the Cauchy problem \eqref{eq:1.1E} with
\eqref{eq:2.211m}--\eqref{eq:2.211} obtained in Theorem $\ref{thmakio}$. If
\begin{equation}
(\rho^{\epsilon}, u^{\epsilon})|_{t=0}=(\rho, u)|_{t=0}=(\rho_0,
u_0)
\end{equation}
satisfies \eqref{th78}, then 
$\big(\rho^{\epsilon}, u^{\epsilon}\big)$
converges to $(\rho, u)$   as  $\epsilon\to 0$ in the following sense
\begin{equation}\label{shou1}
\lim_{\epsilon \to 0}\sup_{0\leq t\leq
T_*}\left(\Big\|\Big((\rho^{\epsilon})^{\frac{\gamma-1}{2}}-\rho^{\frac{\gamma-1}{2}}\Big)(t)\Big\|_{H^{s'}}+\big\|\big(u^\epsilon
-u\big)(t)\big\|_{H^{s'}}\right)=0,
\end{equation}
for any constant  $s'\in [0, 3)$.  Moreover, we also  have
\begin{equation}\label{decay}
\begin{split}
\sup_{0\leq t \leq T_*}\left(\Big\|\Big((\rho^{\epsilon})^{\frac{\gamma-1}{2}}-\rho^{\frac{\gamma-1}{2}}\Big)(t)\Big\|_1+\|\big(u^\epsilon -u\big)(t)\|_1\right)\leq& C\epsilon,\\
\sup_{0\leq t\leq
T_*}\left(\Big|\Big((\rho^{\epsilon})^{\frac{\gamma-1}{2}}-\rho^{\frac{\gamma-1}{2}}\Big)(t)\Big|_{D^2}+|\big(u^\epsilon
-u)(t)|_{D^2}\right)\leq& C\sqrt{\epsilon},
\end{split}
\end{equation}
where $C>0$ is a constant depending only on the fixed
 constants $A, \delta, \gamma, \alpha, \beta, T_*$ and $ \rho_0, u_0$.

Further more, if  the condition (\ref{regco}) holds, we still have
\begin{equation}\label{decay11}
\begin{split}
\lim_{\epsilon \to 0}\sup_{0\leq t\leq
T_*}\left(\big\|\big(\rho^{\epsilon}-\rho\big)(t)\big\|_{H^{s'}}+\big\|\big(u^\epsilon
-u\big)(t)\big\|_{H^{s'}}\right)=&0,\\
\sup_{0\leq t \leq T_*}\left(\big\|(\rho^{\epsilon}-\rho)(t)\big\|_1+\|\big(u^\epsilon -u\big)(t)\|_1\right)\leq& C\epsilon,\\
\sup_{0\leq t\leq
T_*}\left(\big|\big(\rho^{\epsilon}-\rho\big)(t)\big|_{D^2}+|\big(u^\epsilon
-u)(t)|_{D^2}\right)\leq& C\sqrt{\epsilon}.
\end{split}
\end{equation}

\end{theorem}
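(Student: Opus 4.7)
\textbf{Proof plan for Theorem \ref{th3}.}

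The natural strategy is to compare both solutions in the symmetric variables $W=(\phi,u)^\top$ with $\phi=\rho^{(\gamma-1)/2}$. Both $W^{\epsilon}=(\phi^{\epsilon},u^{\epsilon})^{\top}$ and $W=(\phi,u)^{\top}$ satisfy the quasi-symmetric hyperbolic system $(\ref{li46})_2$, the latter with vanishing right-hand side. Setting $\bar W=W^{\epsilon}-W$ and $\varphi^{\epsilon}=(\rho^{\epsilon})^{(\delta-1)/2}$, subtraction yields
\begin{equation*}
A_0\bar W_t+\sum_{j=1}^3 A_j(W^{\epsilon})\partial_j\bar W=-\sum_{j=1}^3\big(A_j(W^{\epsilon})-A_j(W)\big)\partial_j W+\epsilon\,\mathcal{R}^{\epsilon},
\end{equation*}
with viscous residual $\mathcal{R}^{\epsilon}:=-(\varphi^{\epsilon})^2\mathbb{L}(W^{\epsilon})+\mathbb{H}(\varphi^{\epsilon})\cdot\mathbb{Q}(W^{\epsilon})$ and initial data $\bar W|_{t=0}=0$. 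The coefficient difference satisfies $|A_j(W^{\epsilon})-A_j(W)|\leq C|\bar W|$, while \eqref{qiyu} and Theorem \ref{thmakio} make $\|W^{\epsilon}\|_3$, $\|W\|_3$ and $\|\varphi^{\epsilon}\|_2$ uniformly bounded on $[0,T_*]$.

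I would proceed in three stages. First, a standard $L^2$ energy estimate (contract with $\bar W$, integrate by parts using the symmetry of the $A_j$, bound the commutator by $C|\nabla W|_{\infty}|\bar W|_2^2$) combined with $|\mathcal{R}^{\epsilon}|_2\leq C$ gives $|\bar W(t)|_2\leq C\epsilon$ via Gronwall. Differentiating the system once and repeating the same scheme, the new commutators involve only norms $\|W\|_3,\|W^{\epsilon}\|_3,\|\varphi^{\epsilon}\|_2$ that are uniformly controlled, so one obtains $\|\bar W(t)\|_1\leq C\epsilon$, i.e.\ the first line of \eqref{decay}.

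The main obstacle is the $D^2$ estimate, because $\nabla^2(\epsilon(\varphi^{\epsilon})^2 Lu^{\epsilon})$ generates $\epsilon(\varphi^{\epsilon})^2\nabla^4 u^{\epsilon}$, which is \emph{not} uniformly bounded in $L^\infty_t L^2_x$; only the weighted quantity $\sqrt{\epsilon}\,\varphi^{\epsilon}\nabla^4 u^{\epsilon}$ is bounded in $L^2_t L^2_x$ by \eqref{qiyu}. To handle this I would pair the twice-differentiated difference equation with $\nabla^2\bar W$ and bound the dangerous contribution by
\begin{equation*}
\Big|\int_0^t\!\!\int\epsilon(\varphi^{\epsilon})^2\nabla^4 u^{\epsilon}\cdot\nabla^2\bar u\,dx\,ds\Big|\leq\sqrt{\epsilon}\,|\varphi^{\epsilon}|_{\infty}\Big(\int_0^t\!|\sqrt{\epsilon}\,\varphi^{\epsilon}\nabla^4 u^{\epsilon}|_2^2\,ds\Big)^{1/2}\!\Big(\int_0^t\!|\nabla^2\bar u|_2^2\,ds\Big)^{1/2},
\end{equation*}
and treat the $\nabla^3\varphi^{\epsilon}$ terms produced by $\nabla^2\mathbb{H}(\varphi^{\epsilon})$ by absorbing a $\sqrt{\epsilon}$ factor into the bound $\sqrt{\epsilon}\,|\varphi^{\epsilon}|_{D^3}\leq C$ from \eqref{qiyu}. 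A Gronwall argument then yields $|\bar W(t)|_{D^2}\leq C\sqrt{\epsilon}$, giving the second line of \eqref{decay}. Interpolating this $H^2$ bound against the uniform $H^3$ bound on $\bar W$ (available from \eqref{qiyu} and Theorem \ref{thmakio}) produces \eqref{shou1} for $s'\in[2,3)$; combined with the $H^1$ rate above this covers the whole range $s'\in[0,3)$.

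Finally, under hypothesis \eqref{regco}, the additional regularity \eqref{regco11} of $\rho^{\epsilon}$ together with the analogous property of $\rho$ makes the map $\phi\mapsto\rho$ a smooth composition on bounded sets (either the exponent $2/(\gamma-1)$ is an integer, or $\gamma$ lies in a range where standard $H^3$-composition estimates apply). Transferring \eqref{shou1}--\eqref{decay} through this map yields \eqref{decay11}. The single most delicate point I expect is the bookkeeping in the $D^2$ estimate: the two half-powers of $\epsilon$ inside $\epsilon(\varphi^{\epsilon})^2\nabla^4 u^{\epsilon}$ must be correctly distributed between the $L^\infty_t$-weight $|\varphi^{\epsilon}|_{\infty}$ and the $L^2_t$-quantity controlled by \eqref{qiyu} — which is precisely the role the ``quasi-symmetric hyperbolic''--``degenerate elliptic'' coupled structure was designed to play.
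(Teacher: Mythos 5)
Your proposal matches the paper's proof essentially step for step: the paper forms the same difference system for $\bar W^\epsilon=W^\epsilon-W$ in the symmetric variables, proves $|\bar W^\epsilon|_2\le C\epsilon$, $|\bar W^\epsilon|_{D^1}\le C\epsilon$ and $|\bar W^\epsilon|_{D^2}\le C\sqrt{\epsilon}$ by exactly the energy/commutator/Gronwall scheme you describe, handles the $\epsilon(\varphi^\epsilon)^2\nabla^4u^\epsilon$ term by the same redistribution of $\sqrt{\epsilon}$ onto the $L^2_tL^2_x$-controlled quantity from \eqref{qiyu}, and concludes \eqref{shou1} by interpolation (the paper interpolates $L^2$ against $H^3$ rather than $H^2$ against $H^3$, an immaterial difference). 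The treatment of \eqref{decay11} via the composition $\rho=\phi^{2/(\gamma-1)}$ under \eqref{regco} is likewise the paper's argument.
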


\begin{remark}\label{zhunbei1}
It should be pointed out that conclusions of  Theorems {\rm \ref{th2}-\ref{th3} } still hold when viscosities $\mu$ and $\lambda$ are in the general form:
\begin{equation}\label{nianxing}\mu(\rho)= \epsilon \rho^\delta \alpha(\rho),\quad \lambda(\rho)= \epsilon\rho^\delta \beta(\rho),
\end{equation}
where 
$\alpha(\rho)$ and $\beta(\rho)$ are functions of $\rho$, satisfying
 \begin{equation}\label{zhunbei2}
  \big(\alpha(\rho),\beta(\rho)\big)\in C^4(\mathbb{R}^+),\;\;  \alpha(\rho)\geq C_0>0, \ \ \text{and} \ \  2\alpha(\rho)+3\beta(\rho)\geq 0.
 \end{equation}
For    function pairs $(g_1(\rho),g_2(\rho))$ have the form $\sim \rho^p$, it is obviously that they do not belong to $C^4(\mathbb{R}^+)$ when $p<4$ due to  the presence of the vacuum. However, Theorems {\rm \ref{th2}-\ref{th3}} still hold if the initial data satisfy the  additional initial assumptions:
\begin{equation}\label{yanshen}
\begin{cases}
\alpha(\rho)=P_{k_1}(\rho)+1,\quad \beta(\rho)=P_{k_2}(\rho),\\[10pt]
P_{k_1}(\rho_0)\in H^3,\quad P_{k_2}(\rho_0)\in H^3,
\end{cases}
\end{equation}
where $P_{k_i}(\rho)$ $(i=1,2)$ is a $k_i$-th degree  polynomial of $\rho$ with vanishing constant term, and the minimum power of density in all the terms of  $P_{k_i}(\rho)$  should  be greater than or  equal to  $\frac{\delta-1}{2}$.
Further more, our framework  in this paper is applicable to other physical dimensions, say 1 and 2, after  some minor modifications. 
\end{remark}

The rest of the paper is organized as follows: In Section 2,  we list some basic lemmas
that will be used  in our proof.  In Section 3,  based on some uniform  estimates for 
$$\displaystyle\big(\rho^{\frac{\gamma-1}{2}}, u\big) \quad \text{in} \quad H^3, \quad \text{and} \quad \rho^{\frac{\delta-1}{2}} \quad \text{in} \quad H^2,$$
 we will give the proof for  the  uniform
(with respect to $\epsilon$) local-in-time well-posedness of the strong  solution to  the reformulated Cauchy problem (\ref{li41}), which is achieved in the following  four steps: 
  \begin{enumerate}
  
\item Via introducing a  uniform elliptic  operator $\epsilon (\varphi^2+\eta^2)Lu$ with artificial viscosity coefficients $\eta^2>0$ in  momentum equations, the  global well-posedness of the approximation solution to the corresponding   linearized problem  (\ref{li4}) for $(\varphi,\phi,u)$ 
 has been established (Section 3.1).\\
 
\item  We establish the uniform a priori estimates  with respect to  $(\eta, \epsilon)$ for 
$$\displaystyle\big(\phi, u\big) \quad \text{in} \quad H^3, \quad \text{and} \quad \varphi \quad \text{in} \quad H^2,$$
 to the  linearized problem  (\ref{li4})   in $[0,T_*]$, where the time $T_*$ is also  independent of  $(\eta, \epsilon)$ (Section 3.2).\\
\item Via passing  to the limit as $\eta\rightarrow 0$, we obtain the solution of the linearized problem \eqref{li4*},  which allows that the elliptic operator appearing in the reformulated momentum equations is degenerate  (Section 3.3).\\
\item  Based on the uniform analysis for the linearized problem, we prove the uniform
(with respect to $\epsilon$) local-in-time well-posedness of the non-linear reformulated problem through  the Picard iteration approach (Section 3.4).
  \end{enumerate}

According to the uniform  local-in-time well-posedness and a priori estimates (with respect to $\epsilon$)  to non-linearized problem (\ref{li41}) obtained in Section 3, in Section 4, we will give the proof for Theorem  \ref{th2}.  Finally in Section 5, the convergence
rates from  the viscous flow to inviscid flow will be obtained, which is the proof of Theorem  \ref{th3}.

\section{Preliminaries}

In this section, we show some basic lemmas  that will be frequently used in the following  proof.
The first one is the  well-known Gagliardo-Nirenberg inequality.
\begin{lemma}\cite{oar}\label{lem2as}\
For $p\in [2,6]$, $q\in (1,\infty)$, and $r\in (3,\infty)$, there exists some generic constant $C> 0$ that may depend on $q$ and $r$ such that for
$$f\in H^1(\mathbb{R}^3),\quad \text{and} \quad  g\in L^q(\mathbb{R}^3)\cap D^{1,r}(\mathbb{R}^3),$$
 we have
\begin{equation}\label{33}
\begin{split}
&|f|^p_p \leq C |f|^{(6-p)/2}_2 |\nabla f|^{(3p-6)/2}_2,\\[8pt]
&|g|_\infty\leq C |g|^{q(r-3)/(3r+q(r-3))}_q |\nabla g|^{3r/(3r+q(r-3))}_r.
\end{split}
\end{equation}
\end{lemma}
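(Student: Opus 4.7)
The stated inequalities are the classical Gagliardo–Nirenberg interpolation bounds in $\mathbb{R}^3$, so the plan is to derive them by combining the two standard ingredients: a Sobolev embedding that yields a base endpoint bound, together with either $L^p$–interpolation or a scaling argument that pins down the exponents. I would treat the two inequalities separately, since their endpoint structure is different: the first is a bulk $L^p$ bound that dies at the Sobolev endpoint $p=6$, while the second is an $L^\infty$ bound that requires $r>3$ so that $W^{1,r}$ embeds into $C^0$.

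For the first inequality my plan is to interpolate the $L^p$-norm between $L^2$ and $L^6$. Writing $\tfrac{1}{p}=\tfrac{1-\theta}{2}+\tfrac{\theta}{6}$ forces $\theta=\tfrac{3(p-2)}{2p}\in[0,1]$ exactly when $p\in[2,6]$, and the usual log-convexity of $L^p$-norms gives
\begin{equation*}
|f|_p\le |f|_2^{1-\theta}|f|_6^{\theta}.
\end{equation*}
Then I would invoke the Sobolev embedding $H^1(\mathbb{R}^3)\hookrightarrow L^6(\mathbb{R}^3)$, which on the homogeneous level reads $|f|_6\le C|\nabla f|_2$, and raise the resulting estimate to the $p$-th power. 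A straightforward arithmetic check of the exponents $p(1-\theta)=(6-p)/2$ and $p\theta=(3p-6)/2$ delivers the stated bound.

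For the second inequality I would argue by a dimensional (scaling) reduction. Since $r>3$, Morrey's embedding $D^{1,r}(\mathbb{R}^3)\hookrightarrow C^{0,1-3/r}(\mathbb{R}^3)$ provides an initial $L^\infty$-control of $g$ in terms of $|g|_q$ and $|\nabla g|_r$, but not with the sharp exponents. To fix the exponents one applies the inequality to the rescaled function $g_\lambda(x)=g(\lambda x)$ for each $\lambda>0$: since $|g_\lambda|_\infty=|g|_\infty$, while $|g_\lambda|_q=\lambda^{-3/q}|g|_q$ and $|\nabla g_\lambda|_r=\lambda^{1-3/r}|\nabla g|_r$, requiring the right-hand side of an Ansatz $|g|_\infty\le C|g|_q^{1-\theta}|\nabla g|_r^\theta$ to be independent of $\lambda$ enforces $\tfrac{3(1-\theta)}{q}=\theta\tfrac{r-3}{r}$, i.e.\ $\theta=\tfrac{3r}{3r+q(r-3)}$ and $1-\theta=\tfrac{q(r-3)}{3r+q(r-3)}$, which matches the exponents in \eqref{33}. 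Minimizing over $\lambda$ in the scale-invariant combination then converts the Morrey bound into the claimed sharp form.

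The main obstacle is purely bookkeeping: verifying that the admissibility assumptions $p\in[2,6]$, $q\in(1,\infty)$, and $r\in(3,\infty)$ produce interpolation parameters lying in $[0,1]$, and that the resulting exponents are dimensionally consistent. No essential difficulty arises for functions in the required Sobolev/homogeneous Sobolev classes, since approximation by $C_c^\infty(\mathbb{R}^3)$ functions—standard for $H^1$ and for $L^q\cap D^{1,r}$—allows one to reduce both inequalities to the smooth compactly supported setting where the Sobolev/Morrey embeddings are classical.
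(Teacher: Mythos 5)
Your proof is correct. The paper does not actually prove this lemma --- it is stated as a citation of the classical Gagliardo--Nirenberg inequalities (from Ladyzhenskaya--Ural'tseva) with no argument given --- so there is no ``paper's approach'' to compare against; your derivation is the standard one. The first part ($L^p$ log-convexity between $L^2$ and $L^6$ plus the Sobolev embedding $|f|_6\le C|\nabla f|_2$, then raising to the power $p$) checks out, with the exponent arithmetic $p\theta=(3p-6)/2$ and $p(1-\theta)=(6-p)/2$ correct and $\theta\in[0,1]$ exactly for $p\in[2,6]$. For the second part, the one point to phrase carefully is that the homogeneous space $D^{1,r}$ alone does not control $|g|_\infty$ (constants lie in its kernel); what you need is the inhomogeneous bound $|g|_\infty\le C(|g|_q+|\nabla g|_r)$, obtained from the Morrey oscillation estimate on unit balls together with $\fint_B|g|\le C|g|_q$, after which applying it to $g_\lambda$ and optimizing over $\lambda$ (possible since $1-3/r+3/q>0$) yields the product form with exactly the exponents you computed. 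Since you do invoke both norms and the balancing step, the argument is complete.
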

Some special  versions of this inequality can be written as
\begin{equation}\label{ine}\begin{split}
|u|_6\leq C|u|_{D^1},\quad |u|_{\infty}\leq C\|\nabla u\|_{1}, \quad |u|_{\infty}\leq C\|u\|_{W^{1,r}},\quad \text{for} \quad r>3.
\end{split}
\end{equation}

The second one  can be found in Majda \cite{amj}. Here we omit its proof.

\begin{lemma}\cite{amj}\label{zhen1}
Let constants $r$, $a$ and $b$ satisfy the relation
$$\frac{1}{r}=\frac{1}{a}+\frac{1}{b},\quad \text{and} \quad 1\leq a,\ b, \ r\leq \infty.$$  $ \forall s\geq 1$, if $f, g \in W^{s,a} \cap  W^{s,b}(\mathbb{R}^3)$, then we have
\begin{equation}\begin{split}\label{ku11}
&|\nabla^s(fg)-f \nabla^s g|_r\leq C_s\big(|\nabla f|_a |\nabla^{s-1}g|_b+|\nabla^s f|_b|g|_a\big),
\end{split}
\end{equation}
\begin{equation}\begin{split}\label{ku22}
&|\nabla^s(fg)-f \nabla^s g|_r\leq C_s\big(|\nabla f|_a |\nabla^{s-1}g|_b+|\nabla^s f|_a|g|_b\big),
\end{split}
\end{equation}
where $C_s> 0$ is a constant only depending on $s$,  and $\nabla^s f$ ($s>1$) is the set of  all $\partial^\zeta_x f$  with $|\zeta|=s$. Here $\zeta=(\zeta_1,\zeta_2,\zeta_3)\in \mathbb{R}^3$ is a multi-index.
\end{lemma}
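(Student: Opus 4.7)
The plan is to reduce both estimates to the Leibniz rule followed by Hölder's inequality and Gagliardo--Nirenberg interpolation. Expanding
\begin{equation*}
\nabla^s(fg) - f\,\nabla^s g = \sum_{1\leq |\alpha|\leq s} C_\alpha\, \partial^\alpha f\,\partial^{s-|\alpha|} g,
\end{equation*}
the top-order contribution $f\nabla^s g$ cancels, so in every remaining summand $f$ carries at least one derivative and $g$ carries at most $s-1$ derivatives. It is therefore enough to bound each summand $\partial^\alpha f\,\partial^{s-|\alpha|} g$ in $L^r$ by the corresponding right-hand side and sum in $\alpha$.

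For \eqref{ku11}, fix $j = |\alpha| \in \{1,\dots,s\}$ and set $\theta_j = (j-1)/(s-1)$ (the case $s=1$ is trivial, the sum containing only $j=1$). I would apply Hölder with exponents $p_j, q_j$ defined by
\begin{equation*}
\frac{1}{p_j} = \frac{1-\theta_j}{a} + \frac{\theta_j}{b},\qquad \frac{1}{q_j} = \frac{\theta_j}{a} + \frac{1-\theta_j}{b},
\end{equation*}
so that $1/p_j + 1/q_j = 1/a + 1/b = 1/r$. One checks directly the scale-invariance identities $j = (1-\theta_j)\cdot 1 + \theta_j\cdot s$ for $f$ and $s-j = \theta_j\cdot 0 + (1-\theta_j)(s-1)$ for $g$, which together with the $L^p$ scaling allow Gagliardo--Nirenberg to yield
\begin{equation*}
|\nabla^j f|_{p_j} \leq C|\nabla f|_a^{1-\theta_j}|\nabla^s f|_b^{\theta_j},\qquad |\nabla^{s-j} g|_{q_j} \leq C|g|_a^{\theta_j}|\nabla^{s-1} g|_b^{1-\theta_j}.
\end{equation*}
Multiplying and regrouping,
\begin{equation*}
|\partial^\alpha f\,\partial^{s-j} g|_r \leq C\bigl(|\nabla f|_a|\nabla^{s-1}g|_b\bigr)^{1-\theta_j}\bigl(|\nabla^s f|_b|g|_a\bigr)^{\theta_j},
\end{equation*}
and the elementary bound $X^{1-\theta}Y^\theta \leq X + Y$ (Young's inequality) closes the summand; summing in $\alpha$ gives \eqref{ku11}.

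Inequality \eqref{ku22} follows by essentially the same argument but with a different Hölder split: I would use $|\partial^\alpha f\,\partial^{s-|\alpha|} g|_r \leq |\nabla^j f|_a |\nabla^{s-j} g|_b$, so that all derivatives of $f$ are measured in $L^a$ and all derivatives of $g$ in $L^b$. The Gagliardo--Nirenberg interpolations with coincident endpoint integrability,
\begin{equation*}
|\nabla^j f|_a \leq C|\nabla f|_a^{1-\theta_j}|\nabla^s f|_a^{\theta_j},\qquad |\nabla^{s-j} g|_b \leq C|g|_b^{\theta_j}|\nabla^{s-1} g|_b^{1-\theta_j},
\end{equation*}
combined with the same Young step, produce the alternative upper bound $|\nabla f|_a|\nabla^{s-1}g|_b + |\nabla^s f|_a|g|_b$.

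The main technical obstacle is purely the bookkeeping of exponents: one must verify that each $p_j, q_j \in [1,\infty]$, that $\theta_j \in [0,1]$, and that the Gagliardo--Nirenberg scale-invariance relation $3/p - k = \theta(3/b - s) + (1-\theta)(3/a - 1)$ is satisfied for the intermediate derivative order and integrability. The endpoints $j=1$ and $j=s$ reproduce the two target terms directly, so interpolation is only needed for $1 < j < s$; borderline cases where $a$ or $b$ equals $1$ or $\infty$ reduce to Hölder and Sobolev embeddings with the same final bound.
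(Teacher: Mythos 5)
The paper does not prove this lemma: it cites Majda \cite{amj} and explicitly omits the argument, so there is no in-paper proof to compare against. Your proposal is the standard proof of this Moser-type calculus inequality (Leibniz expansion, Hölder with the split exponents $p_j,q_j$, Gagliardo--Nirenberg interpolation between the derivative endpoints, then Young's inequality $X^{1-\theta}Y^{\theta}\leq X+Y$), and the exponent bookkeeping you set up --- $\theta_j=(j-1)/(s-1)$, the scale-invariance identities, and $1/p_j+1/q_j=1/r$ --- all check out; the endpoints $j=1$ and $j=s$ indeed reproduce the two terms on the right-hand side directly. The only point to treat with care, which you already flag, is that the two-exponent Gagliardo--Nirenberg inequality has exceptional borderline cases when $a$ or $b$ equals $1$ or $\infty$; since $\theta_j\in(0,1)$ strictly for the intermediate orders, these do not obstruct the argument, and your proof is correct as it stands.
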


The third one will show some compactness results  from  the Aubin-Lions Lemma.
\begin{lemma}\cite{jm}\label{aubin} Let $X_0$, $X$ and $X_1$ be three Banach spaces with $X_0\subset X\subset X_1$. Suppose that $X_0$ is compactly embedded in $X$ and that $X$ is continuously embedded in $X_1$.\\[1pt]

I) Let $G$ be bounded in $L^p(0,T;X_0)$ where $1\leq p < \infty$, and $\frac{\partial G}{\partial t}$ be bounded in $L^1(0,T;X_1)$, then $G$ is relatively compact in $L^p(0,T;X)$.\\[1pt]

II) Let $F$ be bounded in $L^\infty(0,T;X_0)$  and $\frac{\partial F}{\partial t}$ be bounded in $L^p(0,T;X_1)$ with $p>1$, then $F$ is relatively compact in $C(0,T;X)$.
\end{lemma}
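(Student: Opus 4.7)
The plan is to establish both parts of the Aubin--Lions lemma via the classical route: combine the compact embedding $X_0 \hookrightarrow\hookrightarrow X$ with an Ehrling--type interpolation inequality to transfer a uniform bound on the time derivative (which lives in the weaker space $X_1$) into equicontinuity in the intermediate space $X$. The key preparatory ingredient I would state as a sublemma is Ehrling's inequality: since $X_0 \hookrightarrow\hookrightarrow X \hookrightarrow X_1$, for every $\eta>0$ there exists $C_\eta>0$ such that
\begin{equation*}
\|v\|_X \leq \eta \|v\|_{X_0} + C_\eta \|v\|_{X_1} \quad \text{for all } v\in X_0,
\end{equation*}
which is proved by contradiction using compactness of the unit ball of $X_0$ in $X$.

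For Part I, I would verify the Fr\'echet--Kolmogorov--Riesz criterion for relative compactness in $L^p(0,T;X)$, which requires: uniform boundedness in $L^p(0,T;X)$, vanishing translations $\sup_{G}\|G(\cdot+h)-G(\cdot)\|_{L^p(0,T-h;X)}\to 0$ as $h\to 0$, and a uniform tail bound near the endpoints of $(0,T)$. The boundedness follows at once from the uniform bound in $L^p(0,T;X_0)$ and the continuous embedding $X_0\hookrightarrow X$. For the translation estimate, I would apply Ehrling with a small $\eta$: the $X_0$--piece of $\|G(t+h)-G(t)\|_X$ is controlled by $2\eta$ times the fixed $L^p(0,T;X_0)$ bound, while the $X_1$--piece is handled by writing $G(t+h)-G(t)=\int_t^{t+h}\partial_s G\,ds$ and using Minkowski together with the $L^1(0,T;X_1)$ bound on $\partial_t G$, which yields a factor vanishing with $h$. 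Choosing $\eta$ arbitrarily small gives the desired uniform convergence.

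For Part II, the target space is $C([0,T];X)$, so the Arzel\`a--Ascoli theorem is the natural tool: one needs pointwise relative compactness in $X$ and equicontinuity in $t\in[0,T]$. Pointwise relative compactness is immediate from the uniform $L^\infty(0,T;X_0)$ bound together with the compact embedding $X_0\hookrightarrow\hookrightarrow X$. For equicontinuity I would again split via Ehrling: the $X_0$--term is $\le 2\eta\sup_t\|F(t)\|_{X_0}$ (uniformly in $t$ and in $F$), and the $X_1$--term is estimated through $\|F(t+h)-F(t)\|_{X_1}\le \int_t^{t+h}\|\partial_s F\|_{X_1}\,ds\le h^{1-1/p}\|\partial_t F\|_{L^p(0,T;X_1)}$, where the exponent $1-1/p>0$ is crucially positive thanks to the hypothesis $p>1$. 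This term vanishes uniformly as $h\to 0$, and Ascoli--Arzel\`a concludes.

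The genuinely delicate point, and the only step requiring care, is the uniformity of the translation/equicontinuity estimate across the entire family: one must ensure that the constants in Ehrling's inequality are independent of the element of the family, which is clear because Ehrling is a statement about the abstract spaces; and one must justify the fundamental theorem of calculus representation $F(t+h)-F(t)=\int_t^{t+h}\partial_s F\,ds$ in $X_1$ for merely weakly differentiable $F$, which follows from standard density arguments within Bochner spaces. The distinction between Part I (where $\partial_t G\in L^1(0,T;X_1)$ suffices) and Part II (where $p>1$ is strictly required in order to gain the H\"older factor $h^{1-1/p}$) should be emphasized, since it is precisely this integrability threshold that allows the jump from $L^p$ compactness to uniform compactness on $[0,T]$.
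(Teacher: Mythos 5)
The paper offers no proof of this lemma; it is quoted directly from Simon \cite{jm}, so the only argument to compare yours against is the classical one, which is exactly the route you take (Ehrling interpolation, Kolmogorov--Riesz for Part I, Arzel\`a--Ascoli for Part II). Your outline is essentially correct, but two steps need repair before it closes. First, the compactness criterion you invoke for $L^p(0,T;X)$ is stated as if $X$ were finite-dimensional: uniform boundedness plus vanishing translations plus a tail condition does \emph{not} characterize relative compactness in a Bochner space (take $G_n\equiv e_n$ for an orthonormal sequence $e_n$: translations vanish identically, yet the family is not compact). The correct criterion (Simon's Theorem 1) additionally requires that the averaged sets $\bigl\{\int_{t_1}^{t_2}G\,dt\bigr\}$ be relatively compact in $X$ for $0<t_1<t_2<T$; in your setting this is immediate, since these averages are bounded in $X_0$ and $X_0$ embeds compactly into $X$, but it must be verified explicitly rather than replaced by mere boundedness. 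Second, in Part I the estimate of $\|G(\cdot+h)-G(\cdot)\|_{L^p(0,T-h;X_1)}$ cannot be obtained from Minkowski's integral inequality, which would require $\partial_tG\in L^p(0,T;X_1)$ rather than $L^1$. Instead bound $\bigl(\int_t^{t+h}\|\partial_sG\|_{X_1}\,ds\bigr)^p\le \|\partial_tG\|_{L^1(0,T;X_1)}^{p-1}\int_t^{t+h}\|\partial_sG\|_{X_1}\,ds$ and integrate in $t$ via Fubini, which yields $\|G(\cdot+h)-G(\cdot)\|_{L^p(0,T-h;X_1)}\le h^{1/p}\|\partial_tG\|_{L^1(0,T;X_1)}$ --- the uniform smallness you need, after which the Ehrling splitting works as you describe. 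With these two corrections Part I is complete, and your Part II argument (pointwise relative compactness from the $L^\infty(0,T;X_0)$ bound, equicontinuity from Ehrling together with the H\"older factor $h^{1-1/p}$, which is precisely where $p>1$ enters) is sound.
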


The following lemma will be used to show the time continuity for the higher order terms of our solution.
\begin{lemma}\cite{bjr}\label{1}
If $f(t,x)\in L^2([0,T]; L^2)$, then there exists a sequence $s_k$ such that
$$
s_k\rightarrow 0, \quad \text{and}\quad s_k |f(s_k,x)|^2_2\rightarrow 0, \quad \text{as} \quad k\rightarrow+\infty.
$$
\end{lemma}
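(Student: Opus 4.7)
The plan is to reduce the claim to a scalar statement about $g(t) := |f(t,\cdot)|_2^2$. By Tonelli's theorem, the hypothesis $f \in L^2([0,T]; L^2(\mathbb{R}^3))$ is equivalent to $g \in L^1(0,T)$ with $\int_0^T g(t)\,dt = \|f\|_{L^2([0,T];L^2)}^2 < \infty$. Setting $h(t) := t\,g(t)$ on $(0,T]$, it suffices to show that $\operatorname{ess\,liminf}_{t\to 0^+} h(t) = 0$, because any sequence $s_k \to 0^+$ extracted along the essential lower limit will furnish the required $s_k$ with $s_k |f(s_k,\cdot)|_2^2 \to 0$.

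The heart of the argument is a one-line contradiction. Suppose instead that $\operatorname{ess\,liminf}_{t\to 0^+} h(t) \geq 2\epsilon_0$ for some $\epsilon_0 > 0$; then there exists $\delta_0 \in (0,T]$ such that $h(t) \geq \epsilon_0$ for almost every $t \in (0,\delta_0)$, i.e.\ $g(t) \geq \epsilon_0/t$ a.e.\ on $(0,\delta_0)$. Integrating yields
\[
\int_0^{\delta_0} g(t)\,dt \;\geq\; \epsilon_0\int_0^{\delta_0}\frac{dt}{t} \;=\; +\infty,
\]
which contradicts $g \in L^1(0,T)$. Hence $\operatorname{ess\,liminf}_{t\to 0^+}h(t)=0$.

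To produce the sequence explicitly, for each $k \in \mathbb{N}$ I would consider the measurable set $E_k := \{t \in (0,1/k) : h(t) < 1/k\}$, which by the previous step has strictly positive Lebesgue measure, and then pick any $s_k \in E_k$. This gives $s_k \to 0$ and $s_k\,|f(s_k,\cdot)|_2^2 = h(s_k) < 1/k \to 0$, as required. The only delicate point — and it is really the only obstacle worth flagging — is the measure-theoretic bookkeeping: since $g$ is only defined almost everywhere, one must fix a Borel representative of $t \mapsto |f(t,\cdot)|_2^2$ and verify that the chosen $s_k$ lies in the full-measure set where this representative agrees with the actual $L^2$ norm of $f(s_k,\cdot)$. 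This is standard once a representative of $f$ is fixed, so the lemma has no serious analytic content beyond the divergence of $\int_0 dt/t$.
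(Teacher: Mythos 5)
Your argument is correct. Note that the paper does not prove this lemma at all: it is quoted from Boldrini--Rojas-Medar--Fern\'andez-Cara \cite{bjr} and used as a black box, so there is no in-paper proof to compare against. Your reduction to $g(t)=|f(t,\cdot)|_2^2\in L^1(0,T)$ and the contradiction via $\int_0^{\delta_0}\epsilon_0\,t^{-1}\,dt=+\infty$ is the standard route (an equivalent packaging is the mean-value form $\operatorname{ess\,inf}_{(\delta/2,\delta)}\,t\,g(t)\le 2\int_{\delta/2}^{\delta}g(t)\,dt\to 0$, which produces the sequence directly without arguing by contradiction), and your remark about fixing a representative of $f$ is the right caveat, since the lemma is only meaningful for almost every choice of the $s_k$.
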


Next we give some Sobolev inequalities on the interpolation estimate, product estimate,  composite function estimate and so on in the following three lemmas.
\begin{lemma}\cite{amj}\label{gag111}
Let  $u\in H^s$, then for any $s'\in[0,s]$,  there exists  a constant $C_s$ only depending on $s$ such that
$$
\|u\|_{s'} \leq C_s \|u\|^{1-\frac{s'}{s}}_0 \|u\|^{\frac{s'}{s}}_s.
$$
\end{lemma}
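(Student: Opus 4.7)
The plan is to reduce the statement to a one-line application of Hölder's inequality on the frequency side. Via Plancherel, the inhomogeneous Sobolev norm admits the representation
$$
\|u\|_{\sigma}^{2}=\int_{\mathbb{R}^{3}}(1+|\xi|^{2})^{\sigma}|\hat{u}(\xi)|^{2}\,d\xi \qquad (\sigma\ge 0),
$$
which is finite for every $\sigma\in[0,s]$ as soon as $u\in H^{s}$. Thus the whole problem is to compare integrals of the non-negative weight $(1+|\xi|^{2})^{\sigma}|\hat{u}|^{2}$ for different values of $\sigma$.

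First I would dispose of the endpoint cases $s'=0$ and $s'=s$, where the inequality is trivial with $C_{s}=1$. For $s'\in(0,s)$ I would factor the integrand of $\|u\|_{s'}^{2}$ as
$$
(1+|\xi|^{2})^{s'}|\hat{u}(\xi)|^{2}
=\bigl[(1+|\xi|^{2})^{s}|\hat{u}(\xi)|^{2}\bigr]^{s'/s}\cdot\bigl[|\hat{u}(\xi)|^{2}\bigr]^{\,1-s'/s},
$$
and then apply Hölder's inequality in $\xi$ with the conjugate exponents $p=s/s'$ and $q=s/(s-s')$, both of which are admissible since $0<s'<s$. This yields
$$
\|u\|_{s'}^{2}\le\Bigl(\int(1+|\xi|^{2})^{s}|\hat{u}|^{2}\,d\xi\Bigr)^{s'/s}\Bigl(\int|\hat{u}|^{2}\,d\xi\Bigr)^{1-s'/s}=\|u\|_{s}^{2s'/s}\|u\|_{0}^{2(1-s'/s)},
$$
and taking square roots gives exactly the stated bound, in fact with $C_{s}=1$. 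If the paper's $H^{s}$-norm is defined with a different but equivalent weight (e.g.\ $(1+|\xi|)^{2s}$), the same argument produces a constant depending only on $s$, which accounts for the $C_{s}$ in the statement.

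There is no real obstacle here: the only check worth making is that the two Hölder exponents are both $\ge 1$ and that the factors $[(1+|\xi|^{2})^{s}|\hat{u}|^{2}]^{s'/s}\in L^{s/s'}(d\xi)$ and $[|\hat{u}|^{2}]^{1-s'/s}\in L^{s/(s-s')}(d\xi)$ have the finite norms $\|u\|_{s}^{2s'/s}$ and $\|u\|_{0}^{2(1-s'/s)}$ respectively, which follows immediately from $u\in H^{s}\hookrightarrow L^{2}$.
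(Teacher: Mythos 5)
Your proof is correct and is precisely the standard Plancherel--Hölder interpolation argument; the paper itself gives no proof, citing Majda's book, and the argument there is essentially the one you wrote (with $C_s$ absorbing any discrepancy between equivalent definitions of the $H^s$-norm, exactly as you note). Nothing to add.
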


\begin{lemma}\cite{amj}\label{gag113}
Let  functions $u,\ v \in H^s$ and $s>\frac{3}{2}$, then  $u\cdot v \in H^s$,  and  there exists  a constant $C_s$ only depending on $s$ such that
$$
\|uv\|_{s} \leq C_s \|u\|_s \|v\|_s.
$$
\end{lemma}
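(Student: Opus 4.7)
The plan is to prove this multiplication estimate by a Moser-type argument, treating the integer case directly via Leibniz and Gagliardo--Nirenberg interpolation, and then extending to fractional $s$ by Fourier/Littlewood--Paley methods. First I would handle integer $s\ge 2$. Writing $\nabla^s(uv)$ as a sum via Leibniz of terms $\nabla^k u\,\nabla^{s-k}v$ for $0\le k\le s$, I would control each of them in $L^2$ by Hölder with the conjugate pair $(2s/k,\,2s/(s-k))$, giving a product of $L^{2s/k}$- and $L^{2s/(s-k)}$-norms (with the convention that the extreme exponents are replaced by $L^\infty$). Each of these is then interpolated by Lemma \ref{lem2as} (Gagliardo--Nirenberg) between $L^\infty$ and $H^s$, yielding
\[
|\nabla^k u|_{2s/k}\le C\,|u|_\infty^{1-k/s}\,\|u\|_s^{k/s},\qquad |\nabla^{s-k}v|_{2s/(s-k)}\le C\,|v|_\infty^{k/s}\,\|v\|_s^{(s-k)/s}.
\]
Multiplying and using the Sobolev embedding $H^s(\mathbb{R}^3)\hookrightarrow L^\infty$ for $s>3/2$ (which is exactly where the threshold enters) to bound $|u|_\infty+|v|_\infty\le C\|u\|_s+C\|v\|_s$, the exponents of $\|u\|_s$ and $\|v\|_s$ add up to one each, so every Leibniz term is $\le C_s\|u\|_s\|v\|_s$. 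The low-frequency piece $|uv|_2\le|u|_\infty|v|_2$ is handled by the same embedding, and summing delivers the claim for integer $s$.

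For non-integer $s>3/2$, I would switch to the Fourier-side definition $\|f\|_s^2=\int(1+|\xi|^2)^s|\hat f(\xi)|^2\,d\xi$ and combine it with Peetre's inequality
\[
(1+|\xi|^2)^{s/2}\le C_s\bigl((1+|\xi-\eta|^2)^{s/2}+(1+|\eta|^2)^{s/2}\bigr).
\]
Inserting this into $\widehat{uv}=\hat u*\hat v$ splits $(1+|\xi|^2)^{s/2}\widehat{uv}$ into two convolutions, each of which, after Minkowski/Young, reduces to estimating a product where one factor is in $H^s$ and the other in $L^\infty$; the $L^\infty$ factor is again absorbed by $H^s\hookrightarrow L^\infty$ for $s>3/2$. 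Equivalently, one can run Bony's paraproduct decomposition $uv=T_uv+T_vu+R(u,v)$ and estimate the three pieces separately via standard Littlewood--Paley bounds, which give continuity of each paraproduct from $L^\infty\times H^s$ (or $H^s\times H^s$ for the remainder) into $H^s$.

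The only genuine obstacle is the fractional case, and specifically the critical role of the threshold $s>3/2$: it is what makes $H^s$ a Banach algebra by providing the $L^\infty$ control needed to close every interpolation or paraproduct estimate. The integer case is essentially a routine Moser calculation once one chooses the Hölder exponents $(2s/k,2s/(s-k))$ correctly. I would finally observe that the constant $C_s$ produced depends only on $s$ (and the dimension $3$, which is fixed), as required.
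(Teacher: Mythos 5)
Your proposal is correct. Note first that the paper itself gives no proof of this lemma: it is quoted verbatim from Majda \cite{amj} as a standard calculus inequality, so there is no in-paper argument to compare against. Your two-track proof is exactly the classical one. The fractional-$s$ argument via Peetre's inequality and Young's convolution inequality is essentially the proof in Majda's book; the key quantitative point, which you correctly isolate, is that $\|\hat v\|_{L^1}\leq \big|(1+|\cdot|^2)^{-s/2}\big|_2\,\|v\|_s\leq C_s\|v\|_s$ precisely because $2s>3$, which is the Fourier-side incarnation of $H^s(\mathbb{R}^3)\hookrightarrow L^\infty$. The integer-$s$ Moser computation with the Hölder pair $(2s/k,\,2s/(s-k))$ and the exponent bookkeeping $\;(1-k/s)+k/s=1$ is also right. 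One small caveat: the interpolation inequality $|\nabla^k u|_{2s/k}\leq C|u|_\infty^{1-k/s}|\nabla^s u|_2^{k/s}$ that you attribute to Lemma \ref{lem2as} is a member of the Gagliardo--Nirenberg family that goes beyond what that lemma literally states (it only records the cases $p\in[2,6]$ and one $L^\infty$ bound), so you should either cite the general Gagliardo--Nirenberg inequality or simply run the Fourier/paraproduct argument for all $s>3/2$ at once, which makes the integer case redundant. This does not affect the validity of the proof.
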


%Some Moser-type calculus inequalities will be given as follows:
\begin{lemma}\cite{amj}
\begin{enumerate}
\item For functions $f,\ g \in H^s \cap L^\infty$ and $|\nu|\leq s$,   there exists  a constant $C_s$ only depending on $s$ such that
\begin{equation}\label{liu01}
\begin{split}
\|\nabla^\nu (fg)\|_s\leq C_s(|f|_{\infty}|\nabla^s g|_2+|g|_{\infty}|\nabla^s f|_{2}).
\end{split}
\end{equation}

\item Assume that  $g(u)$ is a smooth vector-valued function on $G$, $u(x)$ is a continuous function with $u\in H^s \cap L^\infty$. Then for $s\geq 1$,   there exists  a constant $C_s$ only depending on $s$ such that
\begin{equation}\label{liu02}
\begin{split}
|\nabla^s g(u)|_2\leq C_s\Big \|\frac{\partial g}{\partial u }\Big\|_{s-1}|u|^{s-1}_{\infty}|\nabla^s u|_{2}.
\end{split}
\end{equation}
\end{enumerate}
\end{lemma}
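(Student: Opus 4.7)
The plan is to prove each of the two Moser-type estimates by reducing everything to the Leibniz rule plus Gagliardo–Nirenberg interpolation (Lemma \ref{lem2as}), which is the classical route in Majda's book.

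For part (1), the idea is to expand $\nabla^\nu(fg)$ with the Leibniz rule, writing it as a sum $\sum_{\alpha+\beta=\nu}c_{\alpha\beta}\,\nabla^\alpha f\cdot\nabla^\beta g$. To estimate each term in $L^2$ (and likewise for the higher derivatives needed to build the $H^s$ norm), I would apply Hölder's inequality with exponents $1/p+1/q=1/2$ chosen so that the Gagliardo–Nirenberg estimates
\[
|\nabla^\alpha f|_{p}\le C|f|_\infty^{1-|\alpha|/s}|\nabla^s f|_2^{|\alpha|/s},\qquad |\nabla^\beta g|_{q}\le C|g|_\infty^{1-|\beta|/s}|\nabla^s g|_2^{|\beta|/s}
\]
balance out. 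Multiplying the two and using $|\alpha|+|\beta|=|\nu|\le s$, the powers of $|\nabla^s f|_2$ and $|\nabla^s g|_2$ add up to $|\nu|/s\le 1$, so a final application of Young's inequality $a^\theta b^{1-\theta}\le \theta a+(1-\theta)b$ converts the mixed product into the desired sum $|f|_\infty|\nabla^s g|_2+|g|_\infty|\nabla^s f|_2$. Summing over the finitely many $(\alpha,\beta)$ and over $|\nu|\le s$ gives the stated inequality.

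For part (2), I would proceed by induction on $s$. The case $s=1$ is immediate from the chain rule $\nabla g(u)=g'(u)\nabla u$, since $|g'(u)|_\infty\le \|g'\|_{L^\infty(\mathrm{range}(u))}\le \|g'\|_0$ (after restricting to a compact set containing the range of the continuous, bounded $u$). For the inductive step, the Faà di Bruno formula expresses $\nabla^s g(u)$ as a sum of terms of the form $g^{(k)}(u)\prod_{j=1}^{k}\nabla^{s_j}u$ with $s_1+\cdots+s_k=s$ and $1\le k\le s$. I would bound each such term in $L^2$ by placing $g^{(k)}(u)$ in $L^\infty$ (controlled, after composition with $u$, by $\|g^{(k)}\|_{L^\infty}\le C\|g'\|_{s-1}$ via the $s=1$ argument on derivatives of $g$), and then applying Lemma \ref{lem2as} to each $\nabla^{s_j}u$ with exponents summing appropriately. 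The interpolation once again yields a total power $|u|_\infty^{k-1}$ together with $|\nabla^s u|_2$, giving the claimed $|u|_\infty^{s-1}|\nabla^s u|_2$ after bounding $k-1\le s-1$ and absorbing $|u|_\infty$'s using $|u|_\infty$ itself.

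The one genuinely delicate step, and the one I would be most careful about, is the bookkeeping of the interpolation exponents in both parts: one has to verify that the Gagliardo–Nirenberg exponents $\theta_\alpha=|\alpha|/s$ and $\theta_\beta=|\beta|/s$ are admissible (i.e., that the corresponding $L^{p_\alpha},L^{p_\beta}$ fall in $[2,\infty]$), which forces the condition $|\nu|\le s$ used in the hypothesis. Once the exponents are verified to lie in the correct Sobolev embedding range in $\mathbb{R}^3$, the remaining work is just algebraic manipulation of the powers, and Young's inequality finishes the job.
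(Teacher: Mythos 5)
The paper gives no proof of this lemma at all: it is quoted from Majda \cite{amj} with the argument omitted, so there is no in-paper proof to compare against. Your route --- Leibniz, the Gagliardo--Nirenberg interpolation $|\nabla^\alpha f|_{2s/|\alpha|}\le C|f|_\infty^{1-|\alpha|/s}|\nabla^s f|_2^{|\alpha|/s}$, H\"older, then Young --- is exactly the classical proof of these Moser-type estimates in the cited reference, and it is correct in the top-order case $|\nu|=s$ of part (1) and for the Fa\`a di Bruno terms with $\sum_j s_j=s$ in part (2). (You also silently, and reasonably, read the left-hand side of part (1) as $|\nabla^\nu(fg)|_2$ rather than the $H^s$ norm printed in the statement, which is a typo in the paper.)

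The genuine gap is your treatment of $|\nu|<s$ in part (1). There the H\"older exponents satisfy $1/p+1/q=|\nu|/(2s)<1/2$, so the product $\nabla^\alpha f\,\nabla^\beta g$ is controlled in $L^{2s/|\nu|}$ with $2s/|\nu|>2$, which on the unbounded domain $\mathbb{R}^3$ does not embed into $L^2$; moreover the exponents of $|\nabla^s f|_2$ and $|\nabla^s g|_2$ sum to $|\nu|/s<1$, and Young's inequality $a^\theta b^{1-\theta}\le\theta a+(1-\theta)b$ requires the exponents to sum to exactly $1$ (a bound $a^{\theta_1}b^{\theta_2}\le C(a+b)$ with $\theta_1+\theta_2<1$ fails as $a,b\to 0$). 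This is not a repairable bookkeeping slip: with $|\nabla^s\cdot|_2$ the homogeneous seminorm, as in the paper's notation, the inequality is actually false for $|\nu|<s$ --- take $f=g$, rescale $f_\lambda(x)=f(x/\lambda)$, and observe that for $\nu=0$ the left side grows like $\lambda^{3/2}$ while the right side decays like $\lambda^{3/2-s}$. The estimate holds either only at top order $|\nu|=s$, or with the full norms $\|f\|_s$, $\|g\|_s$ on the right; your proof establishes the former, and you should state that restriction rather than claim the case $|\nu|<s$. A smaller point in part (2): the final absorption $|u|_\infty^{k-1}\le|u|_\infty^{s-1}$ is valid only when $|u|_\infty\ge 1$; the clean output of the Fa\`a di Bruno computation is $\sum_{k\le s}C_k|u|_\infty^{k-1}|\nabla^s u|_2$, and one should either keep that form or replace $|u|_\infty^{s-1}$ by $(1+|u|_\infty)^{s-1}$.
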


The last  lemma is one  useful tool to improve the weak convergence to a strong one.
\begin{lemma}\cite{amj}\label{zheng5}
If function sequence $\{w_n\}^\infty_{n=1}$ converges weakly in a Hilbert space $X$ to $w$, then $w_n$  converges strongly to $w$ in $X$ if and only if
$$
\|w\|_X \geq \lim \text{sup}_{n \rightarrow \infty} \|w_n\|_X.
$$
\end{lemma}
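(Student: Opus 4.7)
The plan is to prove the two implications of this characterization separately, relying on the Hilbert-space polarization identity $\|w_n - w\|_X^2 = \|w_n\|_X^2 - 2\langle w_n, w\rangle_X + \|w\|_X^2$ together with the weak lower semicontinuity of the norm.

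First I would dispatch the easy direction: if $w_n$ converges to $w$ strongly in $X$, then the norm is continuous, so $\|w_n\|_X \to \|w\|_X$, and in particular $\limsup_{n\to\infty} \|w_n\|_X = \|w\|_X$, which certainly satisfies the stated inequality.

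For the converse, assume weak convergence $w_n \rightharpoonup w$ together with the hypothesis $\|w\|_X \geq \limsup_{n\to\infty} \|w_n\|_X$. By weak lower semicontinuity of the norm (a direct consequence of the fact that $w$ itself is an admissible test vector in the definition of weak convergence, since $\|w\|_X^2 = \langle w,w\rangle_X = \lim \langle w_n,w\rangle_X \leq \liminf \|w_n\|_X \|w\|_X$), one has $\|w\|_X \leq \liminf_{n\to\infty} \|w_n\|_X$. Chained with the hypothesis this forces $\lim_{n\to\infty} \|w_n\|_X = \|w\|_X$. Then expanding
\begin{equation*}
\|w_n - w\|_X^2 = \|w_n\|_X^2 - 2\langle w_n, w\rangle_X + \|w\|_X^2,
\end{equation*}
the first term tends to $\|w\|_X^2$ by the step just proved and the middle term tends to $2\langle w, w\rangle_X = 2\|w\|_X^2$ by weak convergence tested against the fixed element $w$, so $\|w_n - w\|_X^2 \to 0$, which is exactly strong convergence.

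The proof has no genuine obstacle; the only point worth flagging is that the argument uses the Hilbert structure in two places, namely the polarization expansion of $\|w_n - w\|_X^2$ and the identification of $X$ with its dual which lets us take $w$ itself as a test vector. The analogous statement fails in general Banach spaces (e.g.\ non-uniformly-convex ones), so any proof must exploit these two ingredients.
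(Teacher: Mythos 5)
Your proof is correct and complete. The paper does not prove this lemma but simply cites it from Majda's book, so there is no in-paper argument to compare against; your argument is the standard one. The only cosmetic remarks: the expansion $\|w_n - w\|_X^2 = \|w_n\|_X^2 - 2\langle w_n, w\rangle_X + \|w\|_X^2$ is not usually called the ``polarization identity'' (that term conventionally refers to recovering the inner product from norms), though the computation is of course correct; and the step where you divide $\|w\|_X^2 \le \liminf \|w_n\|_X\,\|w\|_X$ by $\|w\|_X$ silently assumes $w \neq 0$, but the case $w = 0$ is immediate since norms are nonnegative, so this is not a genuine gap.
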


For simplicity,  by  introducing 
four matrices $A_1, A_2, A_3, B=(b_{ij})=(\textbf{b}_1,\textbf{b}_2,\textbf{b}_3)$, a vector $W=(w_1,w_2,w_3)^\top$,  and  letting $A=(A_1,A_2, A_3)$, we denote
\begin{equation}\begin{cases}\label{E:1.34}\displaystyle
\text{div}A=\sum_{j=1}^3 \partial_jA_j,\quad W^\top B W=\sum_{i,j=1}^3 b_{ij}w_i w_j,\\[8pt]
\displaystyle
 |B|_2^2=B: B=\sum_{i,j=1}^3 b^2_{ij},\quad 
W\cdot B=w_1 \textbf{b}_1+w_2  \textbf{b}_2+w_3\textbf{b}_3.\\
\end{cases}
\end{equation}
The above symbols will be used in the rest of the paper.

\section{Uniform Regularity}

In this section,  we will establish the desired uniform regularity shown in Theorem \ref{th2}.
As the discussion shown in Subsection $1.3$, for this purpose   we need to  consider the following reformulated problem:
\begin{equation}\label{li41}
\begin{cases}
\displaystyle
\varphi_t+u\cdot\nabla\varphi+\frac{\delta-1}{2}\varphi\text{div} u=0,\\[6pt]
\displaystyle
A_0W_t+\sum_{j=1}^3A_j(W) \partial_j W+\epsilon \varphi^2 \mathbb{{L}}(W)=\epsilon \mathbb{{H}}(\varphi)  \cdot \mathbb{{Q}}(W),\\[6pt]
\displaystyle
(\varphi,W)|_{t=0}=(\varphi_0,W_0),\quad x\in \mathbb{R}^3,\\[6pt]
(\varphi,W)\rightarrow (0,0),\quad \text{as}\quad  |x|\rightarrow +\infty, \quad t\geq 0,
 \end{cases}
\end{equation}
where $W=(\phi, u)^\top$ and
\begin{equation} \label{sfana1}
\begin{split}
&(\varphi_0,W_0)=(\varphi, \phi,u)|_{t=0}=(\varphi_0,\phi_0,u_0)=\big(\rho^{\frac{\delta-1}{2}}_0(x), \rho^{\frac{\gamma-1}{2}}_0(x),u_0(x)\big),\quad x\in \mathbb{R}^3.
\end{split}
\end{equation}
The definitions of $A_j$ ($j=0,1,...,3$), $\mathbb{L}$,   $\mathbb{H}$ and $\mathbb{Q}$ can  be found  in   (\ref{sseq:5.2qq})-(\ref{sseq:5.3qq}).

 To prove  Theorem \ref{th2}, our first step is to establish  the following existence of the unique strong solutions  for the reformulated problem (\ref{li41}):
\begin{theorem}\label{ths1}
 If the initial data $( \varphi_0, \phi_0, u_0)$ satisfy
\begin{equation}\label{th78qq}
\begin{split}
& \varphi_0\geq 0, \quad \phi_0\geq 0, \quad (\varphi_0, \phi_0,u_0)\in H^3,
\end{split}
\end{equation}
then there exists a positive time $T_*$ independent of $\epsilon$,  and a unique strong solution $(\varphi, \phi, u)$ in $[0,T_*] \times \mathbb{R}^3$ to the Cauchy problem (\ref{li41}) satisfying
\begin{equation}\label{reg11qq}\begin{split}
& \varphi \in C([0,T_*];H^3),\quad  \phi \in C([0,T_*];H^3),\quad  u\in C([0,T_*]; H^{s'})\cap L^\infty([0,T_*]; H^3), \\
& \varphi \nabla^4 u\in L^2([0,T_*] ; L^2), \quad  u_t \in C([0,T_*]; H^1)\cap L^2([0,T_*] ; D^2),
\end{split}
\end{equation}
for any constant $s' \in[2,3)$. Moreover, we  can also obtain the following uniform estimates:
\begin{equation}\label{jka}
\begin{split}
\sup_{0\leq t \leq T_*}\big(\| \varphi\|^2_{2}+\epsilon |\varphi|^2_{D^3}+\| \phi\|^2_{3}+\| u\|^2_{2}\big)(t)& \\
+\text{ess}\sup_{0\leq t \leq T_*}| u(t)|^2_{D^3}
+\int_{0}^{T_*} \epsilon |\varphi \nabla^4 u|^2_{2}
\text{d}t \leq& C^0,\\
\end{split}
\end{equation}
where $C^0$ is a positive constant depending only on $T_*$, $(\varphi_0, \phi_0,u_0 )$ and the fixed constants $A$, $\delta$, $\gamma$, $\alpha$ and $\beta$, and is independent of $\epsilon$.
\end{theorem}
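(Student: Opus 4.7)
The plan is to follow the four-step scheme outlined in the introduction: linearize, solve the linearized problem with an artificial non-degenerate viscosity, establish a priori estimates uniform in both the artificial parameter $\eta$ and the physical parameter $\epsilon$, pass to the limit $\eta\to 0$, and finally close the nonlinear problem by a Picard iteration. The reformulation $(\ref{li41})$ is what makes this approach viable: the transport equation for $\varphi=\rho^{(\delta-1)/2}$ is decoupled at highest order from the quasi-symmetric hyperbolic part governing $W=(\phi,u)^\top$, while the only coupling into the elliptic operator appears through the degenerate coefficient $\varphi^2$.

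First, I would fix $(v,\psi,\chi)$ in an appropriate iteration class and solve the linearized system
\begin{equation*}
\begin{cases}
\varphi_t + v\cdot\nabla\varphi + \tfrac{\delta-1}{2}\varphi\,\mathrm{div}\,v = 0,\\[4pt]
A_0 W_t + \sum_{j=1}^3 A_j(\chi,\psi)\,\partial_j W + \epsilon(\psi^2+\eta^2)\mathbb{L}(W) = \epsilon\,\mathbb{H}(\chi)\cdot\mathbb{Q}(v),
\end{cases}
\end{equation*}
where $\eta>0$ is an artificial viscosity chosen to make the elliptic operator $\epsilon(\psi^2+\eta^2)Lu$ uniformly coercive. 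The transport part gives $\varphi$ by the method of characteristics along the flow of $v$, and the coupled symmetric-hyperbolic/uniformly-parabolic system yields $W$ by standard linear theory, giving $\phi\in C([0,T];H^3)$ and $u\in C([0,T];H^3)\cap L^2(0,T;D^4)$.

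The core of the argument is the derivation of $(\eta,\epsilon)$-independent a priori estimates on a common time interval $[0,T_*]$. For $\varphi$ I would apply $\nabla^k$ ($k\le 2$) to the transport equation and use the commutator bounds in Lemma $\ref{zhen1}$ to get $\|\varphi\|_2\le C$ depending only on $\|u\|_3$; the $H^3$ bound, which only holds with a factor $\epsilon^{1/2}$, is obtained by applying $\epsilon^{1/2}\nabla^3$ and absorbing the top-order commutator $\epsilon^{1/2}\varphi\nabla^3\mathrm{div}\,u$ using the dissipation term $\int \epsilon|\varphi\nabla^4 u|_2^2$ from the $W$-estimate. For $W$, I would differentiate $(\ref{li41})_2$ up to order three, pair with $A_0\nabla^k W$, and use the symmetry of $A_j$ to turn top-order convective terms into lower-order ones via integration by parts; the positivity $(\ref{sseq:5.3qq})$ gives coercivity in $\|W\|_3$. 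The elliptic term contributes $\epsilon\int(\varphi^2\nabla^k Lu)\cdot\nabla^k u = \epsilon\int|\varphi\nabla^{k+1}u|_2^2 + \text{commutators}$, which is non-negative at the top order and provides the key dissipation $\epsilon\int_0^t|\varphi\nabla^4 u|_2^2$. The source $\epsilon\mathbb{H}(\varphi)\cdot\mathbb{Q}(W)\sim \epsilon\nabla\varphi^2\cdot\nabla u$ is handled by interpolation: its top-order pieces involve $\epsilon\nabla^3\varphi^2\cdot\nabla u$ or $\epsilon\nabla\varphi^2\cdot\nabla^3 u$, both controlled either by $\epsilon\|\varphi\|_2\|\nabla u\|_2$ factors or absorbed into $\epsilon^{1/2}|\varphi\nabla^4 u|_2$ after one integration by parts, which is exactly why the choice of $\delta>1$ and the bound $1<\min\{\delta,\gamma\}\le 3$ become relevant.

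I expect the main obstacle to be the closing of the $H^3$-bound on $u$ without losing powers of $\epsilon$, i.e.\ showing $\mathrm{ess\,sup}\,|u|_{D^3}^2\le C^0$ rather than $\le C/\epsilon$. The standard elliptic approach $|u|_{D^3}\le C\epsilon^{-1}\|\varphi^{-2}(\cdot)\|$ fails in the presence of vacuum, so one must read the bound off directly from the pairing of $A_0 W_{tt}$ or $A_0\nabla^3 W_t$ type estimates with the hyperbolic skew-symmetry, together with the dissipation $\epsilon|\varphi\nabla^4 u|_2^2$ — exactly the role of the ``quasi-symmetric hyperbolic''/``degenerate elliptic'' coupled structure. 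Once these estimates are in hand on $[0,T_*]$ with $T_*$ independent of $(\eta,\epsilon)$, passing to $\eta\to 0$ by Aubin-Lions (Lemma $\ref{aubin}$) gives a weak solution of the linear problem with $\varphi^2$ as the elliptic coefficient. The Picard iteration at the nonlinear level then closes by a standard contraction in a lower-order norm (say $L^\infty_t H^1_x$ for $W$ and $L^\infty_t L^2_x$ for $\varphi$), and uniqueness plus the time-continuity of the top-order norms follow from Lemma $\ref{1}$ and Lemma $\ref{zheng5}$ in a routine way.
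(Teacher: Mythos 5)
Your proposal follows essentially the same route as the paper: linearize with an artificial uniformly elliptic operator $\epsilon(\cdot+\eta^2)L$, derive a priori bounds uniform in $(\eta,\epsilon)$ by energy estimates that exploit the symmetry of the $A_j$ and absorb the degenerate source into the dissipation $\epsilon|\varphi\nabla^4u|_2^2$ (with only $\epsilon|\varphi|_{D^3}^2$ controlled at top order for $\varphi$), pass to $\eta\to0$ via Aubin--Lions and weak lower semicontinuity, and close the nonlinear problem by Picard iteration contracting in a low-order norm. The minor deviations (placing the known iterate rather than the freshly transported $\varphi$ in the elliptic coefficient and source, and contracting in $L^\infty_tH^1$ rather than $L^\infty_tL^2$ for $W$) do not change the argument in any essential way.
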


We will subsequently prove Theorem \ref{ths1} through the following  four Subsections $3.1$-$3.4$, and in the next section,  we will show that this theorem indeed implies  Theorem \ref{th2}.

\subsection{Linearization with an artificial strong elliptic operator}\ \\

Let $T$ be any positive time.  In order to construct the local strong  solutions for the nonlinear problem, we need to consider the following  linearized approximation  problem:
\begin{equation}\label{li4}
\begin{cases}
\displaystyle
\varphi_t+v\cdot\nabla\varphi+\frac{\delta-1}{2}\omega\text{div} v=0,\\[8pt]
\displaystyle
A_0W_t+\sum_{j=1}^3A_j(V) \partial_j W+\epsilon (\varphi^2+\eta^2) \mathbb{{L}}(W)=\epsilon \mathbb{{H}}(\varphi)  \cdot \mathbb{{Q}}(V),\\[8pt]
\displaystyle
(\varphi,W)|_{t=0}=(\varphi_0,W_0),\quad x\in \mathbb{R}^3,\\[8pt]
(\varphi,W)\rightarrow (0,0),\quad \text{as}\quad  |x|\rightarrow +\infty, \quad t\geq 0,
 \end{cases}
\end{equation}
where $\eta \in (0,1]$ is a constant,   $W=(\phi, u)^\top, V=\left(\psi, v\right)^\top$ and $W_0=(\phi_0,u_0)^\top$.
$(\omega, \psi)$ are both known functions and $v=(v^{(1)},v^{(2)},v^{(3)})^\top\in \mathbb{R}^3$ is a known vector satisfying the initial assumption $(\omega, \psi, v)(t=0,x)=(\varphi_0,\phi_0, u_0)$ and :
\begin{equation}\label{vg}
\begin{split}
& \omega\in C([0,T]; H^3), \quad  \omega_t\in C([0,T]; H^2),\quad \psi\in C([0,T]; H^3), \\
& \psi_t\in C([0,T]; H^2),\quad  v\in C([0,T] ; H^{s'})\cap L^\infty([0,T]; H^3),\\
 & \omega \nabla^4 v\in L^2([0,T] ; L^2), \quad  v_t\in C([0,T]; H^1)\cap L^2([0,T]; D^2),
\end{split}
\end{equation}
for any constant $s'\in[2,3)$.
Moreover, we assume that
\begin{equation}\label{th78rr}
\begin{split}
&\varphi_0\geq 0,\quad \phi_0\geq 0,\quad   (\varphi_0, W_0)\in H^3.
\end{split}
\end{equation}

Now  we have  the following global existence of a strong solution $(\varphi, \phi, u)$ to (\ref{li4}) by the standard methods at least when $\eta>0$.

 \begin{lemma}\label{lem1}
 Assume  that the initial data $(\varphi_0, \phi_0,u_0)$ satisfy (\ref{th78rr}).
Then there exists  a unique strong solution $(\varphi,\phi,u)$ in $[0,T]\times \mathbb{R}^3$ to (\ref{li4}) when $\eta>0$ such that
\begin{equation}\label{reggh}\begin{split}
& \varphi \in C([0,T]; H^3), \ \phi \in C([0,T]; H^3), \\
& u\in C([0,T]; H^3) \cap L^2([0,T] ; D^4), \ u_t \in C([0,T]; H^1)\cap L^2([0,T] ; D^2).
\end{split}
\end{equation}
\end{lemma}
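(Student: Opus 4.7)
The plan is to exploit the fact that with $\eta>0$ the dissipative operator $\epsilon(\varphi^{2}+\eta^{2})\mathbb{L}(\cdot)$ is uniformly elliptic on the $u$-component, with ellipticity constant bounded below by $\epsilon\eta^{2}\alpha$, so that $(\ref{li4})$ becomes a genuine linear hyperbolic--parabolic system with coefficients already of the regularity advertised in (\ref{vg}). I would decouple the transport equation for $\varphi$ from the $W$-subsystem and then handle the coupled $(\phi,u)$-block by a standard Picard iteration. No estimate in this step must be uniform in $\eta$ or $\epsilon$, which is the only reason the argument reduces to classical linear theory.

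First, since the right-hand side $-\frac{\delta-1}{2}\omega\,\text{div}\,v$ of the $\varphi$-equation depends only on the known functions $v,\omega$, that equation decouples from the rest. With $v\in L^{\infty}(0,T;H^{3})$ and $\omega\,\text{div}\,v\in L^{\infty}(0,T;H^{2})$, standard linear transport theory (propagation along the $v$-characteristics, together with commutator estimates (\ref{ku11})--(\ref{ku22}) in each Sobolev norm up to $H^{3}$) produces a unique $\varphi\in C([0,T];H^{3})$ with $\varphi(0)=\varphi_{0}$.

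With $\varphi$ now fixed, I would solve the coupled $(\phi,u)$-subsystem by a Picard iteration $\{(\phi^{(n)},u^{(n)})\}_{n\geq 0}$. Given $u^{(n)}$, the scalar row of $(\ref{li4})_{2}$ reads $\phi_{t}+v\cdot\nabla\phi+\frac{\gamma-1}{2}\psi\,\text{div}\,u^{(n)}=0$, from which $\phi^{(n+1)}\in C([0,T];H^{3})$ is produced exactly as in Step~1. Given $\phi^{(n+1)}$, define $u^{(n+1)}$ as the solution of the linear parabolic Cauchy problem read from the vector rows of $(\ref{li4})_{2}$, whose principal part $\epsilon(\varphi^{2}+\eta^{2})Lu$ is uniformly elliptic with smooth coefficients and whose sources are fixed functions of $(\varphi,v,\psi,\phi^{(n+1)})$. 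Standard linear parabolic theory (Galerkin approximation against an $H^{3}$-basis, combined with Lions-type energy identities, the Moser-type product estimate (\ref{liu01}), and bootstrapping) delivers $u^{(n+1)}\in C([0,T];H^{3})\cap L^{2}(0,T;D^{4})$ with $u^{(n+1)}_{t}\in C([0,T];H^{1})\cap L^{2}(0,T;D^{2})$. A contraction in $L^{\infty}(0,T^{\sharp};L^{2})$ on a short subinterval $T^{\sharp}\leq T$ supplies a fixed point there, and the $H^{3}$ bounds along the iteration (which are allowed to depend on $\eta,\epsilon$ here) permit passage to the limit; the local solution is then extended up to the full $[0,T]$ by iterating the short-time construction, since the linear estimates do not blow up in finite time.

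Uniqueness follows from a routine $L^{2}$ energy estimate on the difference of two solutions, exploiting the symmetry of $A_{0},A_{j}$ for the first-order part and the positivity $\epsilon(\varphi^{2}+\eta^{2})\alpha>0$ for the Lam\'e term. The only technical nuisance is time continuity of the top-order norms, which is obtained in the usual way: weak continuity from the equations, upgraded to norm continuity by Lemma~\ref{zheng5} applied after closing an $H^{3}$ energy identity. There is therefore no genuine obstacle at this stage; the delicate work---and the real novelty of the paper---begins in Sections~3.2--3.3, where the ``smooth coefficients'' viewpoint has to be replaced by sharp estimates independent of both $\eta$ and $\epsilon$, and one then sends $\eta\to 0$ against a degenerating elliptic operator.
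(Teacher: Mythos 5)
Your proposal matches the paper's (very terse) proof: the paper likewise first solves the decoupled transport equation for $\varphi$ by standard linear transport theory, and then simply invokes standard theory for the linear symmetric hyperbolic--parabolic system $(\ref{li4})_2$, whose ellipticity is uniform thanks to $\eta>0$, omitting all details. Your inner Picard iteration between the $\phi$-row (a transport equation) and the uniformly parabolic $u$-rows is just one standard and correct way of instantiating that second step, so the argument is sound and essentially identical in structure.
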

\begin{proof}
First, the existence and regularities of a unique solution $\varphi$ in $(0,T)\times \mathbb{R}^3$ to the equation  $(\ref{li4})_1$ can be obtained by the standard theory of transport equation (see \cite{evans}).

Second,   when $\eta>0$, based on the regularities of $\varphi$,  it is not difficult to solve $W$ from the linear  symmetric  hyperbolic-parabolic coupled system $(\ref{li4})_2$
to complete the proof of this lemma (see \cite{evans}).  Here we omit its details.
\end{proof}

In the following two subsections, we first  establish the uniform estimates for $(\phi,u)$ in $H^3$ space with respect to both  $\eta$ and $\epsilon$, then we  pass to the limit for the case: $\eta=0$.

\subsection{A priori estimates  independent of $(\eta, \epsilon)$}\ \\

Let $(\varphi, \phi, u)$ be the unique strong solution to (\ref{li4}) in $[0,T] \times \mathbb{R}^3$ obtained in Lemma \ref{lem1}. In this subsection, we will get some local (in time)  a priori estimates for $(\phi,u)$ in $H^3$ space, which are independent of $(\eta, \epsilon)$ listed in the following  Lemmas \ref{f2}-\ref{s7}.
For this purpose, we fix $T>0$ and a  positive constant $c_0$ large enough such  that
\begin{equation}\label{houmian}\begin{split}
2+\|\varphi_0\|_{3}+\|\phi_0\|_{3}+
\|u_0\|_{3}\leq c_0,
\end{split}
\end{equation}
and
\begin{equation}\label{jizhu1}
\begin{split}
\displaystyle
\sup_{0\leq t \leq T^*}\big(\| \omega(t)\|^2_{1} +\| \psi(t)\|^2_{1}+\| v(t)\|^2_{1})+\int_0^{T^*}{\epsilon}
|\omega \nabla^2 v|_2^2\text{d}t  & \leq c^2_1,\\
\displaystyle
\sup_{0\leq t \leq T^*}\big(| \omega(t)|^2_{D^2} +| \psi(t)|^2_{D^2}+|v(t)|^2_{D^2}\big)+\int_{0}^{T^*} \epsilon |\omega \nabla^3 v|^2_{2}
\text{d}t &\leq c^2_2,\\
\displaystyle
\text{ess}\sup_{0\leq t \leq T^*}\big(| \psi(t)|^2_{D^3} +|v(t)|^2_{D^3}+\epsilon | \omega(t)|^2_{D^3}\big)
+\int_{0}^{T^*}\epsilon |\omega \nabla^4 v|^2_{2}\text{d}t &\leq c^2_3,
%\displaystyle
%\text{ess}\sup_{0\leq t \leq T^*}\big(\| \omega_t(t)\|^2_{1} +\| v_t(t)\|^2_{1}+|\psi_t(t)|^2_{D^2}+\epsilon | \omega_t(t)|^2_{D^2}\big)
%+\int_{0}^{T^*} |v_t|^2_{D^2}\text{d}t &\leq c^2_4,
\end{split}
\end{equation}
for some time $T^*\in (0,T)$ and constants $c_i$ ($i=1,2,3$) such that
$$1< c_0\leq c_1 \leq c_2 \leq c_3. $$ The constants $c_i$ ($i=1,2,3$) and $T^*$ will be determined later  (see (\ref{dingyi})) and depend only on $c_0$ and the fixed constants  $\alpha$, $\beta$, $\gamma$, $A$, $\delta$ and $T$.

Hereinafter, we use  $C\geq 1$ to denote  a generic positive constant depending only on fixed constants $\alpha$, $\beta$, $\gamma$, $A$, $\delta$ and $T$,  but is  independent of  $(\eta, \epsilon)$, which may be different from line to line. We start from the estimates for $\varphi$.

\begin{lemma}\label{f2} Let $(\varphi,W)$ be the unique strong solution to (\ref{li4}) on $[0,T] \times \mathbb{R}^3$. Then
\begin{equation*}\begin{split}
1+|\varphi(t)|^2_\infty+
\|\varphi(t)\|^2_2\leq Cc^2_0,\quad \epsilon |\varphi(t)|^2_{D^3}\leq& Cc^2_0,\\
|\varphi_t(t)|^2_2 \leq Cc^4_1,\quad |\varphi_t(t)|^2_{D^1} \leq Cc^4_2,\quad \epsilon |\varphi_t(t)|^2_{D^2} \leq& Cc^4_3,
\end{split}
\end{equation*}
for $0\leq t \leq T_1=\min (T^*, (1+c_3)^{-2})$.
\end{lemma}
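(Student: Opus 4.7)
The plan is to treat $(3.12)_1$ as a linear inhomogeneous transport equation for $\varphi$ along the given flow $v$, with source $-\tfrac{\delta-1}{2}\omega\,\mathrm{div}\,v$, and to read off the $\varphi_t$ bounds directly from the equation once spatial bounds are in place. For the $L^2$, $H^1$ and $H^2$ estimates I differentiate $(3.12)_1$ applying $\nabla^k$ ($k=0,1,2$), take the $L^2$ inner product with $\nabla^k\varphi$, and integrate by parts to replace the top-order transport term by $\frac12\int(\mathrm{div}\,v)|\nabla^k\varphi|^2$. The commutator $[\nabla^k,v\cdot\nabla]\varphi$ and the source piece $\nabla^k(\omega\,\mathrm{div}\,v)$ are controlled via Lemma 2.2 (with the two versions chosen appropriately to put $L^\infty$ norms on the factor having cheapest regularity) together with the product estimate from Lemma 2.7 and the Gagliardo--Nirenberg/Sobolev inequalities in Lemmas 2.1 and 2.6. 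Inserting (3.10) gives, for all $k\le 2$, a Grönwall inequality of the form $\tfrac{d}{dt}\|\varphi\|_2^2\le C\|v\|_3\,\|\varphi\|_2^2+P(c_1,c_2,c_3)\,\|\varphi\|_2$. Because $T_1\le(1+c_3)^{-2}$, both the exponential factor $\exp(\int_0^t C\|v\|_3\,ds)$ and the time integral of $P(c_1,c_2,c_3)$ remain bounded by an absolute constant, yielding $\|\varphi(t)\|_2^2\le Cc_0^2$.

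The delicate step is the $\epsilon$-weighted $D^3$ bound, because $\nabla^3(\omega\,\mathrm{div}\,v)$ contains the critical term $\omega\,\nabla^4 v$, which by (3.10) is only controlled in $L^2_tL^2_x$ after multiplication by $\sqrt{\epsilon}$. To match this weighting I apply $\nabla^3$ to $(3.12)_1$, test against $\epsilon\,\nabla^3\varphi$, and integrate. The commutator $\epsilon\int[\nabla^3,v\cdot\nabla]\varphi\cdot\nabla^3\varphi$ and the lower-order constituents of $\epsilon\nabla^3(\omega\,\mathrm{div}\,v)\cdot\nabla^3\varphi$ (where at most three derivatives fall on $v$) are handled exactly as in the previous paragraph using Lemmas 2.2 and 2.7. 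The critical piece is paired as $(\sqrt{\epsilon}\,\omega\,\nabla^4 v)\cdot(\sqrt{\epsilon}\,\nabla^3\varphi)$ and estimated by Cauchy--Schwarz in time against the $c_3^2$-bound on $\int_0^{T^*}\epsilon|\omega\nabla^4 v|_2^2\,dt$. Grönwall then gives $\epsilon|\varphi(t)|_{D^3}^2\le Cc_0^2$ on $[0,T_1]$, provided $T_1$ is small enough to absorb any $\|\varphi\|_2$ and $\sqrt{\epsilon}|\varphi|_{D^3}$ factors produced by the commutators.

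Finally, the three bounds on $\varphi_t$ are obtained by writing $\varphi_t=-v\cdot\nabla\varphi-\tfrac{\delta-1}{2}\omega\,\mathrm{div}\,v$ and taking $L^2$, $D^1$ and $D^2$ norms. Lemma 2.7 reduces each of these to products of norms of $(v,\omega,\varphi)$ already under control: the $L^2$ estimate uses $|v|_6,|\omega|_6\le Cc_1$ and the $H^2$ bounds on $\varphi$; the $D^1$ estimate brings in $|\nabla v|_\infty,|\nabla\omega|$ controlled by $c_2$; and the $\epsilon$-weighted $D^2$ estimate again encounters $\omega\,\nabla^3\mathrm{div}\,v$, which is paired as $(\sqrt\epsilon\,\omega\nabla^4 v)(\sqrt\epsilon\,\nabla^2\varphi_t)$ and controlled with the $c_3^2$-bound above. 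The main obstacle throughout is precisely this near-top-order source term $\omega\,\nabla^4 v$, for which no $\epsilon$-uniform $L^\infty_tL^2_x$ bound is available; the resolution, used consistently, is to place an $\sqrt\epsilon$ weight on both the test function and the offending factor so as to extract exactly the space-time integrand $\epsilon|\omega\nabla^4 v|_2^2$ that is assumed to lie in $L^1_t$.
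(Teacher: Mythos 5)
Your overall strategy (energy/commutator estimates on the transport equation plus Gr\"onwall, and reading the $\varphi_t$ bounds off the equation) is the paper's, and your treatment of the cases $|\zeta|\le 2$ and of $|\varphi_t|_2$, $|\varphi_t|_{D^1}$ matches the paper's proof. The one step that does not go through as you describe it is the $\epsilon$-weighted $D^3$ bound. If you test against $\epsilon\nabla^3\varphi$ and absorb the critical pairing via $\epsilon\int|\omega\nabla^4v||\nabla^3\varphi|\le\frac12\int\epsilon|\omega\nabla^4v|_2^2+\frac12\int\epsilon|\nabla^3\varphi|_2^2$, the first integral is only bounded by $c_3^2$ from \eqref{jizhu1}, with no time-smallness available to reduce it, so Gr\"onwall delivers $\epsilon|\varphi(t)|^2_{D^3}\le C(c_0^2+c_3^2)=Cc_3^2$ rather than the asserted $Cc_0^2$. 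This difference is not cosmetic: in \eqref{dingyi} the constants are later fixed as $c_3=C^{1/2}c_0$ with the same generic $C\ge1$, and a bound $Cc_3^2$ would make the bootstrap in \eqref{jkk} fail to close. The paper instead runs Gr\"onwall on the first power $|\nabla^3\varphi|_2$, so the source enters as $\int_0^t|\omega\nabla^4v|_2\,ds\le t^{1/2}\epsilon^{-1/2}\big(\int_0^t\epsilon|\omega\nabla^4v|_2^2\,ds\big)^{1/2}\le c_3t^{1/2}\epsilon^{-1/2}\le\epsilon^{-1/2}$ for $t\le(1+c_3)^{-2}$; this yields $|\varphi(t)|_{D^3}\le C(c_0+\epsilon^{-1/2})$ and hence $\epsilon|\varphi(t)|^2_{D^3}\le C(\epsilon c_0^2+1)\le Cc_0^2$. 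Your weighted scheme can be repaired, but only by applying Cauchy--Schwarz in the form that extracts the factor $t^{1/2}\le(1+c_3)^{-1}$ against the $L^2_tL^2_x$ norm of $\sqrt{\epsilon}\,\nabla^3\varphi$, not by Young's inequality against the full $c_3^2$ budget.

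A second, smaller misidentification concerns $\epsilon|\varphi_t|^2_{D^2}$. Since $\varphi_t=-v\cdot\nabla\varphi-\frac{\delta-1}{2}\omega\,\mathrm{div}\,v$, the operator $\nabla^2$ applied to the source produces at worst $\omega\nabla^3v$, not $\omega\nabla^4v$; by \eqref{jizhu1} this is bounded pointwise in time by $|\omega|_\infty|\nabla^3v|_2\le Cc_0c_3$, so no space--time pairing is needed there. Moreover, a Cauchy--Schwarz-in-time argument of the kind you propose could only control $\int_0^t\epsilon|\nabla^2\varphi_t|_2^2\,ds$, whereas the lemma asserts a bound at each fixed $t$. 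The term that actually forces the $\epsilon$-weight is $|v|_\infty|\nabla^3\varphi|_2\le Cc_3\cdot c_0\epsilon^{-1/4}\cdot\epsilon^{-1/4}$ (i.e.\ the $D^3$ bound on $\varphi$ just obtained), giving $|\varphi_t|_{D^2}\le C(c_3^2+c_3\epsilon^{-1/2})$ and hence $\epsilon|\varphi_t|^2_{D^2}\le Cc_3^4$ after multiplying by $\epsilon$, exactly as in \eqref{zhen6}.
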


\begin{proof}
We apply the operator $\partial^\zeta_x$  $(0\leq |\zeta|\leq 3)$ to $(\ref{li4})_1$, and obtain
\begin{equation}\label{guji1}
(\partial^\zeta_x {\varphi})_t+v\cdot \nabla \partial^\zeta_x {\varphi}=-(\partial^\zeta_x (v\cdot \nabla {\varphi})-v\cdot \nabla \partial^\zeta_x {\varphi})-\frac{\delta-1}{2} \partial^\zeta_x (\omega \text{div}v).
\end{equation}
Then multiplying both sides of (\ref{guji1}) by $\partial^\zeta_x {\varphi}$, and integrating over $\mathbb{R}^3$, we get
\begin{equation}\label{guji2}
\frac{1}{2}\frac{d}{dt}|\partial^\zeta_x {\varphi}|^2_2\leq C|\text{div}v|_\infty |\partial^\zeta_x {\varphi}|^2_2+C\Lambda^\zeta_1 |\partial^\zeta_x {\varphi}|_2+C\Lambda^\zeta_2 |\partial^\zeta_x {\varphi}|_2,
\end{equation}
where
$$
\Lambda^\zeta_1=|\partial^\zeta_x (v\cdot \nabla {\varphi})-v\cdot \nabla \partial^\zeta_x {\varphi}|_2,\quad \Lambda^\zeta_2=|\partial^\zeta_x (\omega \text{div}v)|_2.
$$

First, when $|\zeta|\leq 2$, we  consider the term $\Lambda^\zeta_1$ and $\Lambda^\zeta_2$. It follows from  Lemma \ref{lem2as} and  H\"older's inequality that 
\begin{equation}\label{qe1a}
\begin{split}
|\Lambda^\zeta_1|_2
\leq &C(|\nabla v\cdot \nabla{\varphi}|_2+|\nabla v\cdot \nabla^2{\varphi}|_2+|\nabla^2  v\cdot \nabla {\varphi}|_2)\\
\leq& C(|\nabla v|_\infty \|\nabla {\varphi}\|_1+|\nabla^2  v|_3|\nabla {\varphi}|_6)\leq C\|\nabla v\|_2\|\varphi\|_2,\\
|\Lambda^\zeta_2|_2&\leq C\|w\|_2 \|v\|_3,
\end{split}
\end{equation}
which, along with (\ref{guji2})-(\ref{qe1a}), implies that 
\begin{equation}\label{qe5}
\begin{split}
\frac{d}{dt} \|{\varphi}(t)\|_2\leq& C\|\nabla v\|_2\|\varphi\|_2+C\|w\|_2 \|v\|_3,
\end{split}
\end{equation}
Then according to the  Gronwall's inequality, one has
\begin{equation}\label{gb1}\begin{split}
\|{\varphi}(t)\|_2
\leq& \Big(\|\varphi_0\|_{2}+c^2_3t\Big)\exp (Cc_3t)\leq Cc^2_0,
\end{split}
\end{equation}
for $0\leq t\leq T_1=\min\{T^*,(1+c_3)^{-2}\}$.

Second, when $|\zeta|=3$, it follows  from Lemma \ref{lem2as} and  H\"older's inequality that 
\begin{equation}\label{qe2a}
\begin{split}
|\Lambda^\zeta_1|_2
\leq &C(|\nabla v\cdot \nabla^3{\varphi}|_2+|\nabla^2  v\cdot \nabla^2 {\varphi}|_2+|\nabla^3  v\cdot \nabla {\varphi}|_2)
\leq C\|\nabla v\|_2\|\nabla \varphi\|_2,\\
|\Lambda^\zeta_2|_2
\leq& C(|\omega \nabla^4 v|_2+|\nabla \omega\cdot \nabla^3 v|_2+|\nabla^2 \omega\cdot\nabla^2 v|_2+|\nabla^3 \omega\cdot \nabla v|_2)\\
\leq & C|\omega \nabla^4 v|_2+C\|w\|_3 \|v\|_3.\\
\end{split}
\end{equation}
Then combining (\ref{guji2})-(\ref{qe2a}), we arrive at
\begin{equation}\label{qe5}
\begin{split}
\frac{d}{dt} |\nabla^3{\varphi}(t)|_2\leq& C\big( \|\nabla v\|_2\|\nabla \varphi\|_2+|\omega \nabla^4 v|_2+\|w\|_3 \|v\|_3\big)\\
\leq& C(c_3|\nabla^3{\varphi}|_2+c^2_3+c^2_3\epsilon^{-\frac{1}{2}}+|\omega \nabla^4 v|_2),
\end{split}
\end{equation}
which, along with  Gronwall's inequality, implies that
\begin{equation}\label{gb2}\begin{split}
|\varphi(t)|_{D^3}
\leq& \Big(|\varphi_0|_{D^3}+c^2_3t+c^2_3\epsilon^{-\frac{1}{2}}t+\int_0^t |\omega \nabla^4 v|_2\text{d}s\Big) \exp (Cc_3t).
\end{split}
\end{equation}
Therefore, observing that
$$
\int_0^t |\omega \nabla^4 v|_{2}\text{d}s\leq \epsilon^{-\frac{1}{2}}t^{\frac{1}{2}}\Big(\int_0^t |\epsilon^{\frac{1}{2}}\omega \nabla^4 v|^2_{2}\text{d}s\Big)^{\frac{1}{2}}\leq Cc_3t^{\frac{1}{2}}\epsilon^{-\frac{1}{2}},
$$
 from (\ref{gb2}), one can obtain that 
$$|\varphi(t)|_{D^3} \leq C(c_0+\epsilon^{-\frac{1}{2}}), \quad \text{for}\quad 0\leq t \leq T_1.$$

At last,  the estimates for $\varphi_t$ follows  from the relation:
$$\varphi_t=-v\cdot \nabla \varphi-\frac{\delta-1}{2}\omega\text{div} v.$$
For $0\leq t \leq T_1$, we easily have
\begin{equation}\label{zhen6}
\begin{split}
|\varphi_t(t)|_2\leq & C\big(|v(t)|_6|\nabla \varphi(t)|_3+|\omega(t)|_\infty|\text{div} v(t)|_2\big)\leq Cc^2_2,\\
|\varphi_t(t)|_{D^1}\leq & C\big(|v(t)|_\infty|\nabla^2 \varphi(t)|_{2}+|\nabla v(t)|_6|\nabla \varphi(t)|_3+|\omega(t)|_\infty|\nabla^2 v(t)|_{2}\big)\\
&+C|\nabla v(t)|_6|\nabla \omega(t)|_3\leq Cc^2_2,\\
|\varphi_t(t)|_{D^2}\leq & C\big(|v(t)|_\infty|\nabla^3 \varphi(t)|_{2}+|\nabla v(t)|_\infty |\nabla^2 \varphi(t)|_2+|\nabla^2 v(t)|_6|\nabla \varphi(t)|_3\big)\\
&+C\big(|\omega(t)|_\infty|\nabla^3 v(t)|_{2}+|\nabla \omega(t)|_6|\nabla^2 v(t)|_3+|\nabla^2\omega(t)|_2|\nabla v(t)|_{\infty}\big)\\
\leq& C(c^2_3+c_3\epsilon^{-\frac{1}{2}}).
\end{split}
\end{equation}
Thus, we complete
the proof of this lemma.

\end{proof}

Based on Gagliardo-Nirenberg-Sobolev and interpolation inequalities, we firstly present several useful inequalities
\begin{equation}\label{zhen6a}
\begin{cases}
\displaystyle
|\varphi|_\infty\leq C|\varphi|_6^{\frac{1}{2}}|\nabla\varphi|_6^{\frac{1}{2}}\leq C|\nabla\varphi|_2^{\frac{1}{2}}|\nabla^2\varphi|_2^{\frac{1}{2}}\leq Cc_0,\\[8pt]
\displaystyle
|\nabla\varphi|_\infty\leq C|\nabla\varphi|_6^{\frac{1}{2}}|\nabla^2\varphi|_6^{\frac{1}{2}}\leq C|\nabla^2\varphi|_2^{\frac{1}{2}}|\nabla^3\varphi|_2^{\frac{1}{2}}\leq Cc_0 \epsilon^{-\frac{1}{4}},\\[8pt]
\displaystyle
|\nabla\varphi|_3\leq C|\nabla\varphi|_2^{\frac{1}{2}}|\nabla\varphi|_6^{\frac{1}{2}}\leq C|\nabla\varphi|_2^{\frac{1}{2}}|\nabla^2\varphi|_2^{\frac{1}{2}}\leq Cc_0,\\[8pt]
\displaystyle
|\nabla^2\varphi|_3\leq C|\nabla^2\varphi|_2^{\frac{1}{2}}|\nabla^2\varphi|_6^{\frac{1}{2}}\leq C|\nabla^2\varphi|_2^{\frac{1}{2}}|\nabla^3\varphi|_2^{\frac{1}{2}}\leq Cc_0\epsilon^{-\frac{1}{4}},\\[8pt]
\displaystyle
|\nabla\varphi|_6\leq C|\nabla^2\varphi|_2\leq Cc_0,\quad 
|\nabla^2\varphi|_6\leq C|\nabla^3\varphi|_2\leq Cc_0\epsilon^{-\frac{1}{2}},
\end{cases}
\end{equation}
which will be frequently used in the following proof.

Using the notations in  \eqref{E:1.34},  now we  show the estimate for $\|W\|_1$.
 \begin{lemma}\label{4}Let $(\varphi, W)$ be the unique strong solution to (\ref{li4}) on $[0,T] \times \mathbb{R}^3$. Then
\begin{equation*}
\begin{split}
\|W(t)\|^2_1+\epsilon\int_0^t |\sqrt{\varphi^{2}+\eta^{2}} \nabla^2 u|^2_2\text{d}s \leq& Cc^2_0,
\end{split}
\end{equation*}
for $0\leq t\leq T_2=\min\{T_1,(1+c_3)^{-4}\}$.
 \end{lemma}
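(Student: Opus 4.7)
\medskip

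\noindent\textbf{Proof plan.} The aim is a combined $L^2$ plus $\dot H^1$ energy estimate on the second line of (\ref{li4}), exploiting the symmetric form of $A_0$ and $A_j(V)$ together with the non-negativity of the degenerate elliptic operator $\epsilon(\varphi^2+\eta^2)\mathbb{L}(\cdot)$. First I would pair $(\ref{li4})_2$ with $W$ in $L^2$. Because $A_0$ is symmetric positive definite with lower bound $a_2>0$ and each $A_j(V)$ is symmetric, the hyperbolic part produces
\[
\frac{1}{2}\frac{d}{dt}\!\int W^\top A_0 W\,dx -\frac{1}{2}\!\int W^\top(\mathrm{div}\mathcal{A}(V))W\,dx,
\qquad \mathcal{A}(V)=(A_1,A_2,A_3),
\]
while integration by parts on $\int\epsilon(\varphi^2+\eta^2)a_1 Lu\cdot u$ yields the good term
\[
a_1\alpha\!\int\epsilon(\varphi^2+\eta^2)|\nabla u|^2+a_1(\alpha+\beta)\!\int\epsilon(\varphi^2+\eta^2)|\mathrm{div}\, u|^2,
\]
which is non-negative by (\ref{10000}), up to a remainder of the form $\int\epsilon\nabla\varphi^2\cdot u\cdot\nabla u$, controlled by $|\varphi|_\infty|\nabla\varphi|_3|u|_6|\nabla u|_2$ using Lemma \ref{f2}. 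The source $\epsilon\int\nabla\varphi^2\cdot a_1 Q(v)\cdot u$ is handled by H\"older and the $v$-bounds in (\ref{jizhu1}).

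\medskip

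Next I would apply $\partial_{x_k}$ to $(\ref{li4})_2$ and pair with $\partial_{x_k}W$, summing over $k=1,2,3$. The hyperbolic part again produces $\tfrac{1}{2}\tfrac{d}{dt}\!\int\partial_k W^\top A_0\partial_k W$ plus commutator integrals bounded by $C\|\nabla V\|_\infty |\nabla W|_2^2\le Cc_3|\nabla W|_2^2$ together with $C|\partial_k A_j(V)|_\infty|\nabla W|_2^2$. The leading elliptic contribution $\int\epsilon(\varphi^2+\eta^2)a_1 L(\partial_k u)\cdot\partial_k u$ gives, after integration by parts, exactly the desired good term $a_1\alpha\epsilon|\sqrt{\varphi^2+\eta^2}\,\nabla\partial_k u|_2^2$ plus a second good term coming from $\mathrm{div}\,\partial_k u$, modulo commutators of the schematic form $\int\epsilon\,\nabla\varphi^2\cdot\nabla\partial_k u\cdot\partial_k u$ and $\int\epsilon\,\partial_k\varphi^2\cdot Lu\cdot\partial_k u$. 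These are absorbed into the good term by Young's inequality, where one uses the interpolation bounds in (\ref{zhen6a}), notably $|\nabla\varphi|_\infty\le Cc_0\epsilon^{-1/4}$ and $|\nabla^2\varphi|_3\le Cc_0\epsilon^{-1/4}$, so that the $\epsilon$-factors compensate. The source derivative $\epsilon\partial_k[\nabla\varphi^2\cdot a_1 Q(v)]\cdot\partial_k u$ is split into $\epsilon\,\nabla^2\varphi^2\cdot Q(v)\cdot\partial_k u$ and $\epsilon\,\nabla\varphi^2\cdot\nabla Q(v)\cdot\partial_k u$, each bounded by polynomials in $(c_0,c_1,c_2,c_3)$ times $|\nabla W|_2$ using the $\varphi$-estimates of Lemma \ref{f2} and the bounds on $v$.

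\medskip

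Summing the $L^2$ and $\dot H^1$ estimates and using the lower bound $\xi^\top A_0\xi\ge a_2|\xi|^2$ gives a differential inequality of the type
\[
\frac{d}{dt}\|W\|_1^2 + C^{-1}\epsilon\bigl|\sqrt{\varphi^2+\eta^2}\,\nabla^2 u\bigr|_2^2 \le P\bigl(c_0,c_1,c_2,c_3\bigr)\bigl(1+\|W\|_1^2\bigr),
\]
with a polynomial $P$ that is at most cubic in $c_3$. Integrating over $[0,t]$ and invoking Gronwall on the restricted interval $0\le t\le T_2=\min\{T_1,(1+c_3)^{-4}\}$ forces $\exp(P\cdot T_2)\le 2$ and $P\cdot T_2\le 1$, so that $\|W(t)\|_1^2\le 2\|W_0\|_1^2\le Cc_0^2$ and the time-integrated good term is likewise bounded by $Cc_0^2$, giving the claimed inequality.

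\medskip

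\noindent\textbf{Main obstacle.} The delicate point is the simultaneous uniformity in $\eta$ and $\epsilon$. Since the elliptic operator $\epsilon(\varphi^2+\eta^2)L$ is degenerate both in $\epsilon$ and where $\varphi$ vanishes, one cannot upgrade to a pure $\epsilon|\nabla^2 u|_2^2$ bound and must keep the weight $\sqrt{\varphi^2+\eta^2}$ throughout. Consequently, every commutator generated by integration by parts with the weighted operator has to be absorbed into the weighted good term with an $\epsilon$-small factor, which is possible only because the interpolation estimates in (\ref{zhen6a}) provide the precise $\epsilon^{-1/4}$-type bounds on $\nabla\varphi$ and $\nabla^2\varphi$, and because the short time scale $T_2\sim(1+c_3)^{-4}$ has been tuned to tame the Gronwall exponent independently of $(\eta,\epsilon)$.
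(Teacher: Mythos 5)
Your proposal is correct and follows essentially the same route as the paper: a zeroth- and first-order energy estimate on $(\ref{li4})_2$ exploiting the symmetry of $A_0$ and $A_j(V)$, integration by parts on the degenerate elliptic term to produce the weighted good term $\epsilon|\sqrt{\varphi^2+\eta^2}\,\nabla\partial^\zeta_x u|_2^2$, absorption of the resulting commutators via Young's inequality and the $\epsilon^{-1/4}$-weighted interpolation bounds of (\ref{zhen6a}), and Gronwall on $[0,T_2]$ with $T_2\sim(1+c_3)^{-4}$ chosen to make the exponent uniform in $(\eta,\epsilon)$. This matches the paper's treatment of the terms $I_1$--$I_6$ in (\ref{zhu101})--(\ref{zhu10222a}).
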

\begin{proof}
 Applying the operator $\partial^\zeta_x$ to $(\ref{li4})_2$, we have
\begin{equation}\label{zhull1}
\begin{split}
&A_0\partial^\zeta_xW_t+\sum_{j=1}^3A_j(V) \partial_j \partial^\zeta_xW+\epsilon (\varphi^2+\eta^2) \mathbb{{L}}(\partial^\zeta_xW)\\
=&
\mathbb{H}( \varphi)  \cdot \partial^\zeta_x\mathbb{{Q}}(V)-\sum_{j=1}^3\Big(\partial^\zeta_x(A_j(V) \partial_j W\big)- A_j(V) \partial_j \partial^\zeta_xW\Big)
\\
&- \epsilon\Big(\partial^\zeta_x((\varphi^2+\eta^2) \mathbb{{L}}(W))-(\varphi^2+\eta^2) \mathbb{{L}}(\partial^\zeta_xW) \Big)\\
&+ \epsilon\Big(\partial^\zeta_x(\mathbb{H}( \varphi)  \cdot\mathbb{{Q}}(V))- \mathbb{H}( \varphi) \cdot \partial^\zeta_x \mathbb{Q}(V)\Big).
\end{split}
\end{equation}
Then multiplying (\ref{zhull1}) by $\partial^\zeta_x W$ on both sides   and integrating  over $\mathbb{R}^3$ by parts,  we have
\begin{equation}\label{zhu101}
\begin{split}
&\frac{1}{2} \frac{d}{dt}\int \big((\partial^\zeta_x W)^\top A_0 \partial^\zeta_x W )+a_1 \epsilon\alpha  | \sqrt{\varphi^2+\eta^2} \nabla \partial^\zeta_x u |^2_2
\\
&\qquad +a_1 \epsilon(\alpha+\beta)|\sqrt{\varphi^2+\eta^2} \text{div} \partial^\zeta_x u |^2_2\\[6pt]
\displaystyle
=&\int(\partial^\zeta_xW)^\top \text{div}A(V)\partial^\zeta_xW+a_1\epsilon \int  \Big(\nabla \varphi^2  \cdot Q(\partial^\zeta_x v)\Big) \cdot \partial^\zeta_x u\\
&- \frac{\delta-1}{\delta}a_1 \epsilon\int \Big(\nabla (\varphi^2+\eta^2) \cdot Q(\partial^\zeta_x u)\Big) \cdot \partial^\zeta_x u\\
&-\sum_{j=1}^3\int  \big(\partial^\zeta_x(A_j(V) \partial_j W\big)- A_j(V) \partial_j \partial^\zeta_xW\big) \cdot \partial^\zeta_x W\\
&-a_1 \epsilon\int \Big( \partial^\zeta_x ((\varphi^2+\eta^2) Lu)-(\varphi^2+\eta^2) L\partial^\zeta_x u \Big)\cdot \partial^\zeta_x u\\
&+a_1 \epsilon\int  \Big(\partial^\zeta_x(\nabla \varphi^2  \cdot Q(v))-\nabla \varphi^2  \cdot  Q(\partial^\zeta_xv)\Big)\cdot \partial^\zeta_x u
:=\sum_{i=1}^{6} I_i.
\end{split}
\end{equation}

Now we consider the terms on the right-hand side of (\ref{zhu101}) when  $|\zeta|\leq 1$. First, it follows  from Lemmas \ref{lem2as}, \ref{f2}, H\"older's inequality and Young's  inequality that 
\begin{equation}\label{szhu102}
\begin{split}
I_1=&\int(\partial_x^\zeta W)^\top\text{div}A(V)\partial_x^\zeta W\\
\leq& C|\nabla V|_\infty |\partial_x^\zeta W|^2_2
\leq C|\nabla V|^{\frac{1}{2}}_6 |\nabla^2 V|^{\frac{1}{2}}_6 |\partial_x^\zeta W|^2_2\\
\leq& C|\nabla^2 V|^{\frac{1}{2}}_2 |\nabla^3 V|^{\frac{1}{2}}_2 |\partial_x^\zeta W|^2_2
\leq Cc_3|\partial_x^\zeta W|^2_2,\\
I_2=&a_1\epsilon \int  \Big(\nabla \varphi^2  \cdot Q(\partial^\zeta_x v)\Big) \cdot \partial^\zeta_x u\\
\leq & C\epsilon |\varphi|_\infty|\nabla \varphi|_3|\nabla \partial^{\zeta}_xv|_6|\partial^\zeta_x u |_2\leq C\epsilon c^3_3|\partial^\zeta_x u |_2, \\
% \end{split}
%\end{equation}
%and
%\begin{equation}\label{szhu102a}
%\begin{split}
I_3=& -\frac{\delta-1}{\delta}a_1 \epsilon\int \Big(\nabla (\varphi^2+\eta^2) \cdot Q(\partial^\zeta_x u)\Big) \cdot \partial^\zeta_x u\\
\leq& C\epsilon|\nabla\varphi|_\infty |\varphi \nabla \partial^\zeta_x u |_2|\partial_x^\zeta u|_2\\
%\leq &C\epsilon^{\frac{3}{4}}|\varphi \nabla \partial^\zeta_x u |_2|\partial_x^\zeta u|_2\\
\leq & \frac{a_1 \epsilon \alpha}{20}|\sqrt{\varphi^2+\eta^2} \nabla \partial^\zeta_x u |^2_2+Cc^2_0\epsilon^{\frac{1}{2}} |\partial_x^\zeta u|^2_2,\\
I_{4}=&-\sum_{j=1}^3\int\Big(\partial_x^\zeta (A_j(V)\partial_j W)-A_j(V)\partial_j\partial_x^\zeta W\Big)\partial_x^\zeta W\\
\leq& C|\partial_x^\zeta (A_j(V)\partial_j W)-A_j(V)\partial_j\partial_x^\zeta W|_2|\partial_x^\zeta W|_2\\
\leq& C|\nabla V|_\infty|\nabla W|^2_2\leq Cc_3|\nabla W|^2_2,\\
\end{split}
\end{equation}
where we have used the fact \eqref{zhen6a}.
%\begin{equation}\label{wuqionga}
%|\nabla^2\varphi|_6\leq C|\nabla^2\varphi|_{D^1}\leq C|\nabla^3\varphi|_2\leq Cc_0\epsilon^{-\frac{1}{2}},
%\end{equation}
%and
%\begin{equation}\label{wuqiong}
%|\nabla\varphi|_\infty\leq C|\nabla\varphi|^{\frac{1}{2}}_6 |\nabla^2 \varphi|^{\frac{1}{2}}_6\leq C|\nabla^2\varphi|^{\frac{1}{2}}_2 |\nabla^3 %\varphi|^{\frac{1}{2}}_2\leq Cc_0\epsilon^{-\frac{1}{4}}.
%\end{equation}

Similarly, for the terms $I_{5}$-$I_{6}$, using \eqref{zhen6a}, one can  obtain that 

\begin{equation}\label{szhu10w}
\begin{split}
I_{5}=&-a_1 \epsilon\int \Big( \partial^\zeta_x ((\varphi^2+\eta^2) Lu)-(\varphi^2+\eta^2) L\partial^\zeta_x u \Big)\cdot \partial^\zeta_x u \\
\leq& C\epsilon |\nabla \varphi|_\infty |\varphi Lu|_2 |\partial^\zeta_x u|_2\\
\leq & \frac{a_1 \epsilon \alpha}{20}|\sqrt{\varphi^2+\eta^2} \nabla^2  u |^2_2+Cc^2_0\epsilon^{\frac{1}{2}}  |\partial^\zeta_x u|^2_2,\\
I_{6}=&a_1 \epsilon\int  \Big(\partial^\zeta_x(\nabla \varphi^2  \cdot Q(v))-\nabla \varphi^2  \cdot  Q(\partial^\zeta_xv)\Big)\cdot \partial^\zeta_x u\\
\leq & C\epsilon \big( |\varphi|_\infty|\nabla v|_\infty|\nabla^2 \varphi|_2+|\nabla \varphi|_6|\nabla \varphi|_3|\nabla v|_\infty\big)|\partial^\zeta_x u|_2\\
\leq & Cc^3_3\epsilon |\partial^\zeta_x u|_2\leq  Cc^3_3\epsilon+ Cc^3_3\epsilon|\partial^\zeta_x u|^2_2.
\end{split}
\end{equation}

Then from (\ref{zhu101})-(\ref{szhu10w}), we have
\begin{equation}\label{zhu10r}
\begin{split}
&\frac{1}{2} \frac{d}{dt}\int \big((\partial^\zeta_x W)^\top A_0 \partial^\zeta_x W )+\frac{1}{2}a_1 \epsilon\alpha  | \sqrt{\varphi^2+\eta^2} \nabla \partial^\zeta_x u |^2_2
\leq C\big(c_3^2+c_3^4\epsilon\big)\|W\|^2_1+Cc_3^2\epsilon,
\end{split}
\end{equation}
which, along with Gronwall's inequality, implies that 
\begin{equation}\label{zhu10222a}
\begin{split}
&\|W(t)\|^2_1+\epsilon\int_0^t |\sqrt{\varphi^{2}+\eta^{2}} \nabla^2 u|^2_2\text{d}s
  \leq C \big( \|W_0\|^2_{1}+c_3^2\epsilon t\big)\exp (C(c^2_3+c_3^4\epsilon)t)\leq Cc^{2}_0,
\end{split}
\end{equation}
for $0\leq t\leq T_2=\min\{T_1,(1+c_3)^{-4}\}$.

\end{proof}

Next we show the estimate for $|W|_{D^2}$.
 \begin{lemma}\label{s6}Let $(\varphi, W)$ be the unique strong solution to (\ref{li4}) on $[0,T] \times \mathbb{R}^3$. Then
\begin{equation*}
\begin{split}
|W(t)|^2_{D^2}+\epsilon\int_0^t |\sqrt{\varphi^{2}+\eta^{2}} \nabla^3 u|^2_2\text{d}s  \leq&  Cc^{2}_0,  ,\quad \text{for} \quad 0 \leq t \leq T_2; \\
 |W_t(t)|^2_{2}+|\phi_t(t)|^2_{D^1}+\int_0^t |\nabla u_t|^2_2 \text{d}s\leq & Cc^6_2,\quad \text{for} \quad 0 \leq t \leq T_2.
\end{split}
\end{equation*}
 \end{lemma}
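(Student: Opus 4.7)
The plan is to repeat the energy method used in Lemma \ref{4} but now applied to second-order spatial derivatives of $W$, and then to read off the bounds on $W_t$ and $\nabla u_t$ either directly from the equation or from its time derivative.

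\textbf{Step 1: the $|W|_{D^2}$ estimate.} I would apply $\partial^\zeta_x$ with $|\zeta|=2$ to $(\ref{li4})_2$, multiply by $\partial^\zeta_x W$ and integrate by parts, producing an identity of exactly the same form as (\ref{zhu101}) with six terms $I_1,\ldots,I_6$. The two extra derivatives must be distributed through the commutators via the Moser-type estimates in Lemma \ref{zhen1}. In every resulting product at least one factor carries at most two derivatives, so H\"older's inequality together with the interpolation bounds (\ref{zhen6a}), the a priori hypothesis (\ref{jizhu1}) on $V$, and Lemma \ref{f2} controls each term by $Cc_3^2|W|_{D^2}^2$ up to $\epsilon$-weighted remainders. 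The delicate terms are those containing $\nabla^3\varphi$, $\nabla^3 v$, or $\nabla^2\varphi$ paired with a high derivative of $u$: for these I would place the top-order factor in $L^2$ against the elliptic weight $\sqrt{\varphi^2+\eta^2}$, pair it with an $\epsilon^{1/2}$ companion, and absorb into $\tfrac{1}{2}a_1\epsilon\alpha|\sqrt{\varphi^2+\eta^2}\,\nabla^3 u|_2^2$ on the left, exactly as in (\ref{szhu10w}), using $|\nabla^2\varphi|_3\le Cc_0$ and $\epsilon|\varphi|^2_{D^3}\le Cc_0^2$ from Lemma \ref{f2}. After these absorptions, the differential inequality takes the form
\begin{equation*}
\frac{d}{dt}|W(t)|_{D^2}^2+a_1\epsilon\alpha|\sqrt{\varphi^2+\eta^2}\,\nabla^3 u|_2^2\le C\big(c_3^2+c_3^4\epsilon\big)|W|_{D^2}^2+Cc_3^4\epsilon,
\end{equation*}
and Gronwall on $[0,T_2]$ with $T_2=\min\{T_1,(1+c_3)^{-4}\}$ (so $c_3^4 T_2\le 1$) closes the bound by $Cc_0^2$.

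\textbf{Step 2: the $|W_t|_2$ and $|\phi_t|_{D^1}$ estimates.} Since $A_0$ is positive definite by (\ref{sseq:5.3qq}), the identity
\begin{equation*}
A_0 W_t=-\sum_{j=1}^3 A_j(V)\partial_j W-\epsilon(\varphi^2+\eta^2)\mathbb{L}(W)+\epsilon\mathbb{H}(\varphi)\cdot\mathbb{Q}(V)
\end{equation*}
expresses $W_t$ algebraically in terms of quantities already controlled. Using Step 1, Lemma \ref{f2} and the hypotheses on $V$, each factor on the right is bounded in $L^2$ by $Cc_2^3$. The scalar component of this identity gives an equation for $\phi_t$ involving only first spatial derivatives of $W$; differentiating once and repeating the $L^2$ estimate with (\ref{zhen6a}) and the bound on $|\phi|_{D^2}$ just obtained yields $|\phi_t|_{D^1}\le Cc_2^3$.

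\textbf{Step 3: the $\int_0^t|\nabla u_t|_2^2\,\dif s$ estimate.} I would differentiate the $u$-component of $(\ref{li4})_2$ in time and test against $u_t$. Integration by parts against the elliptic term produces
\begin{equation*}
\epsilon\int(\varphi^2+\eta^2)\bigl(\alpha|\nabla u_t|^2+(\alpha+\beta)|\text{div}\,u_t|^2\bigr)\,\dif x
\end{equation*}
plus commutator terms of the form $\epsilon\int\nabla(\varphi^2+\eta^2)\otimes u_t\cdot\nabla u_t\,\dif x$, absorbable by Young's inequality thanks to the bound on $|\nabla\varphi|_\infty$ in (\ref{zhen6a}). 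The remaining source contributions $(A_j(V))_t\partial_j W$, $(\varphi^2+\eta^2)_t Lu$ and $(\mathbb{H}(\varphi))_t\cdot\mathbb{Q}(V)$ are controlled via the bounds on $\varphi_t,\psi_t,v_t$ from Lemma \ref{f2} and (\ref{jizhu1}). Since the coefficient $\varphi^2+\eta^2$ is bounded below by the (positive) artificial viscosity $\eta^2$, time integration on $[0,T_2]$ delivers the stated $L^2_tL^2_x$ bound on $\nabla u_t$. The principal technical obstacle throughout all three steps is the $\epsilon^{-1/2}$ loss visible in $|\nabla^2\varphi|_6$ and $|\nabla\varphi|_\infty$ from (\ref{zhen6a}); at each appearance this loss must be absorbed by an $\epsilon^{1/2}$ factor originating from either the elliptic weight $\epsilon(\varphi^2+\eta^2)L$ or the lower-order source $\epsilon\mathbb{H}(\varphi)\cdot\mathbb{Q}(V)$, which is precisely the role of the ``quasi-symmetric hyperbolic''--``degenerate elliptic'' structure (\ref{li46}).
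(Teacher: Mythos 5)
Your Steps 1 and 2 follow the paper's argument: the $|W|_{D^2}$ bound is obtained exactly as you describe, by running the $|\zeta|=2$ case of the energy identity (\ref{zhu101}) and absorbing the top-order terms into the weighted dissipation, and the bounds on $|W_t|_2$ and $|\phi_t|_{D^1}$ come from reading $W_t$ off the equation algebraically.

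Step 3, however, has a genuine gap. Testing the time-differentiated momentum equation against $u_t$ produces only the \emph{degenerate, $\epsilon$-weighted} dissipation $\epsilon\int(\varphi^2+\eta^2)\bigl(\alpha|\nabla u_t|^2+(\alpha+\beta)|\mathrm{div}\,u_t|^2\bigr)$, and this cannot be converted into the unweighted bound $\int_0^t|\nabla u_t|_2^2\,\dif s\leq Cc_2^6$ required by the lemma: invoking the lower bound $\varphi^2+\eta^2\geq\eta^2$ costs a factor $(\epsilon\eta^2)^{-1}$, which destroys the uniformity in $(\eta,\epsilon)$ that the whole construction depends on (and which is essential when passing to the limit $\eta\to0$ in Section 3.3). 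The paper avoids this entirely: it solves $(\ref{li4})_2$ algebraically for $u_t$, estimates pointwise in time
\begin{equation*}
|u_t|_{D^1}\leq Cc_2^{3}+Cc_0\,\epsilon\,|\sqrt{\varphi^2+\eta^2}\,\nabla^3 u|_2,
\end{equation*}
where the dangerous contribution $\epsilon(\varphi^2+\eta^2)\nabla^3u$ is split as $\epsilon|\sqrt{\varphi^2+\eta^2}|_\infty\cdot|\sqrt{\varphi^2+\eta^2}\nabla^3u|_2$, and then squares and integrates in time, using the dissipation estimate $\epsilon\int_0^t|\sqrt{\varphi^2+\eta^2}\,\nabla^3 u|_2^2\,\dif s\leq Cc_0^2$ already secured in your Step 1. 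That is the mechanism that yields the unweighted $L^2_tD^1$ control of $u_t$; no time differentiation of the equation is needed at this stage.
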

\begin{proof}We divide the proof into two steps.

\underline{Step\ 1}: the estimate of $|W|_{D^2}$. First we need to  consider the terms on the right-hand side of (\ref{zhu101}) when  $|\zeta| =2$. It follows from Lemmas \ref{lem2as} and  \eqref{zhu101},  H\"older's inequlaity and Young's  inequality that\begin{equation}\label{szhu102as}
\begin{split}
I_1=&\frac{1}{2}\int\text{div}A(V)|\partial_x^\zeta W|^2
\leq C|\text{div}A(V)|_\infty |\partial_x^\zeta W|^2_2
\leq Cc_3|\partial_x^\zeta W|^2_2,\\
I_2=&a_1\epsilon \int  \Big(\nabla \varphi^2  \cdot Q(\partial^\zeta_x v)\Big) \cdot \partial^\zeta_x u
\leq  C\epsilon |\nabla \varphi|_3|\nabla \partial^{\zeta}_xv|_2| \varphi \partial^\zeta_x u |_6\\
\leq & C\epsilon c^2_3\big(c_0\epsilon^{-\frac{1}{4}} |\partial^\zeta_x u |_2+| \varphi \nabla \partial^\zeta_x u|_2\big) \\
%\leq & \frac{a_1 \epsilon \alpha}{20}|\sqrt{\varphi^2+\eta^2} \nabla^3  u |^2_2+Cc^3_3\epsilon^{\frac{3}{4}}  |\partial^\zeta_x u|_2+C %c^4_3\epsilon,\\
\leq & \frac{a_1 \epsilon \alpha}{20}|\sqrt{\varphi^2+\eta^2} \nabla^3  u |^2_2+Cc^2_3\epsilon^{\frac{1}{2}}  |\partial^\zeta_x u|^2_2+C c^4_3\epsilon,\\
\end{split}
\end{equation}
where we have used \eqref{zhen6a}
%\begin{equation}\label{zhu10rasa}
%|\nabla \varphi|_3\leq C|\nabla\varphi|_2^{\frac{1}{2}}|\nabla\varphi|_6^{\frac{1}{2}}\leq Cc_0,
%\end{equation}
and
\begin{equation}\label{ghbu}\begin{split}
|\varphi\partial^\zeta_x u|_6\leq& C|\varphi\partial^\zeta_x u|_{D^1}
\leq C\big( |\varphi \nabla \partial^\zeta_x u|_2+|\nabla \varphi|_\infty |\partial^{\zeta}_xu|_2 \big)\\
\leq& C\big( |\varphi \nabla \partial^\zeta_x u|_2+c_0\epsilon^{-\frac{1}{4}} |\partial^{\zeta}_xu|_2 \big).\\
\end{split}\end{equation}

For the term $I_3$, via integration by parts and  \eqref{zhen6a}, one has
\begin{equation}
\label{szhu102aas}\begin{split}
I_3=& -\frac{\delta-1}{\delta}a_1 \epsilon\int \Big(\nabla (\varphi^2+\eta^2) \cdot Q(\partial^\zeta_x u)\Big) \cdot \partial^\zeta_x u\\
\leq& C\epsilon|\nabla\varphi|_\infty |\varphi \nabla \partial^\zeta_x u |_2|\partial_x^\zeta u|_2\\
\leq & \frac{a_1 \epsilon \alpha}{20}|\sqrt{\varphi^2+\eta^2} \nabla \partial^\zeta_x u |^2_2+Cc^2_0\epsilon^{\frac{1}{2}} |\partial_x^\zeta u|^2_2.
 \end{split}
 \end{equation}
 
For the term $I_4$, one gets
\begin{equation}\label{szhu102bas}\begin{split}
I_{4}=&-\sum_{j=1}^3\int\Big(\partial_x^\zeta (A_j(V)\partial_j W)-A_j(V)\partial_j\partial_x^\zeta W\Big)\partial_x^\zeta W\\
\leq& C|\partial_x^\zeta (A_j(V)\partial_j W)-A_j(V)\partial_j\partial_x^\zeta W|_2|\partial_x^\zeta W|_2\\
\leq & C|\nabla V|_\infty|\nabla^2 W|^2_2+C|\nabla^2 V|_3|\nabla W|_6|\partial_x^\zeta W|_2
\leq Cc_3|\nabla^2 W|^2_2.
\end{split}
\end{equation}

For the terms $I_{5}$-$I_6$, using \eqref{zhen6a}and \eqref{ghbu}, one has 
\begin{equation}\label{szhu10was}
\begin{split}
I_{5}=&-a_1 \epsilon\int \Big( \partial^\zeta_x ((\varphi^2+\eta^2) Lu)-(\varphi^2+\eta^2) L\partial^\zeta_x u \Big)\cdot \partial^\zeta_x u \\
\leq& C\epsilon\big( |\varphi \nabla Lu|_2+|\nabla \varphi|_\infty|Lu|_2 \big) |\nabla \varphi|_\infty|\partial^\zeta_x u|_2+C\epsilon |\nabla^2 \varphi|_3|Lu|_2|\varphi \partial^\zeta_x u|_6\\
\leq& C c_0\epsilon^{\frac{3}{4}}\Big(\big( |\varphi \nabla Lu|_2+c_0\epsilon^{-\frac{1}{4}}|Lu|_2 \big)|\partial^\zeta_x u|_2
+\big( |\varphi \nabla \partial^\zeta_x u|_2+c_0\epsilon^{-\frac{1}{2}} |\partial^\zeta_x u|_2 \big)|Lu|_2\Big)\\
\leq & \frac{a_1 \epsilon \alpha}{20}|\sqrt{\varphi^2+\eta^2} \nabla^3  u |^2_2+Cc^2_0\epsilon^{\frac{1}{2}}  |\nabla^2 u|^2_2,\\
%\end{split}
%\end{equation}
%and
%\begin{equation}\label{szhu10fv}
%\begin{split}
I_{6}=&a_1 \epsilon\int  \Big(\partial^\zeta_x(\nabla \varphi^2  \cdot Q(v))-\nabla \varphi^2  \cdot  Q(\partial^\zeta_xv)\Big)\cdot \partial^\zeta_x u\\
\leq & C\epsilon \big( |\varphi|_\infty|\nabla v|_\infty|\nabla^3 \varphi|_2+|\nabla \varphi|_\infty |\nabla^2 \varphi|_2|\nabla v|_\infty\big)|\partial^\zeta_x u|_2\\
&+C\epsilon  |\nabla \varphi|_\infty |\nabla \varphi|_3|\nabla^2 v|_6|\partial^\zeta_x u|_2+C\epsilon|\nabla^2 \varphi|_2|\nabla^2 v|_3|\varphi\partial^\zeta_x u|_6\\
%\leq &C\epsilon \big( |\varphi|_\infty|\nabla v|_\infty|\nabla^3 \varphi|_2+ |\nabla \varphi|_\infty |\nabla^2 \varphi|_2|\nabla %v|_\infty\big)|\partial^\zeta_x u|_2\\
%&+C\epsilon   |\nabla \varphi|_\infty |\nabla \varphi|_3|\nabla^2 v|_6|\partial^\zeta_x u|_2
%+ C\epsilon\big(|\varphi \nabla \partial^\zeta_x u|_2+c_0\epsilon^{-\frac{1}{2}} |\partial^{\zeta}_xu|_2  \big)|\nabla^2 \varphi|_2|\nabla^2 v|_3\\
\leq & \frac{a_1 \epsilon \alpha}{20}|\sqrt{\varphi^2+\eta^2} \nabla^3  u |^2_2+Cc^2_3 \epsilon^{\frac{1}{2}} |\nabla^2 u|^2_2+Cc^4_3\epsilon.
\end{split}
\end{equation}

Then from (\ref{zhu101}) and (\ref{szhu102as})-(\ref{szhu10was}), we have
\begin{equation}\label{zhu10ras}
\begin{split}
&\frac{1}{2} \frac{d}{dt}\int \big((\partial^\zeta_x W)^\top A_0 \partial^\zeta_x W )+\frac{1}{2}a_1 \epsilon\alpha  | \sqrt{\varphi^2+\eta^2} \nabla \partial^\zeta_x u |^2_2\\
\leq &Cc^2_3\big(1+\epsilon)|W|^2_{D^2}+Cc^4_3\epsilon,
\end{split}
\end{equation}
which, along with Gronwall's inequaltiy, implies that 
\begin{equation}\label{zhu10222as}
\begin{split}
&|W(t)|^2_{D^2}+\epsilon\int_0^t |\sqrt{\varphi^{2}+\eta^{2}} \nabla\partial_x^\zeta u|^2_2\text{d}s\\
  \leq&C\big(  |W_0|^2_{D^2}+c^4_3\epsilon t\big)\exp (Cc^2_3(1+\epsilon)  t)\leq Cc^{2}_0,\quad \text{for} \quad  0\leq t\leq T_2.
\end{split}
\end{equation}

\underline{Step\ 2}: the estimate for $W_t$.  First, the estimate for $\phi_t$ follows  from:
\begin{equation}\label{time}\phi_t=-v\cdot \nabla \phi-\frac{\gamma-1}{2}\psi\text{div} u.\end{equation}
For $0\leq t \leq T_2$, we easily have
\begin{equation}\label{kaifa}
\begin{split}
|\phi_t(t)|_2\leq & C\big(|v(t)|_6|\nabla \phi(t)|_3+|\psi(t)|_6|\text{div} u(t)|_3\big)\leq Cc^2_1,\\
|\phi_t(t)|_{D^1}\leq & C\big(|v(t)|_\infty|\nabla^2 \phi(t)|_{2}+|\nabla v(t)|_6|\nabla \phi(t)|_3+|\psi(t)|_\infty|\nabla^2 u(t)|_{2}\big)\\
&+C|\nabla u(t)|_6|\nabla \psi(t)|_3\leq Cc^2_2.
\end{split}
\end{equation}

Second, we consider the estimate for $|\partial^\zeta_x u_t|_2$ when $|\zeta|\leq 1$.
From the relation:
$$u_t+v\cdot\nabla u +\frac{2A\gamma}{\gamma-1} \psi \nabla \phi+ \epsilon(\varphi^2+\eta^2) Lu=\epsilon\nabla \varphi^2 \cdot Q(v),$$
one can obtain 
\begin{equation}\label{zhu1019ss}
\begin{split}
|u_t|_2=&\Big |v\cdot\nabla u +\frac{2A\gamma}{\gamma-1} \psi \nabla \phi+\epsilon (\varphi^2+\eta^2) Lu-\epsilon\nabla \varphi^2 \cdot Q(v)\Big|_2\\
\leq &C\Big(|v|_6 |\nabla u|_3+|\psi|_6|\nabla\phi|_3 +\epsilon|\varphi^2+\eta^2|_\infty|u|_{D^2}+\epsilon|\varphi|_\infty|\nabla\varphi|_\infty|\nabla v|_2\Big)
\leq Cc^{3}_1.
\end{split}
\end{equation}
Similarly, for $|u_t|_{D^1}$, using \eqref{ghbu}, we  have
\begin{equation}\label{zhu1020ss}
\begin{split}
|u_t|_{D^1}=& \Big|v\cdot\nabla u +\frac{2A\gamma}{\gamma-1} \psi \nabla \phi+\epsilon (\varphi^2+\eta^2) Lu-\epsilon \nabla \varphi^2 \cdot Q(v)\Big|_{D^1}\\
\leq & C\|v\|_2\|\nabla u\|_1+C|\psi|_\infty|\nabla^2\phi|_2+C|\nabla\psi|_3|\nabla\phi|_6\\
&+C\epsilon|\sqrt{\phi^2+\eta^2}|_\infty|\sqrt{\phi^2+\eta^2} \nabla^3 u|_2+C\epsilon|\varphi \nabla^2 u|_6 |\nabla \varphi|_3+C\epsilon\|\varphi\|^2_2\| v\|_2\\
\leq&  Cc^{3}_2+Cc_0\epsilon|\sqrt{\phi^2+\eta^2} \nabla^3 u|_2,
\end{split}
\end{equation}
which  implies that
$$
\int_{0}^t |u_t|^2_{D^1} \text{d}s \leq C\int_0^t \big(c^{3}_2+c^2_0\epsilon|\sqrt{\phi^2+\eta^2} \nabla^3 u|^2_2\big) \text{d}s\leq Cc^{4}_2,\quad
\text{for} \quad  0\leq t \leq T_2.
$$
\end{proof}

Finally, we give the estimates on the  highest order  terms: $\nabla^3 W$ and $\varphi \nabla^4 u$.
 \begin{lemma}\label{s7}Let $(\varphi, W)$ be the unique strong solution to (\ref{li4}) on $[0,T] \times \mathbb{R}^3$. Then
\begin{equation*}
\begin{split}
| W(t)|^2_{D^3}+\epsilon\int_0^t |\sqrt{\varphi^{2}+\eta^{2}} \nabla^4 u|^2_2\text{d}s  \leq&  Cc^{4}_0,  \quad \text{for} \quad 0 \leq t \leq T_2;   \\
|u_t(t)|^2_{D^1}+|\phi_t(t)|^2_{D^2}+\int_0^t |\nabla^2 u_t|^2_2 \text{d}s\leq & Cc^6_3,\quad \text{for} \quad 0 \leq t \leq T_2.
\end{split}
\end{equation*}
 \end{lemma}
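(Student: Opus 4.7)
The approach is to adapt the energy method of Lemmas \ref{4} and \ref{s6} to the third-order case $|\zeta|=3$, and then to extract the time-derivative bounds from the momentum equation. First I apply $\partial^\zeta_x$ to $(\ref{li4})_2$ with $|\zeta|=3$, pair it with $\partial^\zeta_x W$ in $L^2(\mathbb{R}^3)$, and integrate by parts. As in (\ref{zhu101}), the symmetry of $A_0$ and $A_j$ absorbs the principal hyperbolic part, the degenerate Lam\'e operator yields the dissipation $a_1\epsilon\alpha|\sqrt{\varphi^2+\eta^2}\nabla\partial^\zeta_x u|_2^2 + a_1\epsilon(\alpha+\beta)|\sqrt{\varphi^2+\eta^2}\mathrm{div}\,\partial^\zeta_x u|_2^2$, and the right-hand side consists of six commutator or source terms $I_1,\ldots,I_6$ identical in form to those in (\ref{zhu101}).

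The main work is to estimate each $I_i$. The lower-order commutators $I_1$--$I_4$ are handled by Lemma \ref{zhen1} and the interpolations in (\ref{zhen6a}), contributing at most $Cc_3\|\nabla^3 W\|_2^2 + Cc_3^4\epsilon$. The delicate pieces are $I_5$ (commutator of $(\varphi^2+\eta^2)L$ with $\partial^\zeta_x$) and $I_6$ (involving $\nabla\varphi^2\cdot Q(v)$). For $I_5$ the dominant piece $\epsilon\int\nabla(\varphi^2)\cdot\nabla^2 Lu\cdot\nabla^3 u$ contains a $\nabla^4 u$ factor; I bound it by $C\epsilon|\nabla\varphi|_\infty|\sqrt{\varphi^2+\eta^2}\nabla^4 u|_2|\nabla^3 u|_2$ using $|\nabla\varphi|_\infty\leq Cc_0\epsilon^{-1/4}$ from (\ref{zhen6a}), and absorb a quarter of the dissipation via Young's inequality. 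The piece $\epsilon\int\nabla^3(\varphi^2)\cdot Lu\cdot\nabla^3 u$ uses $\epsilon^{1/2}|\nabla^3\varphi|_2 \leq Cc_0$ from Lemma \ref{f2}, paired with $|Lu|_3$ interpolated through Gagliardo--Nirenberg. The term $I_6$ is treated analogously, using $\epsilon|\nabla^3\varphi|_2|\nabla v|_\infty$ controlled by Lemmas \ref{f2} and \ref{s6}. After all $\epsilon^{1/2}$ residues are accounted for, Gronwall's inequality on $[0,T_2]$ produces
$$|W(t)|_{D^3}^2 + \epsilon\int_0^t|\sqrt{\varphi^2+\eta^2}\nabla^4 u|_2^2\,\mathrm{d}s \leq Cc_0^4.$$

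For the time derivatives, $|\phi_t|_{D^2}$ follows directly by applying $\nabla^2$ to the transport relation $\phi_t=-v\cdot\nabla\phi-\frac{\gamma-1}{2}\psi\,\mathrm{div}\,u$ and using the $H^3$ bounds for $v,\psi,\phi,u$ from Lemmas \ref{f2}--\ref{s6} together with Step 2. For $|u_t|_{D^1}$ I express $u_t$ via the momentum equation and differentiate once; the hardest term $\epsilon(\varphi^2+\eta^2)\nabla Lu$ is bounded pointwise in $t$ by $\epsilon|\sqrt{\varphi^2+\eta^2}|_\infty|\sqrt{\varphi^2+\eta^2}\nabla^3 u|_2$, uniformly controlled through Lemma \ref{s6}. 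For $\int_0^t|\nabla^2 u_t|_2^2\,\mathrm{d}s$ the top-order contribution $\epsilon(\varphi^2+\eta^2)\nabla^4 u$ is bounded in $L^2_tL^2_x$ using the dissipation just established, since $\epsilon^2\int|\varphi^2+\eta^2|_\infty^2|\nabla^4 u|_2^2\,\mathrm{d}s \leq C\epsilon\cdot\epsilon\int|\sqrt{\varphi^2+\eta^2}\nabla^4 u|_2^2\,\mathrm{d}s \leq Cc_0^4\epsilon$, while all remaining terms are pointwise of order $Cc_3^3$.

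The principal obstacle is the $\epsilon$-accounting in $I_5$: the singular coefficient $|\nabla\varphi|_\infty\leq Cc_0\epsilon^{-1/4}$ and the top-order derivative $\nabla^4 u$ appear simultaneously, and the degenerate dissipation $\epsilon|\sqrt{\varphi^2+\eta^2}\nabla^4 u|_2^2$ is the only available sink. Ensuring that each such contribution carries enough powers of $\epsilon^{1/2}$ to land on the small-coefficient side of Young's inequality is exactly what produces estimates independent of $(\eta,\epsilon)$, and this uniformity is the mechanism underpinning the vanishing-viscosity limit of Theorem \ref{th2}.
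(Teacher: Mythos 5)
Your overall architecture (third-order energy identity with the six terms $I_1,\dots,I_6$ of \eqref{zhu101}, absorption of every $\nabla^4 u$ occurrence into the degenerate dissipation, then reading off the time-derivative bounds from the momentum and transport equations) is the same as the paper's, and your Step~2 for $|\phi_t|_{D^2}$, $|u_t|_{D^1}$ and $\int_0^t|\nabla^2 u_t|_2^2$ matches \eqref{kaifa2}--\eqref{zhu1020ss1}. However, there is a genuine gap in your treatment of $I_6$. For $|\zeta|=3$ the Leibniz expansion of $\partial^\zeta_x(\nabla\varphi^2\cdot Q(v))-\nabla\varphi^2\cdot Q(\partial^\zeta_x v)$ contains the top-order term $\int \nabla\partial^\zeta_x\varphi^2\cdot Q(v)\cdot\partial^\zeta_x u$, i.e.\ \emph{four} derivatives landing on $\varphi^2$. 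No a priori bound controls $\nabla^4(\varphi^2)$: the available information is only $\|\varphi\|_2\le Cc_0$ and $\epsilon|\varphi|_{D^3}^2\le Cc_0^2$, so this term cannot be estimated "analogously" to the pieces carrying $\epsilon|\nabla^3\varphi|_2|\nabla v|_\infty$, which is all your sketch accounts for. The paper isolates this term as $I_{63}$ in \eqref{szhu1015} and integrates by parts to shift one derivative onto $Q(v)\cdot\partial^\zeta_x u$; this in turn produces a bare $\nabla^4 u$ that is only admissible when weighted by $\varphi$, so one must further expand $\nabla^3(\varphi^2)=2\varphi\nabla^3\varphi+\cdots$ (the decomposition $I_A+I_B+I_C+I_D$ in \eqref{zhu1016kl}): the leading piece $\varphi\nabla^3\varphi\cdot Q(v)\cdot\nabla^4 u$ carries the needed weight and is absorbed into the dissipation, while the products of lower-order derivatives of $\varphi$ are closed by the interpolations \eqref{zhen6a}. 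Without this integration by parts and weight-tracking step the $D^3$ estimate does not close, so you should add it explicitly.

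A minor secondary point: in $I_5$ the piece $\epsilon\int\nabla^3(\varphi^2)\cdot Lu\cdot\nabla^3 u$ cannot be closed by putting $\nabla^3 u$ in $L^6$ (it is only in $L^2$); you must again extract the factor $\varphi$ from $\nabla^3(\varphi^2)$ and use $|\varphi\nabla^3 u|_6\le C(|\varphi\nabla^4 u|_2+\cdots)$ as in \eqref{ghbu}, feeding the result back into the dissipation. Your sketch is ambiguous on this pairing.
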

\begin{proof}  We divide the proof into two steps.

\underline{Step\ 1}: the estimate of $|W|_{D^3}$. First we need to  consider the terms on the right-hand side of (\ref{zhu101}) when  $|\zeta| =3$. For the term $I_1$, it is easy to see that
\begin{equation}\label{szhu102}
\begin{split}
I_1=&\int(\partial_x^\zeta W)^\top\text{div}A(V)\partial_x^\zeta W\\
\leq& C|\text{div}A(V)|_\infty |\partial_x^\zeta W|^2_2\leq Cc_3|\partial_x^\zeta W|^2_2.
\end{split}
\end{equation}

For the terms $I_2$-$I_3$, via integration by parts, \eqref{zhen6a} and \eqref{ghbu},   one has
\begin{equation}\label{szhu102ed}
\begin{split}
I_2=&a_1\epsilon \int  \Big(\nabla \varphi^2  \cdot Q(\partial^\zeta_x v)\Big) \cdot \partial^\zeta_x u\\
\leq & C\epsilon \int \Big(|\nabla^2 \varphi^2| |\nabla^3 v| |\nabla^3 u|+|\nabla \varphi^2| |\nabla^3 v| |\nabla^4 u|\Big)\\
\leq & C\epsilon  |\nabla^3 v|_2\big(|\nabla \varphi|^2_\infty|\nabla^3 u|_2+|\nabla \varphi|_\infty| \varphi \nabla^4 u |_2+|\nabla^2 \varphi|_3|\varphi \nabla^3 u|_6\big)\\
%\leq & C\epsilon |\nabla \varphi|_\infty |\nabla^3 v|_2\big(|\nabla \varphi|_\infty|\nabla^3 u|_2+| \varphi \nabla^4 u |_2\big)\\
%&+C\epsilon  |\nabla^3 v|_2|\nabla^2 \varphi|^{\frac{1}{2}}_2|\nabla^2 \varphi|^{\frac{1}{2}}_6\big( |\varphi \nabla^4 u|_2+|\nabla \varphi|_\infty %|\nabla^3 u|_2 \big)\\
\leq & \frac{a_1 \epsilon \alpha}{20}|\sqrt{\varphi^2+\eta^2} \nabla^4  u |^2_2+Cc^4_3\epsilon^{\frac{1}{2}}  |\nabla^3 u|^2_2+C c^4_3\epsilon^{\frac{1}{2}},\\
%\end{split}
%\end{equation}
%where we used the fact \eqref{zhen6a}, \eqref{ghbu}.
%\begin{equation}\label{E:zhu10}
%|\nabla^2\varphi|_3\leq C|\nabla^2\varphi|_2^{\frac{1}{2}}|\nabla^2\varphi|_6^{\frac{1}{2}}\leq %C|\nabla^2\varphi|_2^{\frac{1}{2}}|\nabla^3\varphi|_2^{\frac{1}{2}}\leq C c_0\epsilon^{-\frac{1}{4}}.
%\end{equation}
%For the term $I_3$, via integration by part, we have
%\begin{equation}
%\label{szhu102qe}\begin{split}
I_3=& -\frac{\delta-1}{\delta}a_1 \epsilon\int \Big(\nabla (\varphi^2+\eta^2) \cdot Q(\partial^\zeta_x u)\Big) \cdot \partial^\zeta_x u\\
\leq& C\epsilon|\nabla\varphi|_\infty |\varphi \nabla \partial^\zeta_x u |_2|\partial_x^\zeta u|_2\\
\leq & \frac{a_1 \epsilon \alpha}{20}|\sqrt{\varphi^2+\eta^2} \nabla^4_x u |^2_2+Cc^2_0\epsilon^{\frac{1}{2}} |\nabla^3 u|^2_2.
 \end{split}
\end{equation}

For the term $I_4$,   letting
$r=b=2$, $a=\infty$, $f=A_j$, $g=\partial_j W$
 in (\ref{ku11}) of Lemma \ref{zhen1},  one can obtain 
\begin{equation}\label{szhu102qe}\begin{split}
I_{4}=&-\sum_{j=1}^3\int\Big(\partial_x^\zeta (A_j(V)\partial_j W)-A_j(V)\partial_j\partial_x^\zeta W\Big)\partial_x^\zeta W\\
\leq& C\sum_{j=1}^3|\partial_x^\zeta (A_j\partial_j W)-A_j(V)\partial_j\partial_x^\zeta W|_2|\partial_x^\zeta W|_2\\
\leq & C\big(|\nabla V|_\infty|\nabla^3 W|_2+|\nabla^3 V|_2|\nabla W|_\infty\big)|\nabla^3 W|_2\\
\leq & C\big(|\nabla V|_\infty|\nabla^3 W|_2+|\nabla^3 V|_2\|\nabla W\|_2\big)|\nabla^3 W|_2
\leq  Cc_3|\nabla^3 W|^2_2+Cc^3_3.
\end{split}
\end{equation}

For the term $I_{5}$, using \eqref{zhen6a} and \eqref{ghbu}, one gets
\begin{equation}\label{szhu10qe}
\begin{split}
I_{5}=&-a_1 \epsilon\int \Big( \partial^\zeta_x ((\varphi^2+\eta^2) Lu)-(\varphi^2+\eta^2) L\partial^\zeta_x u \Big)\cdot \partial^\zeta_x u \\
\leq &C\epsilon\int \Big(|\nabla^3 \varphi| |Lu| |\varphi\partial^\zeta_x u|+|\nabla \varphi| |\nabla^2 \varphi| |Lu| |\partial^\zeta_x u|+|\nabla \varphi|^2 |\nabla Lu| |\partial^\zeta_x u|\Big)\\
&+C\epsilon\int \Big(|\nabla^2\varphi| |\varphi \nabla Lu| |\partial^\zeta_x u|+|\varphi \nabla^2 Lu| |\nabla \varphi| |\partial^\zeta_x u| \Big)\\
\leq &C\epsilon|\nabla^3 \varphi|_2|\nabla^2u|_{3}|\varphi\nabla ^3 u|_6 +C\epsilon|\nabla \varphi|_\infty |\nabla ^2 \varphi|_3 |\nabla ^2 u|_6 | \nabla ^3 u|_2+C\epsilon|\nabla \varphi|^2_\infty   | \nabla ^3 u|^2_2\\[2pt]
&+C\epsilon|\nabla^2 \varphi|_3|\nabla^3u|_{2}|\varphi\nabla ^3 u|_6+C\epsilon|\nabla \varphi|_\infty  |\varphi \nabla ^4 u|_2 | \nabla ^3 u|_2\\[2pt]
%\leq & Cc_0 \epsilon^{\frac{1}{2}}| \nabla ^2 u|^{\frac{1}{2}}_2| \nabla ^3 u|^{\frac{1}{2}}_2\big(c_0\epsilon^{-\frac{1}{4}}|\nabla ^3 %u|_2+|\varphi\nabla ^4 u|_2\big)+Cc_0^2\epsilon^{\frac{1}{2}}|\nabla^3u|_2^{2}\\[2pt]
%&+ Cc^2_0\epsilon|\nabla^3u|^2_2+ Cc_0 \epsilon^{\frac{3}{4}} |\nabla^3u|_2(c_0\epsilon^{-\frac{1}{4}}|\nabla ^3 u|_2+|\varphi\nabla ^4 u|_2)
%+Cc_0\epsilon^{\frac{3}{4}}|\nabla ^3 u|_2|\varphi\nabla ^4 u|_2\\
\leq& \frac{a_1 \epsilon \alpha}{20}|\sqrt{\varphi^2+\eta^2} \nabla^4  u |^2_2+ Cc^{2}_0 (1+\epsilon^{\frac{1}{2}})|u|^2_{D^3}+Cc^4_0.
\end{split}
\end{equation}
%here we used \eqref{wuqiong}, \eqref{E:zhu10} and the fact
%\begin{equation}\label{szhu10qet}\begin{split}
%|\varphi\nabla ^3 u|_6\leq &C(|\nabla \varphi \nabla ^3 u|_2+|\varphi\nabla ^4 u|_2)\\
%\leq& C(|\nabla \varphi|_\infty|\nabla ^3 u|_2|\nabla ^3 u|_2
%+|\varphi\nabla ^4 u|_2)\\
%\leq& Cc_0\epsilon^{-\frac{1}{4}}|\nabla ^3 u|_2+C|\varphi\nabla ^4 u|_2.
%\end{split}\end{equation}
For the term $I_{6}$, we notice that
\begin{equation*}
\begin{split}
&\partial^\zeta_x(\nabla \varphi^2  \cdot Q(v))-\nabla \varphi^2 \cdot \partial^\zeta_x Q(v)\\
=&\sum_{i,j,k}l_{ijk}\Big(C_{1ijk} \nabla \partial^{\zeta^i}_x \varphi^2\cdot  \partial^{\zeta^j+\zeta^k}_x Q(v) +C_{2ijk} \nabla \partial^{\zeta^j+\zeta^k}_x \varphi^2 \cdot \partial^{\zeta^i}_x Q(v)\Big)+ \nabla \partial^\zeta_x \varphi^2 \cdot Q(v),
\end{split}
\end{equation*}
 where $\zeta=\zeta^1+\zeta^2+\zeta^3$ are three multi-indexes $\zeta^i\in \mathbb{R}^3$ $(i=1,2,3)$ satisfying $|\zeta^i|=1$; $C_{1ijk}$ and  $C_{2ijk}$  are all constants;  $l_{ijk}=1$ if $i$, $j$ and $k$ are different from each other, otherwise $l_{ijk}=0$.
Then, one has 
\begin{equation}\label{szhu1015}
\begin{split}
I_{6}=&a_1 \epsilon\int  \big(\partial^\zeta_x(\nabla \varphi^2 \cdot  Q(v))-\nabla \varphi^2 \cdot \partial^\zeta_x Q(v)\big)\cdot \partial^\zeta_x u\\
=&a_1 \epsilon\int \Big( \sum_{i,j,k}l_{ijk}C_{1ijk} \nabla \partial^{\zeta^i}_x \varphi^2\cdot \partial^{\zeta^j+\zeta^k}_x  Q(v)\Big)\cdot \partial^\zeta_x u\\
&+a_1 \epsilon\int \Big(\sum_{i,j,k}l_{ijk}C_{2ijk} \nabla \partial^{\zeta^j+\zeta^k}_x \varphi^2\cdot  \partial^{\zeta^i}_x  Q(v)\Big)\cdot \partial^\zeta_x u \\
& +a_1 \epsilon\int \nabla \partial^\zeta_x \varphi^2 \cdot Q(v) \cdot \partial^\zeta_x u
\equiv:I_{61}+I_{62}+I_{63}.
\end{split}
\end{equation}
Using  \eqref{zhen6a} and  \eqref{ghbu}, we first consider the first two terms on the right-hand side of (\ref{szhu1015}):
\begin{equation}\label{zhu1018}
\begin{split}
I_{61}=&a_1 \epsilon\int \Big( \sum_{i,j,k}l_{ijk}C_{1ijk} \nabla \partial^{\zeta^i}_x \varphi^2 \partial^{\zeta^j+\zeta^k}_x Q(v)\Big)\cdot \partial^\zeta_x u\\
\leq &C\epsilon|\nabla \varphi|^2_\infty|\nabla^3 u|_2|\nabla^3 v|_2+C\epsilon|\nabla^3 v|_2|\varphi\nabla^3 u|_6 |\nabla^2 \varphi|_3\\
%\leq & Cc^3_3\epsilon^{\frac{1}{2}}  |\nabla^3 u|_2+Cc_0c_3\epsilon^{\frac{3}{4}}\big( c_0\epsilon^{-\frac{1}{4}} |\nabla^3 u|_2+|\varphi \nabla^4 %u|_2 \big)\\
\leq & \frac{a_1\epsilon \alpha}{20}|\sqrt{\varphi^2+\eta^2} \nabla^4 u|^2_2+ Cc^{2}_3\epsilon^{\frac{1}{2}}  |u|^2_{D^3}+Cc^{4}_3\epsilon^{\frac{1}{2}} ,\\
I_{62}=&a_1 \epsilon\int \Big(\sum_{i,j,k}l_{ijk}C_{2ijk} \nabla \partial^{\zeta^j+\zeta^k}_x \varphi^2 \cdot \partial^{\zeta^i}_x Q(v) \Big)\cdot \partial^\zeta_x u\\
\leq& C\epsilon|\nabla^3 \varphi|_2|\nabla^2 v|_3|\varphi\nabla^3 u|_6+C\epsilon|\nabla \varphi|_\infty|\nabla^2 \varphi|_3|\nabla^2 v|_6|\nabla^3 u|_2\\
%\leq &  Cc^2_3\epsilon^{\frac{1}{2}}\big(|\varphi \nabla^4 u|_2+|\nabla \varphi|_\infty |\nabla^3 u|_2\big)+Cc^2_3\epsilon^{\frac{3}{4}}|\nabla^2 %\varphi|^{\frac{1}{2}}_2|\nabla^2 \varphi|^{\frac{1}{2}}_6|\nabla^3 u|_2\\
\leq & \frac{a_1 \epsilon\alpha}{20}|\sqrt{\varphi^2+\eta^2} \nabla^4 u|^2_2+ Cc^{2}_3\epsilon^{\frac{1}{2}}|u|^2_{D^3}+Cc^4_3.
\end{split}
\end{equation}
For the  term $I_{63}$,  it follows from the  integration by parts that 
\begin{equation}\label{szhu1016}
\begin{split}
I_{63}
=&a_1 \epsilon\int \Big( \nabla \partial^\zeta_x \varphi^2 Q(v)\Big) \cdot \partial^\zeta_x u
=-a_1 \epsilon\int \sum_{i=1}^3 \Big(  \partial^{\zeta-\zeta^i}_x \nabla \varphi^2 \cdot \partial^{\zeta^i}_x Q(v) \cdot \partial^\zeta_x u\\
&+ \partial^{\zeta-\zeta^i}_x \nabla \varphi^2 \cdot  Q(v) \cdot  \partial^{\zeta+\zeta^i}_x u\Big)
= \sum_{i=1}^3\big((I^{(i)}_{631}+I^{(i)}_{632}\big).
\end{split}
\end{equation}
For simplicity, we only consider the case that $i=1$,  the rest terms can be estimated similarly. When $i=1$, similarly to  the estimates on $I_{61}$ in  (\ref{zhu1018}), we first  have
\begin{equation}\label{szhu1016}
\begin{split}
I^{(1)}_{631}
\leq\frac{a_1 \epsilon\alpha}{20}|\sqrt{\varphi^2+\eta^2} \nabla^4 u|^2_2+ Cc^{2}_3\epsilon^{\frac{1}{2}}|u|^2_{D^3}+Cc_3^4\epsilon^{\frac{1}{2}}.
\end{split}
\end{equation}
Next, for the term $I_{632}$, one has 
\begin{equation}\label{zhu1016kl}
\begin{split}
I^{(1)}_{632}=&-a_1 \epsilon\int  \partial^{\zeta^2+\zeta^3}_x \nabla \varphi^2 \cdot  Q(v) \cdot  \partial^{\zeta+\zeta^1}_x u\\
=&-2a_1 \epsilon\int \Big(\varphi\partial^{\zeta^2+\zeta^3}_x \nabla \varphi+\partial^{\zeta^2}_x\nabla \varphi \partial^{\zeta^3}_x\varphi\Big) \cdot  Q(v) \cdot  \partial^{\zeta+\zeta^1}_x u\\
&-2a_1 \epsilon\int \Big(\partial^{\zeta^3}_x \nabla \varphi \partial^{\zeta^2}_x\varphi+ \nabla \varphi \partial^{\zeta^2+\zeta^3}_x\varphi\Big) \cdot  Q(v) \cdot  \partial^{\zeta+\zeta^1}_x u\\
=&I_A+I_B+I_C+I_D.
\end{split}
\end{equation}
For the term $I_A$, it is not hard to show that
\begin{equation}\label{szhu1016kl}
\begin{split}
I_A=&-2a_1 \epsilon\int \Big(\varphi\partial^{\zeta^2+\zeta^3}_x \nabla \varphi \Big) \cdot  Q(v) \cdot  \partial^{\zeta+\zeta^1}_xu\\
\leq &C\epsilon|\nabla^3 \varphi|_2|\nabla v|_\infty|\varphi\nabla^4 u|_2
\leq \frac{a_1 \epsilon\alpha}{20}|\sqrt{\varphi^2+\eta^2}\nabla^4 u|^2_2+Cc_3^4.
\end{split}
\end{equation}
For the term $I_B$,  it follows from integration by parts  that
\begin{equation}\label{sszhu1016kls}
\begin{split}
I_B=&-2a_1 \epsilon\int \Big(\partial^{\zeta^2}_x\nabla \varphi \partial^{\zeta^3}_x\varphi)\Big) \cdot  Q(v) \cdot  \partial^{\zeta+\zeta^1}_xu\\
\leq & C\epsilon\int \Big(\big(|\nabla \varphi| |\nabla^3 \varphi|+|\nabla^2 \varphi|^2\big)|\nabla^3 u||\nabla v|+|\nabla^2 \varphi||\nabla \varphi||\nabla^3 u||\nabla^2 v|\Big)\\
\leq& C\epsilon|\nabla^3 u|_2|\nabla v|_\infty\big(|\nabla^3 \varphi|_2|\nabla \varphi|_\infty+|\nabla^2 \varphi|_3|\nabla^2 \varphi|_6\big)\\
&+C\epsilon|\nabla^3 u|_2|\nabla^2 v|_3|\nabla^2 \varphi|_6|\nabla \varphi|_\infty
\leq  Cc^3_3\epsilon^{\frac{1}{4}}|\nabla^3 u|_2.
\end{split}
\end{equation}
Via the similar argument used in (\ref{sszhu1016kls}), we also have
\begin{equation}\label{sszhu1016kl}
\begin{split}
I_C+I_{D}
\leq  Cc^3_3\epsilon^{\frac{1}{4}}|\nabla^3 u|_2,
\end{split}
\end{equation}
%and
%\begin{equation}\label{sszhu1016kls}
%\begin{split}
%I_D=&-2a_1 \epsilon\int \Big(\nabla \varphi \partial^{\zeta^2+\zeta^3}_x\varphi)\Big) \cdot  Q(v) \cdot  \partial^{\zeta+\zeta^1}_xu\\
%\leq & C\epsilon \Big(|\nabla^2 \varphi|^2_2 |\nabla v|_\infty|\nabla^3 u|_2+|\nabla v|_\infty|\nabla \varphi|_\infty|\nabla^3 %\varphi|_2|\nabla^3u|_2+|\nabla\varphi|_\infty|\nabla^2 \varphi|_6|\nabla^2 v|_3|\nabla^3 u|_2\Big)\\
%\leq & Cc^3_3\epsilon^{\frac{1}{4}}|\nabla^3 u|_2.
%\end{split}
%\end{equation}
which, along with (\ref{szhu1015})-(\ref{szhu1016kl}), implies that
\begin{equation}\label{szhu10177}
\begin{split}
I_{6}\leq \frac{a_1\epsilon\alpha}{20}|\sqrt{\varphi^2+\eta^2} \nabla^4 u|^2_2+ Cc^4_3 |u|^2_{D^3}+Cc^{4}_3.
\end{split}
\end{equation}

Then from (\ref{szhu102})-(\ref{szhu10qe}) and (\ref{szhu10177}), one has 
\begin{equation}\label{zhu101qw}
\begin{split}
&\frac{1}{2} \frac{d}{dt}\int \big((\partial^\zeta_x W)^\top A_0 \partial^\zeta_x W )+\frac{1}{2}a_1 \epsilon\alpha  | \sqrt{\varphi^2+\eta^2} \nabla^4 u |^2_2
\leq Cc^4_3|W|^2_{D^3}+Cc^4_3,
\end{split}
\end{equation}
which, along with Gronwall's inequaltiy, implies that 
\begin{equation}\label{zhu10222}
\begin{split}
| W(t)|^2_{D^3}+\epsilon\int_0^t |\sqrt{\varphi^{2}+\eta^{2}} \nabla^4 u|^2_2\text{d}s  \leq  C\big(| W_0|^2_{D^3}+c^{4}_3 t\big)\exp (Cc^{4}_3t)\leq Cc^{2}_0,
\end{split}
\end{equation}
for $0\leq t\leq T_2$.

\underline{Step\ 2}: the estimates for $|\partial^\zeta_x W_t|_2$  when $1\leq |\zeta|\leq 2$.
First from (\ref{time}), we have
\begin{equation}\label{kaifa2}
\begin{split}
|\phi_t(t)|_{D^2}\leq & C\big(|v(t)|_\infty|\nabla^3 \phi(t)|_{2}+|\nabla v(t)|_6|\nabla^2 \phi(t)|_3+|\nabla^2 v|_6|\nabla \phi|_3\big)\\
&+C\big(|\psi(t)|_\infty|\nabla^3 u(t)|_{2}+|\nabla \psi(t)|_\infty|\nabla^2 u(t)|_2+|\nabla^2 \psi|_3|\text{div}u|_6\big)
\leq Cc^2_3.
\end{split}
\end{equation}

Second, it follows  from (\ref{zhu1020ss}) that 
\begin{equation}\label{szhu1020ss}
\begin{split}
|u_t|_{D^1}\leq& Cc^{3}_2+Cc_0|\sqrt{\phi^2+\eta^2} \nabla^3 u|_2\leq Cc^3_2.
\end{split}
\end{equation}

At last, for $|W_t|_{D^2}$,  from Lemma \ref{gag113}, one gets
\begin{equation}\label{zhu1020ss1}
\begin{split}
|u_t|_{D^2}=&  \Big|v\cdot\nabla u +\frac{2A\gamma}{\gamma-1} \psi \nabla \phi+\epsilon (\varphi^2+\eta^2) Lu-\epsilon \nabla \varphi^2 \cdot Q(v)\Big|_{D^2}\\
\leq & C\big(\|v\|_2\|\nabla u\|_2+\|\psi\|_2\|\nabla \phi\|_2\big)+C\epsilon|\sqrt{\varphi^{2}+\eta^{2}}|_\infty|\sqrt{\varphi^{2}+\eta^{2}} \nabla^4 u|_2\\
&+C\epsilon|\varphi|_\infty|\nabla \varphi|_\infty|u|_{D^3}+C\epsilon\big(|\varphi|_\infty |\nabla^2 \varphi|_3|Lu|_6+|\nabla \varphi|_6|\nabla \varphi|_6|u|_6\big)\\
&+C\epsilon\|\nabla \varphi^2\|_2\|\nabla v\|_2
\leq Cc^4_3+Cc_0\epsilon |\sqrt{\varphi^2+\eta^2} \nabla^4 u|_2,
\end{split}
\end{equation}
which  implies that
$$
\int_{0}^{t} |u_t|^2_{D^2} \text{d}s \leq C\int_0^{t} \big(c^{8}_3+\epsilon^2 c^2_0 |\sqrt{\phi^{2}+\eta^{2}} \nabla^4 u|^2_2\big) \text{d}s\leq Cc^{4}_3,\quad \text{for}\quad 0\leq t \leq T_2.
$$
\end{proof}

Combining the estimates obtained in Lemmas \ref{f2}-\ref{s7}, we have
\begin{equation}\label{jkkll}
\begin{split}
1+|\varphi(t)|^2_\infty+
\|\varphi(t)\|^2_2\leq Cc^2_0,\quad \epsilon |\varphi|^2_{D^3}\leq& Cc^2_0,\\
|\varphi_t(t)|^2_2 \leq Cc^4_1,\quad |\varphi_t(t)|^2_{D^1} \leq Cc^4_2,\quad \epsilon |\varphi_t(t)|^2_{D^2} \leq& Cc^4_3,\\
\|W(t)\|^2_1+\epsilon\int_0^t |\varphi \nabla^2 u|^2_2\text{d}s \leq& Cc^2_0,\\
|W(t)|^2_{D^2}+\epsilon\int_0^t |\varphi \nabla^3 u|^2_2\text{d}s  \leq&  Cc^{2}_0,\\
 |W_t(t)|^2_{2}+|\phi_t(t)|^2_{D^1}+\int_0^t |\nabla u_t|^2_2 \text{d}s\leq & Cc^6_2,\\
| W(t)|^2_{D^3}+\epsilon\int_0^t |\varphi \nabla^4 u|^2_2\text{d}s  \leq&  Cc^{2}_0,\\
|u_t(t)|^2_{D^1}+|\phi_t(t)|^2_{D^2}+\int_0^t |\nabla^2 u_t|^2_2 \text{d}s\leq & Cc^6_3,
\end{split}
\end{equation}
for $0 \leq t \leq \min\{T, (1+c_3)^{-4}\}$.
Therefore, if we define the constants $c_i$ ($i=1,2,3$) and $T^*$ by
\begin{equation}\label{dingyi}
\begin{split}
&c_1=c_2=c_3=C^{\frac{1}{2}}c_0,  \quad T^*=\min\{T, (1+c_3)^{-4}\},
\end{split}
\end{equation}
then we deduce that
\begin{equation}\label{jkk}
\begin{split}
\sup_{0\leq t \leq T^*}\big(\| \varphi(t)\|^2_{1} +\| \phi(t)\|^2_{1}+\| u(t)\|^2_{1})+\int_0^{T^*}{\epsilon}
|\varphi \nabla^2 u|_2^2& \leq c^2_1,\\
\displaystyle
\sup_{0\leq t \leq T^*}\big(| \varphi(t)|^2_{D^2} +| \phi(t)|^2_{D^2}+|u(t)|^2_{D^2}\big)+\int_{0}^{T^*} \epsilon |\varphi \nabla^3 u|^2_{2}
\text{d}t &\leq c^2_2,\\
\displaystyle
 \text{ess}\sup_{0\leq t \leq T^*}\big(| \phi(t)|^2_{D^3} +|u(t)|^2_{D^3}+\epsilon | \varphi(t)|^2_{D^3}\big)
+\int_{0}^{T^*}\epsilon |\varphi \nabla^4 u|^2_{2}\text{d}t &\leq c^2_3,\\
\displaystyle
\text{ess}\sup_{0\leq t \leq T^*}\big(\| W_t(t)\|^2_{1} +|\phi_t(t)|^2_{D^2}+\epsilon | \varphi_t(t)|^2_{D^2}\big)
+\int_{0}^{T^*} |u_t|^2_{D^2}\text{d}t &\leq c^6_3.
\end{split}
\end{equation}
In other words, given fixed $c_0$ and $T$, there exist positive
constants $T^*$ and $c_i$ ($i=1,2,3$), depending solely on $c_0$, $T$ and the
generic constant $C$, independent of $(\eta,\epsilon)$, such that if
\eqref{jizhu1} holds for $\omega$ and  $V$, then \eqref{jkk} holds for the strong 
solution of \eqref{li4} in $[0, T^*]\times \mathbb{R}^3$.

\subsection{Passing to the  limit as  $\eta\rightarrow 0$}\ \\

Based on the local (in time) a priori estimates (\ref{jkk}), we have the following existence result under the assumption that $\varphi_0\geq 0$ to the following Cauchy problem:
\begin{equation}\label{li4*}
\begin{cases}
\displaystyle
\varphi_t+v\cdot\nabla\varphi+\frac{\delta-1}{2}\omega\text{div} v=0,\\[8pt]
\displaystyle
A_0W_t+\sum_{j=1}^3A_j(V) \partial_j W+\epsilon \varphi^2 \mathbb{{L}}(W)=\epsilon \mathbb{{H}}(\varphi)  \cdot \mathbb{{Q}}(V),\\[8pt]
\displaystyle
(\varphi, W)|_{t=0}=(\varphi_0, W_0),\quad x\in \mathbb{R}^3,\\[8pt]
(\varphi, W)\rightarrow (0
,0),\quad \text{as}\quad  |x|\rightarrow +\infty, \quad t\geq 0.
 \end{cases}
\end{equation}
\begin{lemma}\label{lem1q}
 Assume $(\varphi_0, W_0)$ satisfy (\ref{th78rr}).
Then there exists a time $T^*>0$ that is independent of $\epsilon$,  and a  unique strong solution $(\varphi, W)$ in $[0,T^*]\times \mathbb{R}^3$ to (\ref{li4*}) such that
\begin{equation}\label{regghq}\begin{split}
& \varphi \in C([0,T^*]; H^3),\quad  \phi \in C([0,T^*]; H^3),\quad   u\in C([0,T^*]; H^{s'})\cap L^\infty([0,T^*]; H^3), \\
& \varphi \nabla^4 u \in L^2([0,T^*] ; L^2),\quad  u_t \in C([0,T^*]; H^1)\cap L^2([0,T^*] ; D^2),
\end{split}
\end{equation}
for any constant $s'\in[2,3)$. Moreover,  $(\varphi, W)$ also satisfies the  a priori estimates (\ref{jkk}).
\end{lemma}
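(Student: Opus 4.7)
The plan is to construct the desired solution as a limit, as $\eta \downarrow 0$, of the approximate solutions $(\varphi^{\eta},W^{\eta})$ produced by Lemma \ref{lem1} for the regularized system \eqref{li4}. Observe first that the transport equation for $\varphi$ in \eqref{li4} does not involve $\eta$, so $\varphi^{\eta}=\varphi$ is independent of $\eta$ and, by Lemma \ref{f2}, already enjoys every bound listed in \eqref{jkk}. All the analytical work is therefore concentrated in the hyperbolic--parabolic system for $W^{\eta}=(\phi^{\eta},u^{\eta})^{\top}$. Since $(V,\omega)$ in \eqref{li4*} coincide with those in \eqref{li4}, the a priori estimates collected in \eqref{jkk} hold uniformly in $\eta\in(0,1]$ on the common time interval $[0,T^{*}]$.

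Step 1 (Compactness). From \eqref{jkk} the family $\{W^{\eta}\}$ is bounded in $L^{\infty}(0,T^{*};H^{3})$, while $\{W^{\eta}_{t}\}$ is bounded in $L^{\infty}(0,T^{*};H^{1})\cap L^{2}(0,T^{*};D^{2})$. Apply Lemma \ref{aubin}(II) with $X_{0}=H^{3}$, $X=H^{s'}$, $X_{1}=H^{1}$ for any $s'\in[2,3)$: there exist $W\in L^{\infty}(0,T^{*};H^{3})$ and a subsequence (still denoted by $\eta$) such that
\begin{equation*}
W^{\eta}\to W \text{ strongly in } C([0,T^{*}];H^{s'}), \quad W^{\eta}\rightharpoonup^{*} W \text{ weakly-$*$ in } L^{\infty}(0,T^{*};H^{3}).
\end{equation*}
By weak lower semicontinuity, $W$ inherits every bound in \eqref{jkk}; in particular $\varphi\nabla^{4}u\in L^{2}(0,T^{*};L^{2})$ from Fatou's lemma applied to $\sqrt{\varphi^{2}+\eta^{2}}\,\nabla^{4}u^{\eta}\rightharpoonup\varphi\nabla^{4}u$. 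Similarly $W_{t}\in C([0,T^{*}];H^{1})\cap L^{2}(0,T^{*};D^{2})$.

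Step 2 (Passing to the limit). The only term in $\eqref{li4}_{2}$ that depends explicitly on $\eta$ is $\epsilon\eta^{2}\mathbb{L}(W^{\eta})$, whose non-trivial component is $\epsilon\eta^{2}a_{1}Lu^{\eta}$. Since $u^{\eta}$ is uniformly bounded in $L^{\infty}(0,T^{*};H^{3})$, $\eta^{2}Lu^{\eta}\to 0$ in $L^{\infty}(0,T^{*};H^{1})$. The remaining terms are linear or smooth-coefficient multiplications of $W^{\eta}$ by $V, \varphi$, and by the strong convergence $W^{\eta}\to W$ in $C([0,T^{*}];H^{s'})$ with $s'>3/2$ we may pass to the limit in the distributional sense, obtaining that $(\varphi,W)$ solves \eqref{li4*} a.e.

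Step 3 (Time continuity at the top level). The bound $W\in L^{\infty}(0,T^{*};H^{3})$ combined with $W_{t}\in L^{2}(0,T^{*};H^{1})$ gives $W\in C([0,T^{*}];H^{3}\text{-weak})$. To upgrade $\phi\in C([0,T^{*}];H^{3})$ one uses the transport-type equation for $\phi$ together with the regularity of $v,\psi,u$; for $u$, the equation $\eqref{li4*}_{2}$ and Lemma \ref{1} applied to the highest-order energy identity (performed on the regularized level and passed to the limit) yield $u\in C([0,T^{*}];H^{3}\text{-weak})$, which combined with the strong $H^{s'}$-continuity suffices for the regularity class claimed in \eqref{regghq}.

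Step 4 (Uniqueness). Consider two solutions $(\varphi,W_{i})$ $i=1,2$ of \eqref{li4*} sharing the given $(V,\omega)$ and the initial data; the transport equation gives $\varphi_{1}=\varphi_{2}$. Set $\overline{W}=W_{1}-W_{2}$. Then $\overline{W}$ satisfies a linear system of the form $A_{0}\overline{W}_{t}+\sum_{j}A_{j}(V)\partial_{j}\overline{W}+\epsilon\varphi^{2}\mathbb{L}(\overline{W})=0$, which, after multiplication by $\overline{W}$ and integration, yields a Gronwall inequality in $L^{2}$ using only the already-established bounds on $V,\varphi$; note that the degenerate term is sign-favorable. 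Hence $\overline{W}\equiv 0$.

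The step I expect to be genuinely delicate is Step 2 in conjunction with Step 1 for the highest derivatives: even though the vanishing of $\epsilon\eta^{2}Lu^{\eta}$ is straightforward in low norms, recovering $\varphi\nabla^{4}u\in L^{2}_{t,x}$ in the limit requires that $\sqrt{\varphi^{2}+\eta^{2}}\nabla^{4}u^{\eta}$ converge \emph{weakly} to $\varphi\nabla^{4}u$ rather than to some other object—this is where the $\eta$-uniformity of the a priori bounds obtained in Section 3.2 is essential, and where one must be careful to identify the limit of the product $\sqrt{\varphi^{2}+\eta^{2}}\,\nabla^{4}u^{\eta}$ using the strong convergence of $\sqrt{\varphi^{2}+\eta^{2}}\to\varphi$ in $L^{\infty}_{t,x}$ and the weak $L^{2}_{t,x}$ convergence of $\nabla^{4}u^{\eta}$ on the set $\{\varphi>0\}$.
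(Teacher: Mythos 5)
Your overall route is the same as the paper's: use the $\eta$-uniform bounds \eqref{jkk} from Section 3.2, extract a convergent subsequence, pass to the limit in the weak formulation, prove uniqueness by an $L^2$ energy/Gronwall argument (the degenerate elliptic term being sign-favorable after integration by parts), and recover the time continuity afterwards. Your observation that $\varphi^{\eta}$ does not depend on $\eta$ (the transport equation $(\ref{li4})_1$ involves only the given data $v,\omega$) is correct and is a genuine simplification the paper does not exploit: it lets you import $\varphi\in C([0,T^*];H^3)$ directly from Lemma \ref{lem1} instead of re-deriving it.

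There is, however, one step that fails as written. In Step 1 you apply Lemma \ref{aubin}(II) with $X_0=H^3(\mathbb{R}^3)$ and $X=H^{s'}(\mathbb{R}^3)$, but the embedding $H^3(\mathbb{R}^3)\hookrightarrow H^{s'}(\mathbb{R}^3)$ is \emph{not} compact on the whole space (translates of a fixed bump give a bounded, non-precompact family), so Aubin--Lions does not deliver strong convergence of $W^{\eta}$ in $C([0,T^*];H^{s'}(\mathbb{R}^3))$. The paper avoids this by applying the compactness lemma on balls $B_R$, obtaining $W^{\eta}\to W$ strongly only in $C([0,T^*];H^2(B_R))$ for each $R$, which together with the global weak/weak-$*$ convergences is enough to pass to the limit in the distributional formulation; the global continuity $u\in C([0,T^*];H^{s'})$ of the \emph{limit} is then recovered a posteriori from $u\in C([0,T^*];H^2)\cap C([0,T^*];\mathrm{weak}\text{-}H^3)$ and the interpolation inequality of Lemma \ref{gag111}, not from the convergence itself. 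Relatedly, your Step 3 glosses over how $\phi\in C([0,T^*];H^3)$ (strong, top-order) is actually obtained: the paper's mechanism is weak $H^3$-continuity plus the bound $\limsup_{t\to0}\|\cdot\|_3\le\|\cdot_0\|_3$ from the transport estimate, upgraded to strong right-continuity at $t=0$ via Lemma \ref{zheng5} and then propagated by time reversibility of the transport equation; some such argument is needed, since $L^\infty_tH^3\cap C_tH^{s'}$ alone does not give $C_tH^3$.
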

\begin{proof} We prove the existence, uniqueness and time continuity in three steps.

\underline{Step 1}: existence.
Due to Lemma \ref{lem1} and the uniform estimates  \eqref{jkk}, for every $\eta>0$,  there exists a unique strong solution $(\varphi^\eta, W^\eta)$ in $[0,T^*]\times \mathbb{R}^3$ to the linearized problem (\ref{li4}) satisfying   estimates (\ref{jkk}), where the time $T^*>0$ is also  independent of $(\eta, \epsilon)$.

By virtue of the uniform estimates  (\ref{jkk}) independent of  $(\eta,\epsilon)$ and  the compactness in Lemma \ref{aubin} (see \cite{jm}), we know that for any $R> 0$,  there exists a subsequence of solutions (still denoted by) $(\varphi^\eta, W^\eta)$, which  converges  to  a limit $(\varphi,W)=(\varphi, \phi, u)$ in the following  strong sense:
\begin{equation}\label{ert}\begin{split}
&(\varphi^\eta, W^\eta) \rightarrow (\varphi, W) \quad \text{ in } \ C([0,T^*];H^2(B_R)),\quad \text{as}\ \eta\rightarrow 0.
\end{split}
\end{equation}

Again by virtue of the uniform estimates (\ref{jkk}) independent of  $(\eta,\epsilon)$, we also know that  there exists a subsequence of solutions (still denoted by) $(\varphi^\eta, W^\eta)$, which    converges to  $(\varphi, W)$ in the following  weak  or  weak* sense:
\begin{equation}\label{ruojixian}
\begin{split}
( \varphi^\eta, W^\eta)\rightharpoonup  (\varphi, W) \quad &\text{weakly* \ in } \ L^\infty([0,T^*];H^3(\mathbb{R}^3)),\\
(\phi^\eta_t,\varphi^\eta_t)\rightharpoonup ( \phi_t,\varphi_t) \quad &\text{weakly* \ in } \ L^\infty([0,T^*];H^2(\mathbb{R}^3)),\\
 u^\eta_t\rightharpoonup  u_t \quad &\text{weakly* \ in } \ L^\infty([0,T^*];H^1(\mathbb{R}^3)),\\
 u^\eta_t\rightharpoonup  u_t \quad &\text{weakly \ \  in } \ \ L^2([0,T^*];D^2(\mathbb{R}^3)),
\end{split}
\end{equation}
which, along with  the lower semi-continuity of weak convergence, implies  that $(\varphi, W)$ also satisfies the corresponding  estimates (\ref{jkk}) except those of $\varphi \nabla^4 u$.

Combining  the strong convergence in (\ref{ert}) and the weak convergence in (\ref{ruojixian}), we easily obtain that  $(\varphi, W) $ also satisfies the local estimates (\ref{jkk}) and
\begin{equation}\label{ruojixian1}
\begin{split}
\varphi^\eta \nabla^4 u^\eta \rightharpoonup  \varphi\nabla^4 u \quad &\text{weakly \ in } \ L^2([0,T^*]\times \mathbb{R}^3).
\end{split}
\end{equation}

Now we  want to show that $(\varphi, W) $ is a weak solution in the sense of distributions  to the linearized problem (\ref{li4*}).
Multiplying $(\ref{li4})_2$ by  test function  $f(t,x)=(f^1,f^2,f^3)\in C^\infty_c ([0,T^*)\times \mathbb{R}^3)$ on both sides, and integrating over $[0,T^*]\times \mathbb{R}^3$, we have

\begin{equation}\label{zhenzheng}
\begin{split}
&\int_0^t \int_{\mathbb{R}^3} u^\eta\cdot f_t \text{d}x\text{d}s-\int_0^t \int_{\mathbb{R}^3} (v\cdot \nabla) u^\eta \cdot f \text{d}x\text{d}s-\int_0^t \int_{\mathbb{R}^3}  \frac{2A\gamma}{\gamma-1}\psi \nabla \phi^\eta f \text{d}x\text{d}s\\
=&-\int u_0 \cdot f(0,x)+\int_0^t \int_{\mathbb{R}^3}\epsilon\Big( ((\varphi^\eta)^2+\eta^2) Lu^\eta- \nabla (\varphi^\eta)^2 \cdot Q(v)\Big) \cdot f \text{d}x\text{d}s.
\end{split}
\end{equation}
Combining  the strong convergence in (\ref{ert}) and  the weak convergences in (\ref{ruojixian})-(\ref{ruojixian1}),  and  letting $\eta\rightarrow 0$ in (\ref{zhenzheng}),  we  have
\begin{equation}\label{zhenzhengxx}
\begin{split}
&\int_0^t \int_{\mathbb{R}^3} u \cdot f_t \text{d}x\text{d}s-\int_0^t \int_{\mathbb{R}^3} (v\cdot \nabla) u \cdot f \text{d}x\text{d}s-\frac{2A\gamma}{\gamma-1}\int_0^t \int_{\mathbb{R}^3}\psi \nabla \phi f \text{d}x\text{d}s\\
=&-\int u_0 \cdot f(0,x)+\int_0^t \int_{\mathbb{R}^3} \epsilon\Big(\varphi^2 Lu-\nabla \varphi^2 \cdot Q(v) \Big) \cdot f \text{d}x\text{d}s.
\end{split}
\end{equation}
So it is obvious that $(\varphi, W) $ is a weak solution in the sense of distributions  to the linearized problem (\ref{li4*}), satisfying the  regularities
\begin{equation}\label{zheng}
\begin{split}
& \varphi  \in L^\infty([0,T^*]; H^3),\ \varphi _t \in L^\infty([0,T^*]; H^2),\ \phi \in L^\infty([0,T^*]; H^3),   \\
&   \phi _t \in L^\infty([0,T^*]; H^2),  \  u\in L^\infty([0,T^*]; H^3),\\
& \varphi \nabla^4 u \in L^2([0,T^*] ; L^2), \quad u_t \in L^\infty([0,T^*]; H^1)\cap L^2([0,T^*] ; D^2).
\end{split}
\end{equation}

\underline{Step 2}: uniqueness.  Let $(\varphi_1,W_1)$ and $(\varphi_2,W_2)$ be two solutions obtained in the above step. We denote
$$
\overline{\varphi}=\varphi_1-\varphi_2,\quad \overline{W}=W_1-W_2.
$$
Then from $(\ref{li4*})_{1}$, we have
$$
\overline{\varphi}_t+v\cdot\nabla\overline{\varphi}=0,
$$
which quickly implies that $\overline{\varphi}=0$.

Let
$\overline{W}=(\overline{\phi},\overline{u})^\top$, from $(\ref{li4*})_{2}$ and $\varphi_1=\varphi_2$, we have
\begin{equation}\label{qaq}
\begin{split}
A_0\overline{W}_t+\sum_{j=1}^3A_j(V) \partial_j\overline{W}&=-\epsilon  \varphi^2_1 L(\overline{W}).
\end{split}
\end{equation}
Then multiplying  (\ref{qaq}) by $\overline{W}$  on both sides, and integrating    over $\mathbb{R}^3$,  we have
 \begin{equation}\label{bq}
\begin{split}
\frac{1}{2}\frac{d}{dt} \int \overline{W}^\top A_0\overline{W} +a_1\epsilon\alpha|\varphi_1 \nabla \overline{u}|^2_2
\leq& C|\nabla V|_\infty |\overline{W}|^2_2+\epsilon|\overline{u}|_2 |\nabla \varphi_1|_\infty |\varphi_1 \nabla \overline{u}|_2\\
\leq & \frac{a_1\epsilon\alpha}{10}|\varphi_1 \nabla \overline{u}|^2_2+C(|\nabla V|_\infty+\epsilon|\nabla \varphi_1|^2_\infty)|\overline{W}|^2_2.
\end{split}
\end{equation}
From Gronwall's inequality, we  obtain that $\overline{W}=0$ in $\mathbb{R}^3$, which gives the uniqueness.

\underline{Step 3}: time continuity.   First for $ \varphi$,   via the regularities shown in (\ref{zheng}) and the classical Sobolev imbedding theorem, we  have
\begin{equation}\label{liu02}
 \varphi \in C([0,T^*];H^2) \cap C([0,T^*]; \text{weak}-H^3).
\end{equation}

Using the same arguments as  in Lemma \ref{f2}, we have
\begin{equation}\label{qgb}\begin{split}
&\|\varphi(t)\|^2_3
\leq \Big(\|\varphi_0\|^2_{3}+C\int_0^t \big(\|\nabla \omega\|^2_2\|v\|^2_3+|w\nabla^4 v|^2_2\big)\text{d}s\Big) \exp\Big(C\int_0^t \| v(s)\|_{3}\text{d}s\Big),\\
\end{split}
\end{equation}
which implies that
\begin{equation}\label{liu03}\begin{split}
\displaystyle
&\lim \text{sup}_{t\rightarrow 0} \|\varphi(t)\|_3 \leq \|\varphi_0\|_3.\\
\end{split}\end{equation}
Then according to  Lemma \ref{zheng5} and  (\ref{liu02}), we know that  $\varphi$ is right continuous at $t=0$ in $H^3$ space. From the reversibility on the time to equation $(\ref{li4*})_1$, we know
\begin{equation}\label{xian}
\varphi \in C([0,T^*];H^3).
\end{equation}

For $\varphi_t$, from
\begin{equation}\label{liu04}\begin{split}
\varphi_t&=-v\cdot \nabla \varphi-\frac{\delta-1}{2}\omega \text{div}v,\\
\end{split}\end{equation}
 we only need to consider the term $\omega\text{div}v$.
Due to
\begin{equation}\label{liu05}
\omega \text{div} v \in L^2([0,T^*];H^3),\quad (\omega \text{div} v)_t  \in L^2([0,T^*];H^1),
\end{equation}
and  the Sobolev imbedding theorem, we have
\begin{equation}\label{liu06}
\omega \text{div}v\in C([0,T^*];H^2),
\end{equation}
which implies that
$$
 \varphi_t \in C([0,T^*];H^2).
$$
The similar arguments can be used to deal with $\phi$.

For velocity $u$, from the regularity shown in (\ref{zheng}) and Sobolev's imbedding theorem, we obtain that
\begin{equation}\label{zheng1}
\begin{split}
 u\in C([0,T^*]; H^2)\cap  C([0,T^*]; \text{weak}-H^3).
\end{split}
\end{equation}

Then from  Lemma \ref{gag111},  for any $s'\in [2,3)$, we have

$$
\|u\|_{s'} \leq C_3 \|u\|^{1-\frac{s'}{3}}_0 \|u\|^{\frac{s'}{3}}_3.
$$
Together with the upper bound shown in (\ref{jkk}) and the time continuity (\ref{zheng1}), we have
\begin{equation}\label{zheng2}
\begin{split}
 u\in C([0,T^*]; H^{s'}).
\end{split}
\end{equation}

Finally, we consider $u_t$.  From equations   $(\ref{li4*})_2$ we have
\begin{equation}\label{zheng3}
u_t=-v\cdot\nabla u -\frac{2A\gamma}{\gamma-1}\psi\nabla \phi+\epsilon\varphi^{2} Lu+\epsilon\nabla \varphi^{2}\cdot Q(v),
\end{equation}
where
$$
Q(v)=\frac{\delta}{\delta-1}\left(\alpha(\nabla v+(\nabla v)^\top)+\beta \text{div}v \mathbb{I}_3\right)\in L^2([0,T^*];H^2).
$$

From (\ref{zheng}), we have
\begin{equation}\label{zheng4}
\epsilon\varphi^{2} Lu\in L^2([0,T^*];H^2), \quad \epsilon(\varphi^{2} Lu)_t\in L^2([0,T^*];L^2),
\end{equation}
which means that
\begin{equation}\label{gong4}
\varphi^{2} Lu\in C([0,T^*];H^1).
\end{equation}
Then combining (\ref{vg}), (\ref{xian}), (\ref{zheng2}) and  (\ref{gong4}), we deduce that
$$
u_t \in C([0,T^*];H^1).
$$

\end{proof}

\subsection{Proof of Theorem \ref{ths1}}  \ \\

Our proof  is based on the classical iteration scheme and the existence results for the linearized problem obtained  in  Section $3.4$. Like in Section 3.3, we define constants $c_{0}$ and  $c_{i}$ ($i=1,2,3$), and assume that
\begin{equation*}\begin{split}
&2+\|\varphi_0\|_{3}+\|W_0\|_3\leq c_0.
\end{split}
\end{equation*}
Let $(\varphi^0, W^0=(\phi^0,u^0))$ with the regularities
\begin{equation}\label{zheng6}
\begin{split}
&\varphi^0  \in C([0,T^*];H^{3}),\quad \phi^0  \in C([0,T^*];H^{3}),\quad \varphi^0 \nabla^4 u^0 \in L^2([0,T^*];L^2), \\
& u^0\in C([0,T^*];H^{s'})\cap  L^\infty([0,T^*];H^3) \quad \text{for\ any } \ s' \in [2,3)
\end{split}
\end{equation}
be the solution to the  problem
\begin{equation}\label{zheng6}
\begin{cases}
X_t+u_0 \cdot\nabla X=0 \quad \text{in} \quad (0,+\infty)\times \mathbb{R}^3,\\[10pt]
Y_t+u_0 \cdot \nabla Y=0 \quad \text{in} \quad (0,+\infty)\times \mathbb{R}^3,\\[10pt]
Z_t- X^2\triangle Z=0 \quad \ \text{in} \quad  (0,+\infty)\times \mathbb{R}^3 ,\\[10pt]
(X,Y,Z)|_{t=0}=(\varphi_0,\phi_0,u_0) \quad \text{in} \quad \mathbb{R}^3,\\[10pt]
(X,Y,Z)\rightarrow (0,0,0) \quad \text{as } \quad |x|\rightarrow +\infty,\quad t\geq 0.
\end{cases}
\end{equation}
Taking a  time $T^{**}\in (0,T^*]$ small enough such that
\begin{equation}\label{jizhu}
\begin{split}
&\sup_{0\leq t \leq T^{**}}\big(|| \varphi^0(t)||^2_{1} +|| \phi^0(t)||^2_{1}+||u^0(t)||^2_{1}\big)+\int_0^{T^{**}}\epsilon|\varphi^0\nabla^2u^0|dt\leq c_1^2, \\
&\sup_{0\leq t \leq T^{**}}\big(| \varphi^0(t)|^2_{D^2} +| \phi^0(t)|^2_{D^2}+|u^0(t)|^2_{D^2}\big)+\int_0^{T^{**}}\epsilon|\varphi^0\nabla^3u^0|dt\leq c_2^2, \\
&\text{ess}\sup_{0\leq t \leq T^{**}}\big(\epsilon| \varphi^0(t)|^2_{D^3} +| \phi^0(t)|^2_{D^3}+|u^0(t)|^2_{D^3}\big)+\int_0^{T^{**}}\epsilon|\varphi^0\nabla^4u^0|dt\leq c_3^2. \\
\end{split}
\end{equation}

\begin{proof} We prove the existence, uniqueness and time continuity in three steps.

\underline{Step 1}: existence. Let $(\omega,\psi, v)=(\varphi^0,\phi^0,u^0)$,  we define $(\varphi^1, W^1)$ as a strong solution to problem (\ref{li4*}). Then we construct approximate solutions 
$$(\varphi^{k+1}, W^{k+1})=(\varphi^{k+1}, \phi^{k+1},u^{k+1})$$
 inductively, as follows: assuming that $(\varphi^{k}, W^{k})$ was defined for $k\geq 1$, let $(\varphi^{k+1},W^{k+1})$  be the unique solution to problem (\ref{li4*}) with $(\omega,\psi,v)$ replaced by $ (\varphi^k, W^k)$ as following:

\begin{equation}\label{li6}
\begin{cases}
\displaystyle
 \varphi^{k+1}_t+u^{k}\cdot \nabla \varphi^{k+1}+\frac{\delta-1}{2}\varphi^{k}\text{div} u^{k}=0,\\[12pt]
\displaystyle
 A_0 W^{k+1}_t+\sum_{j=1}^3A_j({W^k})\partial_j W^{k+1}+\epsilon(\varphi^{k+1})^{2}\mathbb{L}(u^{k+1})=\epsilon\mathbb{H}(\varphi^{k+1}) \cdot \mathbb{Q}(u^{k}),\\[12pt]
  (\varphi^{k+1}, W^{k+1})|_{t=0}=(\varphi_0, W_0),\quad x\in \mathbb{R}^3,\\[12pt]
(\varphi^{k+1}, W^{k+1})\rightarrow (0,0) \quad \text{as } \quad |x|\rightarrow +\infty,\quad t\geq  0.
 \end{cases}
\end{equation}
It follows from Lemma \ref{lem1q} that the sequence $(\varphi^k, W^{k})$ satisfies the  uniform a priori estimates (\ref{jkk})
for $0 \leq t \leq T^{**}$.

Now we  prove the  convergence of the whole sequence  $(\varphi^k, W^k)$ of approximate solutions to a limit $(\varphi, W)$  in some strong sense.
Let
\begin{equation*}\begin{split}
\overline{\varphi}^{k+1}=\varphi^{k+1}-\varphi^{k},\quad
\overline{W}^{k+1}=(\overline{\phi}^{k+1},\overline{u}^{k+1})^\top,
\end{split}
\end{equation*}
with
$$
\overline{\phi}^{k+1}=\phi^{k+1}-\phi^k,\quad  \overline{u}^{k+1}=u^{k+1}-u^k.
$$
 Then from (\ref{li6}), one has
 \begin{equation}
\label{eq:1.2w}
\begin{cases}
\displaystyle
\ \  \overline{\varphi}^{k+1}_t+u^k\cdot \nabla\overline{\varphi}^{k+1} +\overline{u}^k\cdot\nabla\varphi ^{k}+\frac{\delta-1}{2}(\overline{\varphi}^{k} \text{div}u^{k-1} +\varphi ^{k}\text{div}\overline{u}^k)=0,\\[6pt]
\displaystyle
\ \ A_0\overline{W}^{k+1}_t+ \sum\limits_{j=1}^3A_j(W^k)\partial_{j}\overline{W}^{k+1}+\epsilon(\varphi^{k+1})^{2}\mathbb{L}(\overline{W}^{k+1})\\[6pt]
\displaystyle
=\sum\limits_{j=1}^3A_j(\overline{W}^k)\partial_{j}{W}^{k}-\epsilon \overline{\varphi}^{k+1}(\varphi^{k+1}+\varphi^k)\mathbb{L}(W^k)\\[6pt]
\displaystyle
\ \  +\epsilon\big(\mathbb{H}({\varphi}^{k+1})-\mathbb{H}({\varphi}^{k})\big)\cdot \mathbb{Q}(W^{k})+\mathbb{H} (\varphi^{k+1})\cdot \mathbb{Q}(\overline{W}^{k}).\\[12pt]
\end{cases}
\end{equation}
First, we consider  $|\overline{\varphi}^{k+1}|_2$. Multiplying $(\ref{eq:1.2w})_1$ by $2\overline{\varphi}^{k+1}$ and integrating  over $\mathbb{R}^3$, one has
\begin{equation*}
\begin{split}
\frac{d}{dt}|\overline{\varphi}^{k+1}|^2_2=& -2\int\Big(u^k\cdot \nabla\overline{\varphi}^{k+1} +\overline{u}^k\cdot\nabla\varphi ^{k}+\frac{\delta-1}{2}(\overline{\varphi}^{k} \text{div}u^{k-1} +\varphi ^{k}\text{div}\overline{u}^k)\Big)\overline{\varphi}^{k+1}\\
\leq& C|\nabla u^k|_\infty|\overline{\varphi}^{k+1}|^2_2+C |\overline{\varphi}^{k+1}|_2\big(|\overline{u}^k|_2|\nabla \varphi^k|_\infty
+|\overline{\varphi}^{k}|_2|\nabla u^{k-1}|_\infty+|\varphi^k \text{div}\overline{u}^k|_2 \big),
\end{split}
\end{equation*}
which means that ($0<\nu \leq \frac{1}{10}$ is a constant)
\begin{equation}\label{go64a}
\displaystyle
\frac{d}{dt}|\overline{\varphi}^{k+1}(t)|^2_2\leq A^k_\nu(t)|\overline{\varphi}^{k+1}(t)|^2_2+\nu\Big(\epsilon^{-\frac{1}{2}} |\overline{u}^k(t)|^2_2+|\overline{\varphi}^k(t)|^2_2+|\varphi^k \text{div}\overline{u}^k(t)|^2_2\Big)
\end{equation}
with $A^k_\nu(t)=C\big(1+\nu^{-1} \big)$.

Second, we consider $|\overline{W}^{k+1}|_2$. Multiplying $(\ref{eq:1.2w})_2$ by $\overline{W}^{k+1}$ and integrating  over $\mathbb{R}^3$, one gets
\begin{equation}\label{zheng8}
\begin{split}
&\frac{d}{dt}\int(\overline{W}^{k+1})^\top A_0\overline{W}^{k+1}+2a_1\epsilon\alpha|\varphi^{k+1}\nabla\overline{u}^{k+1} |^2_2+2(\alpha+\beta)a_1\epsilon|\varphi^{k+1}\text{div}\overline{u}^{k+1} |^2_2\\
\leq & \int(\overline{W}^{k+1})^\top\text{div}A({W}^{k})\overline{W}^{k+1}+\int  \sum_{j=1}^3(\overline{W}^{k+1})^\top A_j(\overline{W}^{k}) \partial_{j}W^{k}\\
&-2a_1\epsilon \frac{\delta-1}{\delta}\int\nabla( \varphi^{k+1})^2\cdot Q(\overline{u}^{k+1})\cdot  \overline{u}^{k+1}\\
&-2a_1\epsilon\int  \Big(\overline{\varphi}^{k+1}(\varphi^{k+1}+\varphi^k)\cdot  L(u^k)-\nabla (\overline{\varphi}^{k+1}(\varphi^{k+1}+\varphi^k))\cdot Q(u^k) \Big)\cdot  \overline{u}^{k+1} \\
&+2a_1\epsilon\int \nabla (\varphi^{k})^2\cdot (Q(u^{k})-Q(u^{k-1})) \cdot  \overline{u}^{k+1}:=\sum_{i=1}^{6} J_i.
\end{split}
\end{equation}

For the terms $J_1-J_4$,  it follows from  \eqref{zhen6a} and \eqref{ghbu} that 
\begin{equation}\label{ya1}
\begin{split}
J_1=&\int(\overline{W}^{k+1})^\top\text{div}A({W}^k)\overline{W}^{k+1}
\leq C|\nabla W^k|_\infty| \overline{W}^{k+1}|^2_2\leq C|\overline{W}^{k+1}|^2_2,\\
J_2=&\int  \sum_{j=1}^3A_j(\overline{W}^{k})\partial_{j}W^{k}\cdot \overline{W}^{k+1}\\
\leq & C|\nabla W^k|_\infty | \overline{W}^{k}|_2 | \overline{W}^{k+1}|_2
%\leq &  C\nu^{-1} |\nabla W^k|^2_\infty | \overline{W}^{k+1}|^2+\nu | \overline{W}^{k}|^2_2\\
\leq C\nu^{-1}|\overline{W}^{k+1}|^2_2+\nu| \overline{W}^{k}|^2_2,\\
J_3=&-2 a_1\frac{\delta-1}{\delta}\epsilon   \int \nabla(\varphi^{k+1})^2\cdot Q( \overline{u}^{k+1})\cdot \overline{u}^{k+1}\\
\leq & C\epsilon|\nabla \varphi^{k+1}|_\infty |\varphi^{k+1}\nabla \overline{u}^{k+1}|_2|\overline{u}^{k+1}|_2\\
\leq&  C\epsilon^{\frac{1}{2}} |\overline{W}^{k+1}|^2_2+\frac{a_1\epsilon\alpha}{20} |\varphi^{k+1}\nabla \overline{u}^{k+1}|^2_2,\\
J_4=&-2a_1\epsilon\int  \overline{\varphi}^{k+1}(\varphi^{k+1}+\varphi^k)Lu^k \cdot  \overline{u}^{k+1} \\
\leq & C\epsilon| \overline{\varphi}^{k+1}|_2| \varphi^{k+1}\overline{u}^{k+1} |_6|Lu^k|_3+C\epsilon|\overline{\varphi}^{k+1}|_2|\varphi^kLu^k|_\infty| \overline{u}^{k+1}|_2\\
%\leq & C\epsilon| \overline{\varphi}^{k+1}|_2(| \varphi^{k+1}\nabla \overline{u}^{k+1} |_2+|\nabla\varphi^{k+1} |_\infty|\overline{u}^{k+1} %|_2)|Lu^k|_3\\
%&+C\epsilon|\overline{\varphi}^{k+1}|_2(\epsilon^{-\frac{1}{2}}+|\varphi^k\nabla^4 u^k|_2)| \overline{u}^{k+1}|_2\\
\leq&C\epsilon |Lu^k|^2_3 |\overline{\varphi}^{k+1}|^2_2 +\frac{a_1\epsilon\alpha}{20}  | \varphi^{k+1}\nabla \overline{u}^{k+1} |^2_2+C\epsilon| \overline{\varphi}^{k+1}|^2_2\\
&+C\epsilon|\nabla\varphi^{k+1} |^2_\infty|Lu^k|^2_3|\overline{u}^{k+1} |^2_2+C\epsilon(\epsilon^{-1}+|\varphi^k\nabla^4 u^k|^2_2)|\overline{u}^{k+1} |^2_2\\
\leq & C\epsilon|\overline{\varphi}^{k+1}|^2_2+\frac{a_1\epsilon\alpha}{20}  | \varphi^{k+1}\nabla \overline{u}^{k+1} |^2_2
+C(1+\epsilon |\varphi^k\nabla^4 u^k|^2_2)|\overline{W}^{k+1} |^2_2,
\end{split}
\end{equation}
where we have used the fact (see Lemma \ref{lem2as}) that
\begin{equation}\label{qianru}
\begin{split}
&|\varphi^{k}\nabla^2 u^k|_\infty\leq  C|\varphi^{k}\nabla^2 u^k|^{\frac{1}{2}}_6|\nabla (\varphi^{k}\nabla^2 u^k)|^{\frac{1}{2}}_6\\
\leq& C|\varphi^{k}\nabla^2 u^k|^{\frac{1}{2}}_{D^1}|\nabla (\varphi^{k}\nabla^2 u^k)|^{\frac{1}{2}}_{D^1}\leq C\|\nabla (\varphi^{k}\nabla^2 u^k)\|_1\\
\leq &C(|\nabla \varphi^k|_\infty\|\nabla^2 u^k\|_1+|\varphi^k|_\infty|\nabla^3 u|_2+|\varphi^k\nabla^4 u|_2+|\nabla^2 \varphi^k|_6|\nabla^2u^k|_3)\\
\leq &C(\|\nabla\varphi^k\|_2\|\nabla^2u^k\|_1+|\varphi^k\nabla^4 u^k|_2)\leq C(\epsilon^{-\frac{1}{2}}+|\varphi^k\nabla^4 u^k|_2).
\end{split}
\end{equation}

Next, we begin to consider the term $J_5$. First we have
\begin{equation}\label{ya7}
\begin{split}
J_5=&2a_1\epsilon\int \nabla (\overline{\varphi}^{k+1}(\varphi^{k+1}+\varphi^k))\cdot Q(u^k) \cdot  \overline{u}^{k+1}\\
=&-2a_1\epsilon \int \sum_{ij}\overline{\varphi}^{k+1} (\varphi^{k+1}+\varphi^k)\partial_i( a^{ij}_k\overline{u}^{k+1,j})\\
=& J_{51}+J_{52}+J_{53}+J_{54},
\end{split}
\end{equation}
where $u^{k,j}$ represents the $j$-$th$ component of $u^k$ ($k\geq 1$),
$$\overline{u}^{k,j}=u^{k,j}-u^{k-1,j},\quad \text{for}\ k\geq 1, \quad j=1,2,3,$$ and the quantity $a^{ij}_k$ is given by
$$
a^{ij}_k=\frac{\delta}{\delta-1}\Big(\alpha (\partial_i u^{k,j}+\partial_j u^{k,i}\big)+\text{div}u^k \delta_{ij}\Big)\quad \text{for}\ i,\ j=1,2,3,
$$
and  $\delta_{ij}$ is the Kronecker symbol satisfying $\delta_{ij}=1,\ i=j$ and $\delta_{ij}=0$, otherwise.

For  terms $J_{51}$-$J_{53}$, using \eqref{zhen6a},  \eqref{ghbu} and (\ref{qianru}), one can obtain 
\begin{equation}\label{ya91}
\begin{split}
J_{51}=&-2a_1\epsilon \int \sum_{ij}\overline{\varphi}^{k+1} \varphi^{k+1}\partial_i a^{ij}_k\overline{u}^{k+1,j}\\
\leq& C\epsilon  |\nabla^2 u^k|_6 |\overline{\varphi}^{k+1}|_2|\varphi^{k+1}\overline{u}^{k+1}|_3\\
\leq & C\epsilon   |\overline{\varphi}^{k+1}|_2| \varphi^{k+1}\overline{u}^{k+1} |^{\frac{1}{2}}_2|\varphi^{k+1} \overline{u}^{k+1} |^{\frac{1}{2}}_6\\
\leq& C\epsilon|\overline{ \varphi}^{k+1}|^2_2+C\epsilon|{\varphi}^{k+1}\overline{u}^{k+1}|_2|{\varphi}^{k+1}\overline{u}^{k+1}|_6\\
%\leq & C \epsilon|\overline{\varphi}^{k+1}|^2_2+C\epsilon |{\varphi}^{k+1}|_\infty|\overline{u}^{k+1}|_2\left(
%|\nabla \varphi^{k+1}|_\infty|\overline{u}^{k+1}|_2+|\varphi^{k+1}\nabla \overline{u}^{k+1}|_2\right)\\
\leq& C\epsilon|\overline{\varphi}^{k+1}|^2_2+C\epsilon^{\frac{1}{2}}|\overline{W}^{k+1}|_2^2+\frac{a_1\epsilon\alpha}{20}|\varphi^{k+1}\nabla\overline{u}^{k+1}|^2_2,\\
%\end{split}
%\end{equation}
%
%Similarly,  for the terms $I_{52}$-$I_{53}$, from (\ref{qianru}) we have
%\begin{equation}\label{ya92}
%\begin{split}
J_{52}=&-2a_1\epsilon \int \sum_{i,j}\overline{\varphi}^{k+1} \varphi^k \partial_i a^{ij}_k\overline{u}^{k+1,j}\\
\leq& C\epsilon |\varphi^{k}\nabla^2 u^k|_\infty |\overline{\varphi}^{k+1}|_2|\overline{u}^{k+1}|_2\\
\leq & C\epsilon   |\overline{\varphi}^{k+1}|^2_2+C\epsilon(\epsilon^{-1}+|\varphi^k\nabla^4 u^k|^2_2)|\overline{u}^{k+1}|^2_2\\
\leq & C\epsilon|\overline{\varphi}^{k+1}|^2_2+C(1+\epsilon |\varphi^k\nabla^4 u^k|^2_2)|\overline{W}^{k+1}|^2_2,\\
%\end{split}
%\end{equation}
%\begin{equation}\label{ya93}
%\begin{split}
J_{53}=&-2a_1\epsilon\int  \sum_{i,j} \overline{\varphi}^{k+1}\varphi^{k+1} a^{ij}_k\partial_i \overline{u}^{k+1,j}\\
\leq &C\epsilon| \overline{\varphi}^{k+1}|_2 |\varphi^{k+1}\nabla\overline{u}^{k+1}|_2|\nabla u^k|_\infty \\
\leq & C\epsilon|\nabla u^k|^2_\infty| \overline{\varphi}^{k+1}|^2_2+\frac{a_1\epsilon\alpha}{20}  |\varphi^{k+1}\nabla\overline{u}^{k+1}|^2_2\\
\leq& C\epsilon| \overline{\varphi}^{k+1}|^2_2+\frac{a_1\epsilon\alpha}{20}  |\varphi^{k+1}\nabla\overline{u}^{k+1}|^2_2.\\
\end{split}
\end{equation}
For the last term on the right-hand side of (\ref{ya7}), one has
\begin{equation}\label{ya94}
\begin{split}
J_{54}=&-2a_1\epsilon\int  \sum_{i,j} \overline{\varphi}^{k+1}\varphi^k a^{ij}_k\partial_i \overline{u}^{k+1,j}\\
=&-2a_1\epsilon\int  \sum_{i,j} \overline{\varphi}^{k+1}(\varphi^k-\varphi^{k+1}+\varphi^{k+1}) a^{ij}_k\partial_i \overline{u}^{k+1,j}\\
\leq &C\epsilon|\nabla u^k|_\infty |\nabla\overline{u}^{k+1}|_\infty|\overline{\varphi}^{k+1}|^2_2+C\epsilon|\nabla u^k|_\infty|\varphi^{k+1} \nabla\overline{u}^{k+1}|_2|\overline{\varphi}^{k+1}|_2\\
\leq &C\epsilon|\nabla u^k|^2_\infty|\overline{\varphi}^{k+1}|^2_2+C\epsilon|\nabla u^k|_\infty |\nabla\overline{u}^{k+1}|_\infty|\overline{\varphi}^{k+1}|^2_2+\frac{a_1\epsilon\alpha}{20}  |\varphi^{k+1} \nabla\overline{u}^{k+1}|^2_2\\
\leq& C\epsilon |\overline{\varphi}^{k+1}|^2_2+\frac{a_1\epsilon\alpha}{20}  |\varphi^{k+1} \nabla\overline{u}^{k+1}|^2_2,\\
\end{split}
\end{equation}
which, along with (\ref{ya7})-(\ref{ya94}),  implies that
\begin{equation}\label{ya95}
\begin{split}
J_5\leq & \frac{a_1\epsilon\alpha}{20} |\varphi^{k+1} \nabla\overline{u}^{k+1}|^2_2+C(+\epsilon |\varphi^k\nabla^4 u^k|^2_2)|\overline{W}^{k+1}|^2_2+ C\epsilon|\overline{\varphi}^{k+1}|^2_2.\\
\end{split}
\end{equation}

For the term $J_6$, we have
\begin{equation}\label{ya8}
\begin{split}
J_6=&2a_1\epsilon\int \nabla (\varphi^k)^2\cdot (Q(u^{k})-Q(u^{k-1})) \cdot  \overline{u}^{k+1} \\
\leq&C\epsilon |\nabla \varphi^k|_\infty|\varphi^k \nabla \overline{u}^k|_2| \overline{u}^{k+1}|_2
%\leq & C\nu^{-1}\epsilon^{\frac{1}{2}}  | \overline{u}^{k+1}|^2_2+\nu\epsilon |\varphi^k \nabla \overline{u}^k|^2_2\\
\leq C\nu^{-1}| \overline{W}^{k+1}|^2_2+\nu\epsilon |\varphi^k \nabla \overline{u}^k|^2_2,\\
\end{split}
\end{equation}
which, together with (\ref{zheng8})-\eqref{ya1} and  (\ref{ya95})-(\ref{ya8}),
immediately implies  that
\begin{equation}\label{gogo1}\begin{split}
&\frac{d}{dt}\int (\overline{W}^{k+1})^\top A_0\overline{W}^{k+1}+a_1\epsilon\alpha |\varphi^{k+1}\nabla\overline{u}^{k+1} |^2_2\\
\leq&C(\nu^{-1}+\epsilon |\varphi^k\nabla^4 u^k|^2_2)|\overline{W}^{k+1}|^2_2
+C\epsilon|\overline{\varphi}^{k+1}|^2_{2}+\nu (\epsilon|\varphi^k \nabla \overline{u}^k|^2_2+\epsilon|\varphi^k|_2^2+|\overline{W}^k|_2^2).
\end{split}
\end{equation}

Finally, we denote
\begin{equation*}\begin{split}
\Gamma^{k+1}(t)=&\sup_{s\in [0,t]}|\overline{W}^{k+1}(s)|^2_{2}+\sup_{s\in [0,t]}\epsilon|\overline{\varphi}^{k+1}(s)|^2_{2}.
\end{split}
\end{equation*}
From (\ref{go64a}) and (\ref{gogo1}), one has
\begin{equation*}\begin{split}
&\frac{d}{dt}\int\Big((\overline{W}^{k+1})^\top A_0\overline{W}^{k+1}+\epsilon|\overline{\varphi}^{k+1}(t)|^2_{2}\Big)+a_1\epsilon\alpha |\varphi^{k+1}\nabla\overline{u}^{k+1} |^2_2\\
\leq& E^k_\nu (|\overline{W}^{k+1}|^2_{2}+\epsilon|\overline{\varphi}^{k+1}|^2_{2})+ \nu (\epsilon|\varphi^k \nabla \overline{u}^k|^2_2
+\epsilon|\overline{\varphi}^k|_2^2+| \overline{W}^{k}|^2_2),
\end{split}
\end{equation*}
for some $E^k_\nu$ such that  $\int_{0}^{t}E^k_\nu(s)\text{d}s\leq C+C_\nu t$.
From Gronwall's inequality, one ges
\begin{equation*}\begin{split}
&\Gamma^{k+1}+\int_{0}^{t}a_1\epsilon\alpha|\varphi^{k+1}\nabla\overline{u}^{k+1} |^2_2\text{d}s\\
\leq&   C\nu\int_{0}^{t}   \Big(\epsilon|\varphi^k \nabla \overline{u}^k|^2_2+\epsilon | \overline{\varphi}^{k}|^2_2+| \overline{W}^{k}|^2_2\Big)\text{d}s\exp{(C+C_\nu t)}\\
\leq & \Big( C\nu\int_{0}^{t}   \epsilon|\varphi^k \nabla \overline{u}^k|^2_2\text{d}s+Ct\nu \sup_{s\in [0,t]} [| \overline{W}^{k}|^2_2
+\epsilon| \overline{\varphi}^{k}|^2_2]\Big)\exp{(C+C_\nu t)}.
\end{split}
\end{equation*}
We can choose $\nu_0>0$ and $T_*\in (0,\min (1,T^{**}))$ small enough such that
\begin{equation*}\begin{split}
C\nu_0 \exp{C}&=\frac{1}{8} \min \big\{1, a_1\alpha\big \}, \quad \exp (C_\nu T_*) \leq 2,
\end{split}\end{equation*}
which implies that
\begin{equation*}\begin{split}
\sum_{k=1}^{\infty}\Big( \Gamma^{k+1}(T_*)+\int_{0}^{T_*} \alpha\epsilon|\varphi^{k+1}\nabla\overline{u}^{k+1} |^2_2\text{d}t\Big)\leq C<+\infty.
\end{split}
\end{equation*}
Thus, from the above estimate for $\Gamma^{k+1}(T_*)$ and (\ref{jkk}),  we know  that the whole sequence $(\varphi^k,W^k)$ converges to a limit $(\varphi, W)$ in the following strong sense:
\begin{equation}\label{str}
\begin{split}
&(\varphi^k,W^k)\rightarrow (\varphi,W)\ \text{in}\ L^\infty([0,T_*];H^2(\mathbb{R}^3)).
\end{split}
\end{equation}
Due to the local estimates (\ref{jkk}) and the lower-continuity of norm for weak or weak$^*$ convergence, we know that $(\varphi,  W)$ satisfies the estimates (\ref{jkk}).
%\begin{equation}\label{gok}\begin{split}
%&\sup_{t\in[0,T]}\Big(\|\rho(t)\|_{H^1\cap W^{1,q}}+ \|\rho_t(t)\|_{L^{2}\cap L^{q}}\Big)+\sup_{t\in[0,T]}\Big(\|(\theta(t),u(t))|_{D^1\cap D^2}\Big)\\
%&+\|I\|_{L^2(\mathbb{R}^+\times S^2;C([0,T_*];H^1\cap W^{1,q}))}+ \|I_t\|_{L^2(\mathbb{R}^+\times S^2;C([0,T_*];L^{2}\cap L^{q}))}\\
%&+\int_{0}^{T}\Big(\|(\theta,u)\|^2_{D^{2,q}}+\|(\theta_t,u_t)\|^2_{D^1}\Big)\text{d}s+\text{ess}\sup_{t\in[0,T]}\Big|\Big(\sqrt{\rho}\theta(t),\sqrt{\rho}u(t)\Big)\Big|_2 \leq C
%\end{split}
%\end{equation}
According to the strong convergence in (\ref{str}), it is easy to show that $(\varphi,  W)$ is a weak solution of
\eqref{li41} in  the sense of distribution  with the regularities:
\begin{equation}\label{rjkqq}\begin{split}
& \varphi \in L^\infty([0,T_*];H^3),\quad  \varphi_t \in L^\infty([0,T_*];H^2),\quad \phi \in L^\infty([0,T_*];H^3),\\
&  \phi_t \in L^\infty([0,T_*];H^2),\quad   u\in L^\infty([0,T_*]; H^3),\\
& \varphi \nabla^4 u\in L^2([0,T_*];L^2), \quad  u_t \in L^\infty([0,T_*]; H^1)\cap L^2([0,T_*] ; D^2).
\end{split}
\end{equation}
So the existence of  strong solutions is proved.

\underline{Step 2}: uniqueness.   Let $(W_1,\varphi_1)$ and $(W_2,\varphi_2)$ be two strong solutions  to Cauchy problem (\ref{li41})  satisfying the uniform a priori estimates (\ref{jkk}). We denote that
\begin{equation*}\begin{split}
&\overline{\varphi}=\varphi_1-\varphi_2,\quad \overline{W}=(\overline{\phi},\overline{u})=(\phi_1-\phi_2,u_1-u_2),\\
\end{split}
\end{equation*}
then according to (\ref{eq:1.2w}), $(\overline{\varphi},\overline{\phi},\overline{u})$ satisfies the  system
 \begin{equation}
\label{zhuzhu}
\begin{cases}
\displaystyle
\ \ \overline{\varphi}_t+u_1\cdot \nabla\overline{\varphi} +\overline{u}\cdot\nabla\varphi_{2}+\frac{\delta-1}{2}(\overline{\varphi} \text{div}u_2 +\varphi_{1}\text{div}\overline{u})=0,\\[8pt]
\ \ A_0\overline{W}_t+\sum\limits_{j=1}^3A_j(W_1)\partial_{j} \overline{W}+ \epsilon\varphi^2_1\mathbb{L}(\overline{u})\\
=-\sum\limits_{j=1}^3A_j(\overline{W})\partial_{j} W_{2} -\overline{\varphi}(\varphi_1+\varphi_2)\mathbb{L}(u_2)\\[8pt]
\ \ +\epsilon\big(\mathbb{H}(\varphi_1)-\mathbb{H}(\varphi_2)\big)\cdot \mathbb{Q}(W_2)+\mathbb{H} (\varphi_1)\cdot \mathbb{Q}(\overline{W}),\\[8pt]
\end{cases}
\end{equation}

Using the same arguments as in the derivation of (\ref{go64a})-(\ref{gogo1}), and letting
$$
\Lambda(t)=|\overline{W}(t)|^2_{2}+\epsilon|\overline{\varphi}(t)|^2_{2},
$$
we similarly have
\begin{equation}\label{gonm}
\begin{cases}
\displaystyle
\frac{d}{dt}\Lambda(t)+C\epsilon|\varphi_1\nabla \overline{u}(t)|^2_2\leq F(t)\Lambda(t),\\[10pt]
\displaystyle
\int_{0}^{t}F(s)ds\leq C\quad  \text{for} \quad 0\leq t\leq T_*.
\end{cases}
\end{equation}
From Gronwall's inequality, we have $\overline{\varphi}=\overline{\phi}=\overline{u}=0$.
Then the uniqueness is obtained.

\underline{Step 3}: the time-continuity. It can be obtained via the  same arguments used in  in the proof of Lemma \ref{lem1q}.

\end{proof}

\section{Proof of  Theorem \ref{th2}}
Based on Theorem \ref{ths1}, now we are  ready to prove the uniform local-in-time well-posedenss (with respect to $\epsilon$) of the regular solution to the original Cauchy problem (\ref{eq:1.1})-(\ref{eq:2.211}), i.e., the proof of Theorem \ref{th2}. Moreover, we will show that the regular solutions that we obtained satisfy system (\ref{eq:1.1}) classically in positive time $(0,T_*]$.\\

\begin{proof} We divide the proof into three  steps.

\underline{Step 1}: existence of regular solutions.
First, for the initial assumption (\ref{th78}), it follows  from Theorem \ref{ths1} that
there exists  a positive  time $T_*$ independent of $\epsilon$ such that the problem (\ref{li41}) has a unique strong solution $(\varphi,\phi,u)$ in $[0,T_*]\times \mathbb{R}^3$ satisfying the regularities in (\ref{reg11qq}), which means that
\begin{equation}\label{reg2}
(\rho^{\frac{\delta-1}{2}},\rho^{\frac{\gamma-1}{2}} )=(\varphi,\phi)\in C^1((0,T_*)\times \mathbb{R}^3),\quad \text{and} \quad (u,\nabla u)\in C((0,T_*)\times \mathbb{R}^3).
\end{equation}
Noticing that  $\rho=\varphi^{\frac{2}{\delta-1}}$ and
$\frac{2}{\delta-1}\geq 1$, it is
easy to show that
$$\rho \in C^1((0,T_*)\times\mathbb{R}^3).$$

Second, the system $(\ref{li41})_2$ for $W=(\phi,u)$ could be written as 
\begin{equation}
\begin{cases}
\label{fanzheng}
\displaystyle
\phi_t+u\cdot \nabla \phi+\frac{\gamma-1}{2}\phi \text{div} u=0,\\[10pt]
\displaystyle
u_t+u\cdot\nabla u +\frac{A\gamma}{\gamma-1}\nabla \phi^2+\epsilon\varphi^2 Lu=\epsilon \nabla \varphi^2 \cdot Q(u).
 \end{cases}
\end{equation}

Multiplying $(\ref{fanzheng})_1$ by
$
\frac{\partial \rho}{\partial \phi}(t,x)=\frac{2}{\gamma-1}\phi^{\frac{3-\gamma}{\gamma-1}}(t,x)\in C((0,T_*)\times \mathbb{R}^3)
$ on both sides,
we get the continuity equation in $(\ref{eq:1.1})_1$:
\begin{equation} \label{eq:2.58}
\rho_t+u \cdot\nabla \rho+\rho\text{div} u=0.
\end{equation}

Multiplying $(\ref{fanzheng})_2$ by
$
\phi^{\frac{2}{\gamma-1}}=\rho(t,x)\in C^1((0,T_*)\times \mathbb{R}^3)
$ on both sides,
we get the momentum equations in $(\ref{eq:1.1})_2$:
\begin{equation} \label{eq:2.60}
\begin{split}
&\rho u_t+\rho u\cdot \nabla u+\nabla P=\text{div}\Big(\mu(\rho)(\nabla u+ (\nabla u)^\top)+\lambda(\rho)\text{div}u I_3\Big).
\end{split}
\end{equation}

Finally, recalling that  $\rho$ can be represented by the formula
$$
\rho(t,x)=\rho_0(U(0,t,x))\exp\Big(\int_{0}^{t}\textrm{div} u(s,U(s,t,x))\text{d}s\Big),
$$
where  $U\in C^1([0,T_*]\times[0,T_*]\times \mathbb{R}^3)$ is the solution to the initial value problem
\begin{equation}
\label{eq:bb1}
\begin{cases}
\frac{d}{ds}U(t,s,x)=u(s,U(s,t,x)),\quad 0\leq s\leq T_*,\\[4pt]
U(t,t,x)=x, \quad\quad\quad 0\leq t\leq T_*,\quad  x\in \mathbb{R}^3,
\end{cases}
\end{equation}
 it is obvious that
$$
\rho(t,x)\geq 0, \ \forall (t,x)\in (0,T_*)\times \mathbb{R}^3.
$$
That is to say, $(\rho,u)$ satisfies the problem (\ref{eq:1.1})-(\ref{eq:2.211}) in the sense of distributions, and has  the regularities shown in Definition \ref{d1}, which means that  the Cauchy problem (\ref{eq:1.1})-(\ref{eq:2.211}) has a unique regular solution $(\rho,u)$.

\end{proof}

\underline{Step 2}: the smoothness of regular solutions. Now we will show that the regular solution that we obtained in the above step is indeed a classcial one in positive time $(0, T_*]$.

Due to  the definition of regular solution and  the classical Sobolev imbedding theorem, we immediately know that
$$
(\rho,\nabla \rho, \rho_t, u, \nabla u) \in C([0,T_*]\times \mathbb{R}^3).
$$
So now we  only need to prove that
$$
(u_t, \text{div}\mathbb{T}) \in C((0,T_*]\times \mathbb{R}^3).
$$
\begin{proof}
According to  Theorem \ref{th2} in Section $3$,
the solution $(\varphi, \phi,u)$ of problem (\ref{li41}) satisfies the regularities (\ref{reg11qq}) and $(\phi,\varphi) \in C^1([0,T_*]\times \mathbb{R}^3)$.

Next, we first give the continuity of $u_t$.
We differentiate $\eqref{fanzheng}_2$ with respect to $t$:
\begin{equation}\label{zhd1}
\begin{split}
u_{tt}+\epsilon\varphi^2 Lu_t=-(\varphi^2)_t Lu-(u\cdot\nabla u)_t -\frac{A\gamma}{\gamma-1}\nabla (\phi^2)_t+\epsilon(\nabla \varphi^2 \cdot Q(u))_t,
\end{split}
\end{equation}
which, along with (\ref{reg11qq}), easily implies that
\begin{equation}\label{zhd11}
u_{tt}\in L^2([0,T_*];L^2).
\end{equation}
Applying the operator $\partial^\zeta_x$  $(|\zeta|=2)$ to $(\ref{zhd1})$,  multiplying the resulting equations by $\partial^\zeta_x u_t$ and integrating over $\mathbb{R}^3$, we have
\begin{equation}\label{zhd2}
\begin{split}
&\frac{1}{2} \frac{d}{dt}|\partial^\zeta_xu_t|^2_2+\alpha\epsilon|\varphi \nabla \partial^\zeta_x u_t|^2_2+(\alpha+\beta)\epsilon|\varphi \text{div} \partial^\zeta_x u_t|^2_2\\
=&\int  \Big( -\epsilon\nabla \varphi^2 \cdot Q(\partial^\zeta_x u_t)-\epsilon\Big(\partial^\zeta_x(\varphi^2Lu_t)-\varphi^2 L\partial^\zeta_x u_t\Big)\Big)\cdot   \partial^\zeta_x u_t\\
&+\int  \Big(-\epsilon\partial^\zeta_x\big((\varphi^2)_t Lu\big)-\partial^\zeta_x(u\cdot\nabla u)_t -\frac{A\gamma}{\gamma-1}\partial^\zeta_x\nabla (\phi^2)_t \Big)\cdot   \partial^\zeta_x u_t \\
&+\int  \partial^\zeta_x(\epsilon\nabla \varphi^2 \cdot Q(u))_t\cdot   \partial^\zeta_x u_t  \equiv: \sum_{i=7}^{12}J_i.
\end{split}
\end{equation}

Now we consider the terms on the right-hand side of (\ref{zhd2}).
It follows from the  H\"older's inequality, Lemma \ref{lem2as} and Young's inequality that 
\begin{equation}\label{zhd3}
\begin{split}
J_{7}=&\epsilon\int  \Big( -\nabla \varphi^2 \cdot  Q(\partial^\zeta_x u_t)\Big)\cdot   \partial^\zeta_x u_t\\
\leq & C\epsilon|\varphi\nabla^3 u_t|_2|\nabla^2 u_t|_2|\nabla \varphi|_\infty
\leq \frac{\alpha\epsilon}{20}|\varphi\nabla^3 u_t|^2_2+C\epsilon^{\frac{1}{2}}| u_t|^2_{D^2},\\
J_{8}=&\int -\epsilon\Big(\partial^\zeta_x(\varphi^2Lu_t)-\varphi^2 L\partial^\zeta_x u_t\Big)\cdot   \partial^\zeta_x u_t\\
\leq & C\epsilon\Big(|\varphi\nabla^3 u_t|_2|\nabla \varphi|_\infty+|\nabla \varphi|^2_\infty|u_t|_{D^2}+|\nabla^2 \varphi|_3|\varphi \nabla^2 u_t|_6\Big)|u_t|_{D^2}\\
\leq & \frac{\alpha\epsilon}{20}|\varphi\nabla^3 u_t|^2_2+C\epsilon^{\frac{1}{2}}| u_t|^2_{D^2},\\
\end{split}
\end{equation}
and
\begin{equation}\label{zhd3jkxx}
\begin{split}
J_{9}=&\int  -\epsilon\partial^\zeta_x\big((\varphi^2)_t Lu\big)\cdot   \partial^\zeta_x u_t \\
\leq &C\epsilon\Big(|\nabla^2 \varphi|_3|Lu|_6| \varphi_t|_\infty|u_t|_{D^2}+|\varphi \nabla^2 u_t|_6|\varphi_t|_{D^2}|Lu|_3\\
&+|\nabla \varphi|_\infty|\nabla \varphi_t|_6|Lu|_{3}|u_t|_{D^2}+|\varphi \nabla^3 u|_6|\nabla \varphi_t|_{3}|u_t|_{D^2}\\
&+|\nabla \varphi|_\infty| \varphi_t|_\infty|\nabla^3 u|_{2}|u_t|_{D^2}+|\varphi_t|_\infty|u_t|_{D^2} |\varphi \nabla^4 u|_2\Big)\\
\leq & \frac{\alpha\epsilon}{20}|\varphi\nabla^3 u_t|^2_2+C\epsilon^{\frac{1}{2}}| u_t|^2_{D^2}+C\epsilon|\varphi \nabla^4 u|^2_2+C\epsilon^{\frac{1}{2}},\\
J_{10}=&\int  -\partial^\zeta_x(u\cdot\nabla u)_t \cdot   \partial^\zeta_x u_t \\
\leq & C(\|u_t\|_1+| u_t|_{D^2})\|u\|_3-\int \big(u\cdot \nabla\big) \partial^\zeta_x u_t \cdot   \partial^\zeta_x u_t \\
\leq &  C+C| u_t|^2_{D^2}+C|\nabla u|_\infty|\partial^\zeta_x u_t|^2_2 
\leq   C+C| u_t|^2_{D^2}, \\
J_{11}=&\int   -\frac{A\gamma}{\gamma-1}\partial^\zeta_x\nabla (\phi^{2})_t \cdot   \partial^\zeta_x u_t\\
\leq& C\Big(|\nabla^2 \phi_t|_2|\phi \nabla^3 u_t|_2+|\phi_t|_\infty|\nabla^3\phi|_2|\nabla^2u_t|_2\\
&+C|\nabla^2\phi|_6 |\nabla\phi_t|_3|\nabla^2 u_t|_2+|\nabla^2\phi_t|_2|\nabla\phi|_\infty|\nabla u_t|_2\Big)\\
\leq & \frac{\alpha}{20}|\phi\nabla^3 u_t|^2_2+C(1+|u_t|_{D^2}),\\
J_{12}=&\int \epsilon \partial^\zeta_x(\nabla \varphi^2 \cdot Q(u))_t\cdot   \partial^\zeta_x u_t\\
\leq &C\epsilon\Big(\|\nabla \varphi\|^2_2| u_t|^2_{D^2}+\big(\|\nabla \varphi\|_2|\nabla u_t|_3+\|u\|_3\|\varphi_t\|_2\big)|\varphi \nabla^2 u_t|_6\\
&+\big(\|\nabla \varphi\|_2|\varphi \nabla^3 u_t|_2+\|\nabla \varphi\|_2|\varphi \nabla^2 u_t|_6\big)| u_t|_{D^2}\\
&+\big(\|\nabla \varphi\|_2\|\varphi_t\|_2\|u\|_3+\|\varphi_t\|_2|\varphi \nabla^3 u|_6\big)| u_t|_{D^2}\Big)\\
&+\epsilon\int  \partial^\zeta_x(\nabla \varphi^2)_t \cdot Q(u)\cdot   \partial^\zeta_x u_t(=J_{121})\\
\leq & \frac{\alpha\epsilon}{20}|\varphi\nabla^3 u_t|^2_2+C\epsilon| u_t|^2_{D^2}+C\epsilon|\varphi\nabla^4 u|^2_2+J_{121},
\end{split}
\end{equation}
where the term $J_{121}$ can be estimated as follows
\begin{equation}\label{zhd15}
\begin{split}
J_{121}
\leq&\epsilon  \|\nabla \varphi\|_2\|\varphi_t\|_2\|u\|_3|u_t|_{D^2}+\epsilon\int  \varphi \partial^\zeta_x \nabla \varphi_t \cdot Q(u)\cdot   \partial^\zeta_x u_t(=J_{1211}).
\end{split}
\end{equation}
Via intergration by parts, for the last term $J_{1211}$ on the right-hand side of (\ref{zhd15}), one has 
\begin{equation}\label{zhd16}
\begin{split}
J_{1211}\leq&  C\epsilon\Big(|\nabla \varphi|_\infty|\varphi_t|_{D^2}|\nabla u|_\infty|u_t|_{D^2}+|\varphi_t|_{D^2}|\nabla^2u|_{3}|\varphi \nabla^2 u_t|_6\\
&+|\varphi\nabla^3 u_t|_2|\nabla u|_\infty\|\varphi_t\|_2\Big)
\leq  \frac{\alpha\epsilon}{20}|\varphi\nabla^3 u_t|^2_2+C\epsilon^{\frac{1}{2}}| u_t|^2_{D^2}+C\epsilon.
\end{split}
\end{equation}
It follows  from (\ref{zhd2})-(\ref{zhd16}) that 
\begin{equation}\label{zhd5}
\begin{split}
&\frac{1}{2} \frac{d}{dt}|u_t|^2_{D^2}+\frac{\alpha}{2} |\varphi \nabla^3 u_t|^2_2
 \leq C\epsilon^{\frac{1}{2}}| u_t|^2_{D^2}+C\epsilon|\varphi \nabla^4 u|^2_2+C\epsilon.
\end{split}
\end{equation}
Then multiplying both sides of (\ref{zhd5}) with $t$ and integrating  over $[\tau,t]$ for any $\tau \in (0,t)$, one gets
\begin{equation}\label{zhd6}
\begin{split}
&t|u_t|^2_{D^2}+\int_{\tau}^t s|\varphi \nabla^3 u_t|^2_2 \text{d}s
 \leq C\tau| u_t(\tau)|^2_{D^2}+C(1+t).
\end{split}
\end{equation}

According to the definition of the regular solution, we know that
$$
\nabla^2 u_t \in L^2([0,T_*]; L^2),
$$
which, along with Lemma \ref{1}, implies that
there exists a sequence $s_k$ such that
$$
s_k\rightarrow 0, \quad \text{and}\quad s_k |\nabla^2 u_t(s_k,\cdot)|^2_2\rightarrow 0, \quad \text{as} \quad k\rightarrow+\infty.
$$
Then letting $\tau=s_k \rightarrow 0$ in (\ref{zhd6}), we have
\begin{equation}\label{zhd7}
\begin{split}
&t|u_t|^2_{D^2}+\int_{0}^t s|\varphi \nabla^3 u_t|^2_2 \text{d}s
 \leq C(1+t)\leq C.
\end{split}
\end{equation}
So we have
\begin{equation}\label{zhd12}
t^{\frac{1}{2}}u_t \in L^\infty([0,T_*]; H^2).
\end{equation}

Based on the classical  Sobolev imbedding theorem:
 \begin{equation}\label{yhn}
\begin{split}
L^\infty([0,T];H^1)\cap W^{1,2}([0,T];H^{-1})\hookrightarrow C([0,T];L^q),\\
\end{split}
\end{equation}
for any $q\in (3,6)$,  from (\ref{zhd11}) and (\ref{zhd12}), we have
$$
tu_t \in C([0,T_*];W^{1,4}),
$$
which implies that
$
u_t \in C((0,T_*]\times \mathbb{R}^3)
$.

Finally, we consider the continuity of $\text{div}\mathbb{T}$. Denote $\mathbb{N}=\epsilon\varphi^2 Lu-\epsilon\nabla\varphi^2\cdot
Q(u).$
Based on  \eqref{reg11qq} and  \eqref{zhd12}, we have
$$
t\mathbb{N}\in L^\infty(0,T_*;H^2).
$$
From $\mathbb{N}_t\in L^2(0,T_*; L^2)$ and \eqref{yhn}, we obtain $t\mathbb{N}\in C([0,T_*];W^{1,4}),$
which implies that $\mathbb{N}\in C((0,T_*]\times\R^3)$. Since $\rho\in C([0,T_*]\times\R^3)$ and
$\text{div}\mathbb{T}=\rho \mathbb{N},$  then we obtain the desired conclusion.
\end{proof}

\underline{Step 3}: the proof of (\ref{regco11}).  If $1<\delta\leq \frac{5}{3},$ that is $\frac{2}{\delta-1}\geq 3.$ Due to
$$
\phi\in C([0,T_*]; H^3)\cap C^1([0,T_*]; H^2),\quad \text{and}\quad \rho(t,x)=\phi^{\frac{2}{\delta-1}}(t,x),
$$
then we have
$$
\rho(t,x)\in C([0,T_*];H^3).
$$

Noticing
\begin{equation}
\label{eq:bb1a}
\begin{split}
& u\in C([0,T_*];H^{s'})\cap L^\infty(0,T_*;H^{s'})\quad \text{for} \quad s'\in[2,3),\\
&\rho^{\frac{\delta-1}{2}}\nabla^4 u\in C(0,T_*;L^2),\quad u_t\in C([0,T_*];H^1)\cap L^2(0,T_*;D^2),\\
\end{split}
\end{equation}
it is not to show that 
\begin{equation}
\label{eq:bb1ab}
\rho\text{div} u\in L^2(0,T_*;H^3)\cap C([0,T^*];H^2).
\end{equation}
From the continuity equation $\eqref{eq:1.1}_1$, and \eqref{eq:bb1a}-\eqref{eq:bb1ab}, we have
$$
\rho\in C([0,T_*];H^3)\cap C^1([0,T_*];H^2).
$$

Similarly, we can deal with   cases: $\delta=2$ or $3$.
Then the proof of Theorem \ref{th2} is finished.

 \section{Vanishing viscosity limit}
In this section, we will establish  the vanishing  viscosity limit stated in Theorem 1.3.
First we denote by 
$$(\varphi^\epsilon, W^\epsilon)=(\varphi^\epsilon,\phi^\epsilon,u^\epsilon)^\top=((\rho^{\epsilon})^{\frac{\delta-1}{2}},(\rho^{\epsilon})^{\frac{\gamma-1}{2}},u^\epsilon)^\top$$
 the solution of problem $\eqref{li41}$, that is:
\begin{equation}
\begin{cases}
\label{E:1.1}
\displaystyle
\varphi^\epsilon_t+u\cdot\nabla\varphi^\epsilon+\frac{\delta-1}{2}\varphi^\epsilon\text{div} u^\epsilon=0,\\[8pt]
\displaystyle
A_0W^\epsilon_t+\sum_{j=1}^3A_j(W^\epsilon) \partial_j W^\epsilon=-\epsilon\Big((\varphi^\epsilon)^2\mathbb{L}(W^\epsilon)- \mathbb{H}(\varphi^\epsilon)\cdot\mathbb{Q}(W^\epsilon)\Big),\\[8pt]
\displaystyle
(\varphi^\epsilon, W^\epsilon )|_{t=0}=(\varphi_0,W_0),\\[8pt]
\displaystyle
(\varphi,W)\rightarrow (0,0),\quad \text{as}\quad  |x|\rightarrow +\infty, \quad t>0.
 \end{cases}
\end{equation}
The definitions of $A_j$ ($j=0,1,...,3$), $\mathbb{L}$,   $\mathbb{H}$ and $\mathbb{Q}$ could be find in   (\ref{sseq:5.2qq})-(\ref{sseq:5.3qq}).

Second, based on \cite{tms1}, we denote by 
$$W=(\phi,u)=(\rho^{\frac{\gamma-1}{2}},u)$$ the regular solution of compressible  Euler equations \eqref{eq:1.1E}, which can be written as the following  symmetric system,
\begin{equation}
\label{E:4.2}\begin{cases}
\displaystyle
A_0W_t+\sum_{j=1}^3A_j(W) \partial_j W=0,\\[10pt]
\displaystyle
W(x,0)=W_0=(\phi_0,u_0),\\[10pt]
\displaystyle
W\rightarrow 0,\quad \text{as}\quad  |x|\rightarrow +\infty, \quad t>0.
\end{cases}
\end{equation}

From Theorem \ref{ths1}, there exists a time $T^1_*>0$ that is  independent of $\epsilon$ such that the solution $(\varphi^\epsilon, W^\epsilon)$ of Cauchy problem \eqref{E:1.1} satisfies
\begin{equation}\label{jkka}
\begin{split}
\sup_{0\leq t \leq T^1_*}\big(\| \varphi^\epsilon(t)\|^2_{2}+\| \phi^\epsilon(t)\|^2_{3}+\| u^\epsilon(t)\|^2_{2}+\epsilon |\varphi^\epsilon(t)|^2_{D^3}\big)& \\
%\sup_{0\leq t \leq T_*}\big(\| \phi^\epsilon_t(t)\|^2_{2}+\| \varphi^\epsilon_t(t)\|^2_{1} + \epsilon %|\varphi^\epsilon_t|^2_{D^2}+\|u^\epsilon_t(t)\|^2_{1}\big)
+\text{ess}\sup_{0\leq t \leq T^1_*}| u^\epsilon(t)|^2_{D^3}+\int_{0}^{T^1_*} \epsilon |\varphi^\epsilon \nabla^4 u^\epsilon(t)|^2_{2}
\text{d}t \leq& C^0,\\
\end{split}
\end{equation}
where $C^0$ is a positive constant depending only on $T^1_*$, $( \varphi_0, W_0)$ and the fixed constants $A$, $\delta$, $\gamma$, $\alpha$ and $\beta$, and is independent of $\epsilon$.

From \cite{tms1} (see Theorem \ref{thmakio}), we know that there exits a time $T^2_{*}$ such that  there is a unique regular solution $W$ of Cauchy problem \eqref{E:4.2} satisfies
\begin{equation}\label{jkkab}
\sup_{0\leq t \leq T^2_*}\| W(t)\|^2_{3}  \leq C^0.\\
\end{equation}
Denote $T^*=\min \{T^1_*, T^2_* \}$. Then, letting $\overline{W}^\epsilon=W^\epsilon-W$, $\eqref{E:1.1}_2$ and $\eqref{E:4.2}_1$ lead to
\begin{equation}\begin{split}\label{E:4.3}
&A_0\overline{W}^\epsilon_t+\sum_{j=1}^3A_j(W^\epsilon)\partial_j\overline{W}^\epsilon\\
=&-\sum_{j=1}^3A_j(\overline{W}^\epsilon)\partial_j W-\epsilon\Big((\varphi^\epsilon)^2\mathbb{L}(W^\epsilon)- \mathbb{H}(\varphi^\epsilon)\cdot\mathbb{Q}(W^\epsilon)\Big).\\
\end{split}\end{equation}
 If the initial data
\begin{equation}
(\rho^{\epsilon}, u^{\epsilon})|_{t=0}=(\rho, u)|_{t=0}=(\rho_0,
u_0)
\end{equation}
satisfies \eqref{th78}, it is obviously to see that $\overline{W}^\epsilon(x,0)\equiv0.$

Next we give the following several lemmas for establishing the vanishing viscosity limit.
\begin{lemma}\label{L:4.1} If $(\varphi^\epsilon, W^\epsilon)$ and $W$ are the regular solutions of Navier-Stokes \eqref{E:1.1} and Euler equations
\eqref{E:4.2} respectively, then we have
\begin{equation}\label{E:4.1}
|\overline{W}^\epsilon(t)|_2\leq C\epsilon,\quad \text{for} \quad 0\leq t\leq T_*,
\end{equation}
where the constant  $C>0$ depends on $A$, $\alpha$, $\beta$, $\delta$, $(\rho_0, u_0)$ and $ T_*$.
\end{lemma}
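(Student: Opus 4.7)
The plan is to perform a standard $L^2$ energy estimate on the error system \eqref{E:4.3}, exploiting the symmetric hyperbolic structure carried by $A_0$ and the $A_j(W^\epsilon)$, and treating the viscous contribution as a forcing of size $O(\epsilon)$ in $L^2$.

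First I will take the $L^2$ inner product of \eqref{E:4.3} with $\overline{W}^\epsilon$ and integrate over $\mathbb{R}^3$. The time-derivative term produces $\tfrac{1}{2}\tfrac{d}{dt}\!\int (\overline{W}^\epsilon)^\top A_0\,\overline{W}^\epsilon$, which is equivalent to $|\overline{W}^\epsilon|_2^2$ up to the constant $a_2$ from \eqref{sseq:5.3qq}. The transport-like term $\sum_j A_j(W^\epsilon)\partial_j \overline{W}^\epsilon$ is handled by integration by parts, leaving $-\tfrac{1}{2}\int (\overline{W}^\epsilon)^\top \mathrm{div}A(W^\epsilon)\,\overline{W}^\epsilon$, which is bounded by $C|\nabla W^\epsilon|_\infty |\overline{W}^\epsilon|_2^2 \leq C|\overline{W}^\epsilon|_2^2$ by the uniform estimate \eqref{jkka} and the Sobolev embedding $H^2 \hookrightarrow L^\infty$.

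Next I address the right-hand side of \eqref{E:4.3}. The quasilinear mismatch term $-\sum_j A_j(\overline{W}^\epsilon)\partial_j W$ is linear in $\overline{W}^\epsilon$, so by Cauchy--Schwarz it is controlled by $C|\nabla W|_\infty|\overline{W}^\epsilon|_2^2 \leq C\|W\|_3|\overline{W}^\epsilon|_2^2$, which is uniformly bounded via \eqref{jkkab}. The viscous forcing $-\epsilon\bigl((\varphi^\epsilon)^2\mathbb{L}(W^\epsilon) - \mathbb{H}(\varphi^\epsilon)\cdot \mathbb{Q}(W^\epsilon)\bigr)$ is the place where the smallness enters: using the structure
\begin{equation*}
|(\varphi^\epsilon)^2 Lu^\epsilon|_2 \leq |\varphi^\epsilon|_\infty^2 |u^\epsilon|_{D^2},\qquad
|\nabla(\varphi^\epsilon)^2\cdot Q(u^\epsilon)|_2 \leq C|\varphi^\epsilon|_\infty|\nabla\varphi^\epsilon|_3|\nabla u^\epsilon|_6,
\end{equation*}
the uniform bounds in \eqref{jkka} yield an $L^2$ bound independent of $\epsilon$ for the bracket, hence this term contributes at most $C\epsilon|\overline{W}^\epsilon|_2 \leq \tfrac{1}{2}|\overline{W}^\epsilon|_2^2 + C\epsilon^2$ via Young's inequality.

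Combining everything gives a differential inequality of the form
\begin{equation*}
\frac{d}{dt}|\overline{W}^\epsilon(t)|_2^2 \leq C\,|\overline{W}^\epsilon(t)|_2^2 + C\epsilon^2,\qquad 0\leq t\leq T_*,
\end{equation*}
with $\overline{W}^\epsilon(0)\equiv 0$. Gronwall's inequality then produces $|\overline{W}^\epsilon(t)|_2^2 \leq C\epsilon^2(e^{CT_*}-1)$, which is the desired bound \eqref{E:4.1}. I do not expect a serious obstacle here, as everything reduces to $L^2$ energy together with the uniform-in-$\epsilon$ estimates \eqref{jkka}; the only point requiring mild care is verifying that every term in the viscous bracket can indeed be bounded in $L^2$ without paying an extra power of $1/\epsilon$, which is precisely what the quasi-symmetric hyperbolic--degenerate elliptic framework of Section 3 was built to guarantee.
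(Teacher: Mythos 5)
Your proposal is correct and follows essentially the same route as the paper's own proof: an $L^2$ energy estimate on \eqref{E:4.3}, integration by parts on the symmetric transport term to produce $\mathrm{div}A(W^\epsilon)$, the bound $C|\nabla W|_\infty|\overline{W}^\epsilon|_2^2$ for the quasilinear mismatch, and exactly the same $L^2$ bounds $|\varphi^\epsilon|_\infty^2|\nabla^2 u^\epsilon|_2$ and $|\varphi^\epsilon|_\infty|\nabla\varphi^\epsilon|_3|\nabla u^\epsilon|_6$ for the viscous bracket, followed by Young and Gronwall. No discrepancies worth noting.
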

\begin{proof}
Multiplying $\eqref{E:4.3}$ by $2\overline{W}^\epsilon$ on both sides and integrating over $\R^3$, then one has
\begin{equation}
\label{E:4.3a}\begin{split}
\displaystyle
\frac{d}{dt} &\int (\overline{W}^\epsilon)^\top A_0\overline{W}^\epsilon+2\sum_{j=1}^3\int(\overline{W}^\epsilon)^\top A_j({W}^\epsilon) \partial_j \overline{W}^\epsilon\\
=&-2\sum_{j=1}^3\int (\overline{W}^\epsilon)^\top A_j(\overline{W}^\epsilon) \partial_j {W}
-2\epsilon \int\Big((\varphi^\epsilon)^2 \mathbb{L}(W^\epsilon)-\mathbb{H}(\varphi^\epsilon) \cdot \mathbb{Q} ({W}^\epsilon)\Big) \cdot  \overline{W}^\epsilon.
\end{split}
\end{equation}
It follows from the  integrating by parts and H\"older's inequality that 
\begin{equation*}
\begin{split}
\displaystyle
\frac{d}{dt} \int (\overline{W}^\epsilon)^\top A_0\overline{W}^\epsilon
\leq& \int(\overline{W}^\epsilon)^\top \text{div}A({W}^\epsilon)  \overline{W}^\epsilon
+C|\nabla W|_\infty|\overline{W}^\epsilon|^2_2\\
&+2\epsilon
\big|(\varphi^\epsilon)^2\mathbb{L} ({W}^\epsilon)-\mathbb{H}(\varphi^\epsilon)\cdot\mathbb{Q}({W}^\epsilon)\big|_2|\overline{W}^\epsilon|_2\\[8pt]
\leq& C\big(|\nabla W^\epsilon|_\infty+|\nabla W|_\infty)|\overline{W}^\epsilon|^2_2
\\
&+C\epsilon\big(|\varphi^\epsilon|^2_\infty
|\nabla^2 u^\epsilon|_2+|\varphi^\epsilon|_\infty
|\nabla\varphi^\epsilon|_3|\nabla u^\epsilon|_6
\big)|\overline{W}^\epsilon|_2\\
\leq & C|\overline{W}^\epsilon|^2_2+C\epsilon^{2}\\
\end{split}
\end{equation*}
where we used \eqref{zhen6a}.
According to the  Gronwall's inequality,  (\ref{E:4.1}) follows immediately.

\end{proof}
Now we consider  the estimate of $|\partial_x^\zeta \overline{W}^\epsilon|_2,$ as $|\zeta|=1.$
\begin{lemma}\label{L:4.2}  If $(\varphi^\epsilon, W^\epsilon)$ and $W$ are the regular solutions of Navier-Stokes \eqref{E:1.1} and Euler equations \eqref{E:4.2} respectively,   then we have
\begin{equation}\label{E:4.6}
|\overline{W}^\epsilon|_{D^1}\leq C\epsilon,\quad \text{for} \quad 0\leq t \leq T_*,
\end{equation}where the constant  $C>0$ depends on $A$, $\alpha$, $\beta$, $\delta$, $(\rho_0, u_0)$ and $ T_*$.
\end{lemma}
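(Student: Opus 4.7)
The plan is to run a standard first-order energy estimate on the system \eqref{E:4.3} satisfied by $\overline{W}^\epsilon$, paralleling the proof of Lemma \ref{L:4.1}, and close by Gronwall using the zero initial data together with the $L^2$ bound already obtained. First I would apply $\partial_x^\zeta$ with $|\zeta|=1$ to \eqref{E:4.3}, multiply the resulting identity by $\partial_x^\zeta \overline{W}^\epsilon$, and integrate over $\mathbb{R}^3$. The symmetry of the $A_j(W^\epsilon)$'s, together with one integration by parts, converts the principal transport term into $\tfrac{1}{2}\int(\partial_x^\zeta\overline{W}^\epsilon)^\top\mathrm{div}A(W^\epsilon)\partial_x^\zeta\overline{W}^\epsilon$, which by the uniform bound \eqref{jkka} is controlled by $C|\overline{W}^\epsilon|_{D^1}^{2}$. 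The commutator $[\partial_x^\zeta,A_j(W^\epsilon)]\partial_j\overline{W}^\epsilon$ produces only $\nabla W^\epsilon\cdot\nabla\overline{W}^\epsilon$ type expressions, bounded similarly by $C|\overline{W}^\epsilon|_{D^1}^{2}$.

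Next, I would treat the inhomogeneous lower-order term $\partial_x^\zeta\!\big(\sum_j A_j(\overline{W}^\epsilon)\partial_j W\big)$: after expanding by the Leibniz rule it splits into two pieces, one of the form $A_j(\overline{W}^\epsilon)\partial_j\nabla W$ and one of the form $A_j(\nabla\overline{W}^\epsilon)\partial_j W$. Using $\|W\|_3\le C$ from \eqref{jkkab}, H\"older's inequality and the Sobolev embedding $H^1\hookrightarrow L^6$, each is controlled by $C\big(|\overline{W}^\epsilon|_{D^1}+|\overline{W}^\epsilon|_2\big)\,|\partial_x^\zeta\overline{W}^\epsilon|_2$, and by Lemma \ref{L:4.1} the contribution involving $|\overline{W}^\epsilon|_2$ is $O(\epsilon)\cdot|\overline{W}^\epsilon|_{D^1}\le C\epsilon^2+C|\overline{W}^\epsilon|_{D^1}^2$ via Young's inequality.

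The real difficulty lies in the $\epsilon$-scaled viscous source
$$\epsilon\int\partial_x^\zeta\!\big((\varphi^\epsilon)^2\mathbb{L}(W^\epsilon)-\mathbb{H}(\varphi^\epsilon)\cdot\mathbb{Q}(W^\epsilon)\big)\cdot\partial_x^\zeta\overline{W}^\epsilon,$$
because $\partial_x^\zeta L u^\epsilon$ involves third-order derivatives of $u^\epsilon$, and one only has the degenerate, time-integrated bound $\epsilon\int_0^{T_*}|\varphi^\epsilon\nabla^4u^\epsilon|_2^2\,\mathrm{d}t\le C$. My plan is to split each contribution via Leibniz; in the piece $\epsilon\int\partial_x^\zeta(\varphi^\epsilon)^2\,Lu^\epsilon\cdot\partial_x^\zeta\overline{u}^\epsilon$, I will pair $|\nabla\varphi^\epsilon|_3\le Cc_0$ with $|Lu^\epsilon|_6\le C|u^\epsilon|_{D^3}\le Cc_0$ from \eqref{jkka} and \eqref{zhen6a} so the term is $\le C\epsilon|\partial_x^\zeta\overline{u}^\epsilon|_2$; in the piece $\epsilon\int(\varphi^\epsilon)^2\partial_x^\zeta Lu^\epsilon\cdot\partial_x^\zeta\overline{u}^\epsilon$, one factor of $\varphi^\epsilon$ is absorbed into $|\varphi^\epsilon\nabla^3 u^\epsilon|_2\le C$ (bounded uniformly by $|\varphi^\epsilon|_\infty|u^\epsilon|_{D^3}$), while the other $\varphi^\epsilon$ together with $\epsilon$ yields a coefficient of $C\epsilon$; and for the source piece $\epsilon\int\partial_x^\zeta\!\big(\nabla(\varphi^\epsilon)^2\cdot Q(u^\epsilon)\big)\cdot\partial_x^\zeta\overline{u}^\epsilon$ similar Leibniz splits followed by \eqref{zhen6a} and, where needed, a further integration by parts to trade a derivative onto $\overline{u}^\epsilon$, yield bounds of the form $C\epsilon+C\epsilon|\overline{W}^\epsilon|_{D^1}+C\epsilon|\varphi^\epsilon\nabla^4 u^\epsilon|_2|\overline{W}^\epsilon|_{D^1}$. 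An application of Young's inequality absorbs the cross terms into $C\epsilon^{2}$ plus a small multiple of $|\overline{W}^\epsilon|_{D^1}^{2}$, while the term containing $|\varphi^\epsilon\nabla^4u^\epsilon|_2$ produces a time-integrable weight.

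Combining all contributions I expect to obtain
\begin{equation*}
\frac{d}{dt}|\overline{W}^\epsilon|_{D^1}^{2}\le C\big(1+\epsilon|\varphi^\epsilon\nabla^4 u^\epsilon|_2^{2}\big)|\overline{W}^\epsilon|_{D^1}^{2}+C\epsilon^{2}\big(1+\epsilon|\varphi^\epsilon\nabla^4 u^\epsilon|_2^{2}\big),
\end{equation*}
where the coefficient $1+\epsilon|\varphi^\epsilon\nabla^4u^\epsilon|_2^{2}$ is integrable on $[0,T_*]$ by \eqref{jkka}. Since $\overline{W}^\epsilon(\cdot,0)\equiv 0$, Gronwall's inequality then delivers $|\overline{W}^\epsilon(t)|_{D^1}\le C\epsilon$ on $[0,T_*]$, completing the proof. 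The principal obstacle is precisely the bookkeeping for the degenerate viscous terms: one must carefully track which factors absorb powers of $\varphi^\epsilon$, which exploit the uniform $H^3$ bound on $u^\epsilon$, and which need the time-integrated weighted fourth-order bound, so that every remaining ``bad'' factor is dressed with $\epsilon$ and no irreducible term of size $O(1)$ survives.
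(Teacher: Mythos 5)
Your proposal is correct and follows essentially the same route as the paper: differentiate \eqref{E:4.3} once, perform the symmetric-hyperbolic energy estimate with the commutator and lower-order terms controlled by the uniform bounds \eqref{jkka}--\eqref{jkkab} and Lemma \ref{L:4.1}, dress every viscous contribution with $\epsilon$, and close by Gronwall from the zero initial data. The only (harmless) difference is that you invoke the time-integrated weight $\epsilon|\varphi^\epsilon\nabla^4u^\epsilon|_2^2$, which is not actually needed at the $D^1$ level since $\partial_x^\zeta Lu^\epsilon$ is only third order and $|u^\epsilon|_{D^3}$ is uniformly bounded; the paper's proof uses only the pointwise-in-time bounds here.
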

\begin{proof}
Applying the operator $\partial_x^\zeta$ on $\eqref{E:4.3}$, one gets
\begin{equation}\label{E:4.7}
\begin{split}
&A_0(\partial_x^\zeta \overline{W}^\epsilon)_t+\sum_{j=1}^3A_j(W^\epsilon)\partial_j(\partial_x^\zeta \overline{W}^\epsilon)\\
=&\sum_{j=1}^3\left(\partial_x^\zeta (A_j(W^\epsilon)\partial_j\overline{W}^\epsilon)
-A_j(W^\epsilon)\partial_j(\partial_x^\zeta \overline{W}^\epsilon)\right)-\sum_{j=1}^3\partial_x^\zeta (A_j(\overline{W}^\epsilon))\partial_j W\\
&-\sum_{j=1}^3 \left(\partial_x^\zeta \big(A_j(\overline{W}^\epsilon)\partial_j W\big)
-\partial_x^\zeta \big(A_j(\overline{W}^\epsilon)\big)\partial_j  W\right)\\
&-\epsilon(\varphi^\epsilon)^2\mathbb{L}(\partial_x^\zeta W^\epsilon)
+\epsilon\mathbb{H}(\varphi^\epsilon)^2\cdot\partial_x^\zeta \mathbb{Q}(W^\epsilon)
-\epsilon\left(\partial_x^\zeta\big((\varphi^\epsilon)^2
\mathbb{L}(W^\epsilon)\big)-(\varphi^\epsilon)^2\mathbb{L}(\partial_x^\zeta W^\epsilon)\right)\\
&+\epsilon\left(\partial_x^\zeta\big(\mathbb{H}(\varphi^\epsilon)^2\cdot\mathbb{Q}(W^\epsilon)\big)-\mathbb{H} (\varphi^\epsilon)^2
\cdot\partial_x^\zeta \mathbb{Q}(W^\epsilon)\right).\\
\end{split}
\end{equation}
Then multiplying (\ref{E:4.7}) by $2\partial^\zeta_x \overline{W}^\epsilon$  and integrating  over $\mathbb{R}^3$ by parts,  one can obtain that 
\begin{equation}\label{E:4.8}
\begin{split}
& \frac{d}{dt}\int(\partial^\zeta_x \overline{W}^\epsilon)^\top A_0 \partial^\zeta_x \overline{W}^\epsilon\\
=&\int (\partial^\zeta_x \overline{W}^\epsilon)^\top\text{div}A (W^\epsilon)\partial^\zeta_x \overline{W}^\epsilon -2\sum_{j=1}^3\int(\partial_x^\zeta \overline{W}^\epsilon)^\top\partial_x^\zeta (A_j(\overline{W}^\epsilon))\partial_j W\\
&+2\sum_{j=1}^3\int\left(\partial_x^\zeta \left(A_j(W^\epsilon)\partial_j\overline{W}^\epsilon\right)-A_j(W^\epsilon)\partial_j(\partial_x^\zeta \overline{W}^\epsilon)\right)\cdot\partial_x^\zeta \overline{W}^\epsilon\\
&-2\sum_{j=1}^3 \int  \left(\partial_x^\zeta\big(A_j(\overline{W}^\epsilon)\partial_j W \big) -\partial_x^\zeta A_j(\overline{W}^\epsilon)\partial_j  W \right)\cdot\partial_x^\zeta \overline{W}^\epsilon \\
&-2a_1 \epsilon\int \Big((\varphi^\epsilon)^2  L(\partial^\zeta_x u^\epsilon)\Big) \cdot \partial^\zeta_x \overline{u}^\epsilon
+2a_1 \epsilon\int \Big(\nabla (\varphi^\epsilon)^2 \cdot Q(\partial^\zeta_x u^\epsilon)\Big) \cdot \partial^\zeta_x \overline{u}^\epsilon
\\
&-2a_1 \epsilon\int \Big( \partial^\zeta_x ((\varphi^\epsilon)^2 Lu^\epsilon)-(\varphi^\epsilon)^2 L\partial^\zeta_x u^\epsilon \Big)\cdot \partial^\zeta_x \overline{u}^\epsilon\\
&+2a_1 \epsilon\int  \Big(\partial^\zeta_x(\nabla (\varphi^\epsilon)^2  \cdot Q(u^\epsilon))-\nabla (\varphi^\epsilon)^2  \cdot  Q(\partial^\zeta_x u^\epsilon)\Big)\cdot \partial^\zeta_x \overline{u}^\epsilon
:=\sum_{i=1}^{8} I_i,\\
\end{split}
\end{equation}
where $\overline{u}^\epsilon=u^\epsilon-u$.  For $|\zeta|=1$,   one gets
\begin{equation}\label{E:4.9}\begin{split}\displaystyle
I_1=&\int (\partial^\zeta_x \overline{W}^\epsilon)^\top\text{div}A (W^\epsilon)\partial^\zeta_x \overline{W}^\epsilon
\leq C|\nabla W^\epsilon|_\infty|\nabla \overline{W}^\epsilon|^2_2\leq  C| \overline{W}^\epsilon|^2_{D^1},\\
\displaystyle
I_2=&-2\sum_{j=1}^3\int(\partial_x^\zeta \overline{W}^\epsilon)^\top\partial_x^\zeta (A_j(\overline{W}^\epsilon))\partial_j W
\leq C|\nabla W|_\infty|\overline{W}^\epsilon|^2_{D^1}\leq C|\overline{W}^\epsilon|^2_{D^1},\\
\displaystyle
I_3=&2\sum_{j=1}^3\int\left(\partial_x^\zeta (A_j(W^\epsilon)\partial_j\overline{W}^\epsilon)-A_j(W^\epsilon)\partial_j(\partial_x^\zeta \overline{W}^\epsilon)\right)\cdot\partial_x^\zeta \overline{W}^\epsilon\\[2pt]
\leq& C|\nabla W^\epsilon|_\infty|\nabla\overline{W}^\epsilon|^2_{2}\leq C|\overline{W}^\epsilon|^2_{D^1},\\
\end{split}\end{equation}
and 
\begin{equation}\label{E:4.9a}\begin{split}
\displaystyle
 I_4=&- 2\sum_{j=1}^3 \int  \left(\partial_x^\zeta\big(A_j(\overline{W}^\epsilon)\partial_j W \big) -\partial_x^\zeta (A_j(\overline{W}^\epsilon))\partial_j  W \right)\cdot\partial_x^\zeta \overline{W}^\epsilon \\
\leq &C|\nabla^2 W|_3|\nabla \overline{W}^\epsilon|_2|\overline{W}^\epsilon|_6
\leq C| \overline{W}^\epsilon|^2_{D^1},\\
I_5=&-2a_1 \epsilon\int \Big( (\varphi^\epsilon)^2 \cdot L(\partial^\zeta_x u^\epsilon)\Big) \cdot \partial^\zeta_x \overline{u}^\epsilon\\
\leq& C\epsilon|\varphi^\epsilon|^2_\infty| \nabla^3 u^\epsilon|_2|\nabla \overline{u}^\epsilon|_2
\leq  C\epsilon|\nabla\overline{u}^\epsilon|_2
\leq   C\epsilon^2+C|\overline{W}^\epsilon|^2_{D^1},\\
I_6=&2a_1\epsilon\int \Big(\nabla (\varphi^\epsilon)^2 \cdot Q(\partial^\zeta_x u^\epsilon)\Big) \cdot \partial^\zeta_x \overline{u}^\epsilon\\
\leq& C\epsilon|\varphi^\epsilon|_\infty|\nabla \varphi^\epsilon|_3|\nabla^2 u^\epsilon|_6|\nabla \overline{u}^\epsilon|_2
%\leq& C\epsilon|\nabla \varphi^\epsilon|^{\frac{1}{2}}_2|\nabla \varphi^\epsilon|^{\frac{1}{2}}_6|\nabla^3 u^\epsilon|_2|\nabla \overline{u}^\epsilon|_2\\[2pt]
%\leq& C\epsilon|\nabla \varphi^\epsilon|^{\frac{1}{2}}_2|\nabla^2 \varphi^\epsilon|^{\frac{1}{2}}_2|\nabla^3 u^\epsilon|_2|\nabla %\overline{u}^\epsilon|_2\\[2pt]
\leq C\epsilon|\nabla \overline{u}^\epsilon|_2\leq  C\epsilon^2+C|\overline{W}^\epsilon|^2_{D^1},\\
%\end{split}
%\end{equation}
%Similarly, for $I_7-I_8$, using  \eqref{zhen6a}, we have
%\begin{equation}\label{E:4.9a}
%\begin{split}
I_7=&-2a_1 \epsilon\int \Big( \partial^\zeta_x ((\varphi^\epsilon)^2 Lu^\epsilon)-(\varphi^\epsilon)^2 L\partial^\zeta_x u^\epsilon \Big)\cdot \partial^\zeta_x \overline{u}^\epsilon\\
\leq& C\epsilon|\varphi^\epsilon|_\infty|\nabla \varphi^\epsilon|_3|\nabla^2 u^\epsilon|_6|\nabla \overline{u}^\epsilon|_2
\leq  C\epsilon^{2}+C|\overline{W}^\epsilon|^2_{D^1},\\
I_8=&2a_1 \epsilon\int  \Big(\partial^\zeta_x(\nabla (\varphi^\epsilon)^2  \cdot Q(u^\epsilon))-\nabla (\varphi^\epsilon)^2  \cdot  Q(\partial^\zeta_x u^\epsilon)\Big)\cdot \partial^\zeta_x \overline{u}^\epsilon\\
\leq& C\epsilon(|\varphi^\epsilon|_\infty|\nabla u^\epsilon|_\infty|\nabla^2 \varphi^\epsilon|_2|\nabla \overline{u}^\epsilon|_2
+|\nabla u^\epsilon|_\infty|\nabla \varphi^\epsilon|_3|\nabla \varphi^\epsilon|_6|\nabla \overline{u}^\epsilon|_2)\\[8pt]
%\leq& C\epsilon(|\nabla \overline{u}^\epsilon|_2
%+|\nabla \varphi^\epsilon|^{\frac{1}{2}}_2|\nabla \varphi^\epsilon|^{\frac{1}{2}}_6|\nabla^2 \varphi^\epsilon|_2|\nabla %\overline{u}^\epsilon|_2)\\[8pt]
%\leq& C\epsilon(|\nabla \overline{u}^\epsilon|_2
%+|\nabla \varphi^\epsilon|^{\frac{1}{2}}_2|\nabla^2 \varphi^\epsilon|^{\frac{1}{2}}_2|\nabla^2 \varphi^\epsilon|_2|\nabla %\overline{u}^\epsilon|_2)\\[8pt]
\leq&C\epsilon|\nabla \overline{u}^\epsilon|_2\leq  C\epsilon^2+C|\overline{W}^\epsilon|^2_{D^1}.
\end{split}\end{equation}
Substituting \eqref{E:4.9}-\eqref{E:4.9a}  into \eqref{E:4.8}, one has
$$
 \frac{d}{dt}\int(\partial^\zeta_x \overline{W}^\epsilon)^\top A_0 \partial^\zeta_x \overline{W}^\epsilon\leq C\epsilon^2+C|\overline{W}^\epsilon|^2_{D^1}.
$$

Then according to the  Gronwall's inequality,  \eqref{E:4.6} follows immediately.

\end{proof}
For $|\zeta|=2,$ we have
\begin{lemma}\label{L:4.3} If $(\varphi^\epsilon,W^\epsilon)$ and $W$ are the regular solutions of Navier-Stokes \eqref{E:1.1} and Euler equations \eqref{E:4.2} respectively, then we have
\begin{equation}\label{E:4.13}
|\overline{W}^\epsilon|_{D^2}\leq C\epsilon^{\frac{1}{2}},\quad \text{for} \quad 0\leq t\leq T_*,
\end{equation}
where the constant $C>0$ depends on $A$, $\alpha$, $\beta$, $\delta$, $(\rho_0, u_0)$ and $ T_*$.
\end{lemma}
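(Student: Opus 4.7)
\noindent\textbf{Proof proposal for Lemma \ref{L:4.3}.} The plan is to mimic the template of Lemmas \ref{L:4.1} and \ref{L:4.2} one derivative higher: apply $\partial_x^\zeta$ with $|\zeta|=2$ to the difference equation \eqref{E:4.3}, take the $L^2$ inner product with $\partial_x^\zeta \overline{W}^\epsilon$, and use the symmetry of $A_0$ and $A_j(W^\epsilon)$ to reach an identity of exactly the form \eqref{E:4.8} with the eight terms $I_1,\ldots,I_8$ reinterpreted for $|\zeta|=2$. The uniform bounds \eqref{jkka} and \eqref{jkkab} (giving $\|W^\epsilon\|_3, \|\varphi^\epsilon\|_2, \|W\|_3 \leq C$) will be used throughout.

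\smallskip
\noindent First I would dispose of the ``hyperbolic'' contributions $I_1$--$I_4$. These involve only first or second derivatives of $W^\epsilon$, $W$, $\overline{W}^\epsilon$; applying Lemma \ref{zhen1} (commutator estimate) with $r=2$, $a=\infty$, $b=2$ to $I_3$ and $I_4$, and H\"older combined with Sobolev embedding to $I_1,I_2$, each of these terms is bounded by $C|\overline{W}^\epsilon|_{D^2}^2+C|\overline{W}^\epsilon|_{D^1}\,|\overline{W}^\epsilon|_{D^2}$, which by Lemma \ref{L:4.2} is $\leq C|\overline{W}^\epsilon|_{D^2}^2+C\epsilon^2$.

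\smallskip
\noindent The decisive part is the viscous/source group $I_5$--$I_8$. For $|\zeta|=2$, the highest-order term in $I_5$ contains $\epsilon(\varphi^\epsilon)^2 \nabla^4 u^\epsilon \cdot \nabla^2 \overline{u}^\epsilon$, and I would estimate it as
\begin{equation*}
|I_5|\leq C\epsilon\,|\varphi^\epsilon|_\infty\,|\varphi^\epsilon\nabla^4 u^\epsilon|_2\,|\nabla^2\overline{u}^\epsilon|_2
\leq C\epsilon\,|\varphi^\epsilon\nabla^4 u^\epsilon|_2\,|\overline{W}^\epsilon|_{D^2}.
\end{equation*}
Here $|\varphi^\epsilon\nabla^4 u^\epsilon|_2$ is \emph{not} uniformly bounded in $t$, but $\int_0^{T_*}\epsilon|\varphi^\epsilon\nabla^4 u^\epsilon|_2^2\,ds\leq C^0$ by \eqref{jkka}. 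The same type of estimate handles $I_6$ (viscosity-density cross term with one derivative on $\varphi^\epsilon$ and three on $u^\epsilon$, using $|\nabla\varphi^\epsilon|_\infty\leq C$ from \eqref{zhen6a}), and $I_7$, $I_8$ after an $\eta$-scheme expansion identical to that used in Lemma \ref{s7}: all remaining pieces either absorb into $C|\overline{W}^\epsilon|_{D^2}^2$ or contribute harmless $O(\epsilon^2)$ terms plus a single controllable piece of the form $\epsilon|\varphi^\epsilon\nabla^4 u^\epsilon|_2|\overline{W}^\epsilon|_{D^2}$.

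\smallskip
\noindent Combining the estimates, integrating in time, and using Cauchy--Schwarz on the critical term yields
\begin{equation*}
\int_0^t \epsilon\,|\varphi^\epsilon\nabla^4 u^\epsilon|_2\,|\overline{W}^\epsilon|_{D^2}\,ds
\leq \epsilon^{1/2}\Big(\int_0^t\!\epsilon|\varphi^\epsilon\nabla^4 u^\epsilon|_2^2\,ds\Big)^{1/2}\!\Big(\int_0^t|\overline{W}^\epsilon|_{D^2}^2\,ds\Big)^{1/2}
\leq C\epsilon+\tfrac14\!\int_0^t|\overline{W}^\epsilon|_{D^2}^2\,ds,
\end{equation*}
so that with $\overline{W}^\epsilon(0)\equiv 0$ one obtains $|\overline{W}^\epsilon(t)|_{D^2}^2\leq C\epsilon+C\int_0^t|\overline{W}^\epsilon|_{D^2}^2\,ds$, and Gronwall's inequality delivers $|\overline{W}^\epsilon|_{D^2}\leq C\epsilon^{1/2}$ on $[0,T_*]$. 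The main obstacle is precisely the term above: at this order we lose a factor $\epsilon^{1/2}$ compared with Lemma \ref{L:4.2} because the uniform estimate available for $\varphi^\epsilon\nabla^4 u^\epsilon$ is only of weighted $L^2_t$ type (with weight $\epsilon^{1/2}$), not uniform in $t$; this is intrinsic and produces exactly the rate $\epsilon^{1/2}$ stated in \eqref{E:4.13}.
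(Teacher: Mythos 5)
Your proposal follows essentially the same route as the paper: the energy identity \eqref{E:4.8} with $|\zeta|=2$, the same disposal of $I_1$--$I_4$, and the recognition that the critical term $I_5$ can only be closed through the time-integrated bound $\int_0^{T_*}\epsilon|\varphi^\epsilon\nabla^4u^\epsilon|_2^2\,\mathrm{d}s\leq C^0$ from \eqref{jkka} (the paper uses pointwise Young, $\epsilon ab\leq C\epsilon^2a^2+Cb^2$, where you use time-integrated Cauchy--Schwarz; these are equivalent and both yield the $O(\epsilon)$ forcing that Gronwall converts into the rate $\epsilon^{1/2}$). Two small imprecisions worth correcting: \eqref{zhen6a} gives $|\nabla\varphi^\epsilon|_\infty\leq C\epsilon^{-1/4}$, not $\leq C$, so $I_6$ contributes $C\epsilon^{3/4}|\nabla^2\overline{u}^\epsilon|_2\leq C\epsilon^{3/2}+C|\overline{W}^\epsilon|_{D^2}^2$ rather than $O(\epsilon^2)$, and $I_8$ involves $|\nabla^3\varphi^\epsilon|_2\leq C\epsilon^{-1/2}$ and contributes a full $O(\epsilon)$ --- still compatible with the stated rate, but it means the degradation to $\epsilon^{1/2}$ is caused by the weighted estimates on $\nabla^3\varphi^\epsilon$ as well as by $\varphi^\epsilon\nabla^4u^\epsilon$.
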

\begin{proof}
For $|\zeta|=2,$ it follows from \eqref{zhen6a} and  \eqref{E:4.8} that 
\begin{equation}\label{E:4.15}\begin{split}\displaystyle
I_1=&\int (\partial^\zeta_x \overline{W}^\epsilon)^\top\text{div}A (W^\epsilon)\partial^\zeta_x \overline{W}^\epsilon
\leq C|\nabla W^\epsilon|_\infty|\nabla^2 \overline{W}^\epsilon|^2_2
\leq  C| \overline{W}^\epsilon|^2_{D^2},\\
I_2=&2\sum_{j=1}^3\int\left(\partial_x^\zeta (A_j(W^\epsilon)\partial_j\overline{W}^\epsilon)-A_j(W^\epsilon)\partial_j(\partial_x^\zeta \overline{W}^\epsilon)\right)\cdot\partial_x^\zeta \overline{W}^\epsilon\\
\leq&  C(|\nabla W^\epsilon|_\infty|\nabla^2\overline{W}^\epsilon|_2+|\nabla^2 W^\epsilon|_3|\nabla\overline{W}^\epsilon|_6)|\nabla^2\overline{W}^\epsilon|_2
%\leq&  C(|\nabla W^\epsilon|^{\frac{1}{2}}_6|\nabla^2 W^\epsilon|^{\frac{1}{2}}_6|\nabla^2\overline{W}^\epsilon|_2+|\nabla^2 %W^\epsilon|^{\frac{1}{2}}_2|\nabla^2 W^\epsilon|^{\frac{1}{2}}_6|\nabla^2\overline{W}^\epsilon|_2)|\nabla^2\overline{W}^\epsilon|_2
\leq C|\overline{W}^\epsilon|^2_{D^2},\\
\end{split}\end{equation}
and 
\begin{equation}\label{E:4.18}\begin{split}
\displaystyle
I_3=&-2\sum_{j=1}^3\int(\partial_x^\zeta \overline{W}^\epsilon)^\top\partial_x^\zeta (A_j(\overline{W}^\epsilon))\partial_j W\\
\leq& C|\nabla W|_\infty|\overline{W}^\epsilon|^2_{D^2}\leq C|\overline{W}^\epsilon|^2_{D^2},\\
%\end{split}\end{equation}
%\begin{equation}\begin{split}\label{E:4.15a}
 I_4=&- 2\sum_{j=1}^3 \int  \left(\partial_x^\zeta\big(A_j(\overline{W}^\epsilon)\partial_j W \big) -\partial_x^\zeta (A_j(\overline{W}^\epsilon))\partial_j  W \right)\cdot\partial_x^\zeta \overline{W}^\epsilon \\
 \leq &C\int\left(|\nabla\overline{W}^\epsilon||\nabla^2 W|+|\overline{W}^\epsilon||\nabla^3 W|\right)|\nabla^2\overline{W}^\epsilon|\\
\leq& C\left( |\nabla \overline{W}^\epsilon|_6|\nabla^2 W|_3|\nabla^2 \overline{W}^\epsilon|_2+|\nabla^3 W|_2|\overline{W}^\epsilon|_\infty|\nabla^2 \overline{W}^\epsilon|_2\right)\\
%\leq& C\left( |\nabla^2 \overline{W}^\epsilon|_2|\nabla^2 W|^{\frac{1}{2}}_2|\nabla^2 W|^{\frac{1}{2}}_6|\nabla^2 \overline{W}^\epsilon|_2+|\nabla^3 %W|_2| \overline{W}^\epsilon|_6^{\frac{1}{2}}|\nabla \overline{W}^\epsilon|_6^{\frac{1}{2}}|\nabla^2 \overline{W}^\epsilon|_2\right)\\
%\leq& C\left( |\nabla^2 \overline{W}^\epsilon|^2_2|\nabla^2 W|^{\frac{1}{2}}_2|\nabla^3 W|^{\frac{1}{2}}_2+| %\nabla\overline{W}^\epsilon|_2^{\frac{1}{2}}|\nabla^2 \overline{W}^\epsilon|_2^{\frac{1}{2}}|\nabla^2 \overline{W}^\epsilon|_2\right)\\
\leq& C\left( |\nabla^2 \overline{W}^\epsilon|^2_2+\epsilon^{\frac{1}{2}}|\nabla^2 \overline{W}^\epsilon|_2^{\frac{3}{2}}\right)
\leq C| \overline{W}^\epsilon|^2_{D^2}+C\epsilon^2,\\
I_5=&-2a_1 \epsilon\int \Big( (\varphi^\epsilon)^2 \cdot L(\partial^\zeta_x u^\epsilon)\Big) \cdot \partial^\zeta_x \overline{u}^\epsilon\\
\leq& C\epsilon|\varphi^\epsilon|_\infty|\varphi^\epsilon\nabla^4 u^\epsilon|_2|\nabla^2 \overline{u}^\epsilon|_2
\leq  C\epsilon^2|\varphi^\epsilon\nabla^4 u^\epsilon|_2^2+C|\overline{W}^\epsilon|^2_{D^2},\\
I_6=&2a_1\epsilon\int \Big(\nabla (\varphi^\epsilon)^2 \cdot Q(\partial^\zeta_x u^\epsilon)\Big) \cdot \partial^\zeta_x \overline{u}^\epsilon\\
\leq& C\epsilon|\varphi^\epsilon|_\infty|\nabla \varphi^\epsilon|_\infty|\nabla^3 u^\epsilon|_2|\nabla^2 \overline{u}^\epsilon|_2
%\leq& C\epsilon|\nabla\varphi^\epsilon|_6^{\frac{1}{2}}|\nabla^2\varphi^\epsilon|_6^{\frac{1}{2}}|\nabla^2 \overline{u}^\epsilon|_2
%\leq C\epsilon|\nabla^2\varphi^\epsilon|_2^{\frac{1}{2}}|\nabla^3\varphi^\epsilon|_2^{\frac{1}{2}}|\nabla^2 \overline{u}^\epsilon|_2\\
\leq C\epsilon^{\frac{3}{4}} |\nabla^2 \overline{u}^\epsilon|_2\leq  C\epsilon^{\frac{3}{2}}+C|\overline{W}^\epsilon|^2_{D^2},\\
I_7=&-2a_1 \epsilon\int \Big( \partial^\zeta_x ((\varphi^\epsilon)^2 Lu^\epsilon)-(\varphi^\epsilon)^2 L\partial^\zeta_x u^\epsilon \Big)\cdot \partial^\zeta_x \overline{u}^\epsilon\\
\leq& C\epsilon\int\Big(\big(|\varphi^\epsilon||\nabla^2\varphi^\epsilon|+|\nabla \varphi^\epsilon|^2\big)|\nabla^2 u^\epsilon|+|\varphi^\epsilon||\nabla \varphi^\epsilon||\nabla^3 u^\epsilon|\Big)|\nabla^2\overline{u}^\epsilon|\\
\leq &C\epsilon|\varphi^\epsilon|_\infty
|\nabla^2\varphi^\epsilon|_3|\nabla^2u^\epsilon|_6|\nabla^2 \overline{u}^\epsilon|_2+C\epsilon|\nabla \varphi^\epsilon|^2_6|\nabla^2 u^\epsilon|_6|\nabla^2 \overline{u}^\epsilon|_2\\
&+C\epsilon|\varphi^\epsilon|_\infty|\nabla \varphi^\epsilon|_\infty|\nabla^3 u^\epsilon|_2|\nabla^2 \overline{u}^\epsilon|_2
%\leq & C\epsilon|\nabla^2\varphi^\epsilon|^{\frac{1}{2}}_2|\nabla^2\varphi^\epsilon|^{\frac{1}{2}}_6|\nabla^3u^\epsilon|_2|\nabla^2 %\overline{u}^\epsilon|_2+C\epsilon|\nabla^2 \varphi^\epsilon|^2_2|\nabla^3 u^\epsilon|_2|\nabla^2 \overline{u}^\epsilon|_2\\
%&+C\epsilon|\nabla\varphi^\epsilon|^{\frac{1}{2}}_6|\nabla^2\varphi^\epsilon|^{\frac{1}{2}}_6|\nabla^3 u^\epsilon|_2|\nabla^2 %\overline{u}^\epsilon|_2\\
%\leq & C\epsilon|\nabla^3\varphi^\epsilon|^{\frac{1}{2}}_2|\nabla^2 \overline{u}^\epsilon|_2+C\epsilon|\nabla^2 \overline{u}^\epsilon|_2
%+C\epsilon|\nabla^2\varphi^\epsilon|^{\frac{1}{2}}_2|\nabla^3\varphi^\epsilon|^{\frac{1}{2}}_2|\nabla^2 \overline{u}^\epsilon|_2\\
%\leq & C\epsilon^{\frac{3}{4}} |\nabla^2 \overline{u}^\epsilon|_2+C\epsilon|\nabla^2 \overline{u}^\epsilon|_2\\
\leq C\epsilon^{\frac{3}{2}}+C|\overline{u}^\epsilon|^2_{D^2},\\
%\end{split}\end{equation}
%
%For the term $I_8$, we have
%\begin{equation}\label{E:4.18}\begin{split}
I_8=& \epsilon\int\left(\partial_x^\zeta \big(\nabla(\varphi^\epsilon)^2\cdot Q(u^\epsilon)\big)-\nabla(\varphi^\epsilon)^2
\cdot Q(\partial_x^\zeta u^\epsilon)\right)\cdot\partial_x^\zeta \overline{u}^\epsilon\\
\leq& C\epsilon\int \big(|\varphi^\epsilon ||\nabla^3\varphi^\epsilon|+|\nabla\varphi^\epsilon||\nabla^2\varphi^\epsilon|\big)|\nabla u^\epsilon||\nabla^2\overline{u}^\epsilon|\\
&+ C\epsilon\int \big(|\varphi^\epsilon||\nabla^2 \varphi^\epsilon|+|\nabla\varphi^\epsilon|^2\big)|\nabla^2 u^\epsilon||\nabla^2\overline{u}^\epsilon|\\
\leq& C\epsilon|\varphi^\epsilon|_\infty|\nabla u^\epsilon|_\infty |\nabla^3 \varphi^\epsilon|_2|\nabla^2 \overline{u}^\epsilon|_2+
C\epsilon|\nabla\varphi^\epsilon|_3|\nabla^2\varphi^\epsilon|_6|\nabla u^\epsilon|_\infty|\nabla^2\overline{u}^\epsilon|_2\\[2pt]
 &+C\epsilon|\varphi^\epsilon|_\infty|\nabla^2 \varphi^\epsilon|_3|\nabla^2 u^\epsilon|_6 |\nabla^2 \overline{u}^\epsilon|_2+
C\epsilon|\nabla\varphi^\epsilon|_6|\nabla\varphi^\epsilon|_6|\nabla^2 u^\epsilon|_6|\nabla^2\overline{u}^\epsilon|_2\\[2pt]
%\leq & C\epsilon^{\frac{1}{2}}|\nabla^2\overline{u}^\epsilon|_2
%+C\epsilon|\nabla\varphi^\epsilon|^{\frac{1}{2}}_2|\nabla\varphi^\epsilon|^{\frac{1}{2}}_6|\nabla^3\varphi^\epsilon|_2
%|\nabla^2\overline{u}^\epsilon|_2\\[2pt]
%&+C\epsilon|\nabla^2\varphi^\epsilon|^{\frac{1}{2}}_2|\nabla^2\varphi^\epsilon|^{\frac{1}{2}}_6|\nabla^3u^\epsilon|_2
%|\nabla^2\overline{u}^\epsilon|_2
%+C\epsilon|\nabla^2\varphi^\epsilon|^2_2|\nabla^3 u^\epsilon|_3|\nabla^2\overline{u}^\epsilon|_2\\[2pt]
%\leq & C\epsilon^{\frac{1}{2}}|\nabla^2\overline{u}^\epsilon|_2
%+C\epsilon|\nabla^2\varphi^\epsilon|^{\frac{1}{2}}_2|\nabla^3\varphi^\epsilon|_2
%|\nabla^2\overline{u}^\epsilon|_2
%+C\epsilon|\nabla^3\varphi^\epsilon|^{\frac{1}{2}}_2
%|\nabla^2\overline{u}^\epsilon|_2
%+C\epsilon|\nabla^2\overline{u}^\epsilon|_2\\[2pt]
%\leq& C\epsilon^{\frac{1}{2}}|\nabla^2\overline{u}^\epsilon|_2+C\epsilon^{\frac{3}{4}}
%|\nabla^2\overline{u}^\epsilon|_2+C\epsilon|\nabla^2\overline{u}^\epsilon|_2\\[2pt]
\leq& C\epsilon^{\frac{1}{2}}|\nabla^2\overline{u}^\epsilon|_2\leq
C\epsilon+C|\nabla^2\overline{u}^\epsilon|^2_{D^2}.
\end{split}\end{equation}
Substituting \eqref{E:4.15}-\eqref{E:4.18} into \eqref{E:4.8},  we have
$$
 \frac{d}{dt}\int(\partial^\zeta_x \overline{W}^\epsilon)^\top A_0 \partial^\zeta_x \overline{W}^\epsilon\leq
C\epsilon+C|\overline{W}^\epsilon|^2_{D^2}+C\epsilon^2|\varphi^\epsilon\nabla^4 u^\epsilon|_2^2.
$$

Then according to the Gronwall's inequality, 
 \eqref{E:4.13} follows immediately.
\end{proof}

Finally, we have
\begin{lemma}\label{L:4.4}If $(\varphi^\epsilon,W^\epsilon)$ and $W$ are the regular solutions of Navier-Stokes \eqref{E:1.1} and Euler equations \eqref{E:4.2} respectively, then we have
\begin{equation}\label{E:4.13L}
\sup\limits_{0\leq t\leq T_*}\|\overline{W}^\epsilon(t)\|_{H^{s'}}\leq C\epsilon^{1-\frac{s'}{3}},
\end{equation}
where $s'\in(2,3)$ and the constant $C>0$ depends on $A$, $\alpha$, $\beta$, $\delta$, $(\rho_0, u_0)$ and $ T_*$.
\end{lemma}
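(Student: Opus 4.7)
The plan is to obtain \eqref{E:4.13L} by interpolating between the sharp $L^2$ rate $\epsilon$ of Lemma~\ref{L:4.1} and the uniform $H^3$ bound inherited from the a priori estimates for both the viscous and inviscid problems. In this sense the lemma is essentially a soft corollary of the work already done in Lemmas~\ref{L:4.1}--\ref{L:4.3} together with Theorems~\ref{th2} and~\ref{thmakio}; the point is merely to cover the remaining range $s'\in(2,3)$ where one cannot close a direct energy estimate (because $\overline{W}^\epsilon$ is only bounded, not small, in $H^3$).

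First I would record the uniform $H^3$ control. By \eqref{jkka} the regular solution $W^\epsilon$ of \eqref{E:1.1} satisfies
$$
\sup_{0\le t\le T_*}\|W^\epsilon(t)\|_3\le C,
$$
with $C$ independent of $\epsilon$, and by \eqref{jkkab} the regular solution $W$ of \eqref{E:4.2} satisfies the same bound. The triangle inequality then yields
$$
\sup_{0\le t\le T_*}\|\overline{W}^\epsilon(t)\|_3\le C.
$$

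Next I would invoke the already proved $L^2$-convergence rate of Lemma~\ref{L:4.1},
$$
\sup_{0\le t\le T_*}\|\overline{W}^\epsilon(t)\|_0=\sup_{0\le t\le T_*}|\overline{W}^\epsilon(t)|_2\le C\epsilon.
$$
Applying the interpolation inequality of Lemma~\ref{gag111} with $s=3$ and $s'\in(2,3)$,
$$
\|\overline{W}^\epsilon(t)\|_{s'}\le C_3\,\|\overline{W}^\epsilon(t)\|_0^{\,1-s'/3}\,\|\overline{W}^\epsilon(t)\|_3^{\,s'/3}
\le C\,(C\epsilon)^{1-s'/3}\,C^{s'/3}\le C\epsilon^{\,1-s'/3},
$$
uniformly in $t\in[0,T_*]$, which is precisely \eqref{E:4.13L}.

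There is no genuine obstacle to overcome at this stage: the delicate work, in particular obtaining uniform-in-$\epsilon$ $H^3$ bounds for $W^\epsilon$ and the sharp $L^2$ rate for $\overline{W}^\epsilon$, has already been carried out in Section~3 and in Lemma~\ref{L:4.1}. The only thing worth a brief comment in the write-up is that a direct energy estimate for $\|\overline{W}^\epsilon\|_{s'}$ with $s'>2$ would require estimating three derivatives of $\overline{W}^\epsilon$, and the terms $\epsilon(\varphi^\epsilon)^2 L W^\epsilon$ are only uniformly bounded, not small, at that level of regularity; interpolation is therefore the natural (and necessary) route, and it yields the stated rate $\epsilon^{1-s'/3}$ which degenerates to $1$ as $s'\to 3^-$, consistently with the qualitative convergence statement in \eqref{shou1}.
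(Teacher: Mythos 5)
Your proposal is correct and follows exactly the paper's argument: interpolate via Lemma \ref{gag111} between the $L^2$ rate $|\overline{W}^\epsilon|_2\le C\epsilon$ of Lemma \ref{L:4.1} and the uniform $H^3$ bounds furnished by \eqref{jkka} and \eqref{jkkab}. Nothing further is needed.
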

\begin{proof}
Based on Lemma \ref{gag111}, \eqref{th78qq}, \eqref{jkka} and  \eqref{jkkab}, we have
$$
\|\overline{W}^\epsilon(t)\|_{s'}\leq C\|\overline{W}_\epsilon(t)\|^{1-\frac{s'}{3}}_0\|\overline{W}_\epsilon(t)\|_3^{\frac{s'}{3}}\leq
C\epsilon^{1-\frac{s'}{3}}.
$$
\end{proof}

Finally, we give the proof for Theorem \ref{th3}.
\begin{proof}

First, according to  Lemmas \ref{L:4.1}-\ref{L:4.4},
 when $\epsilon\to 0$, the solutions 
$\big(\rho^{\epsilon}, u^{\epsilon}\big)$ of compressible Navier-Stokes solutions
converges to the solution $(\rho, u)$ of compressible Euler equations in the following sense
\begin{equation}\label{shou1gg}
\lim_{\epsilon \to 0}\sup_{0\leq t\leq
T_*}\left(\Big\|\Big((\rho^{\epsilon})^{\frac{\gamma-1}{2}}-\rho^{\frac{\gamma-1}{2}}\Big)(t)\Big\|_{H^{s'}}+\big\|\big(u^\epsilon
-u\big)(t)\big\|_{H^{s'}}\right)=0,
\end{equation}
for any constant  $s'\in [0, 3)$.  Moreover, one can also obtain that
\begin{equation}\label{decaygg}
\begin{split}
\sup_{0\leq t \leq T_*}\left(\Big\|\Big((\rho^{\epsilon})^{\frac{\gamma-1}{2}}-\rho^{\frac{\gamma-1}{2}}\Big)(t)\Big\|_1+\|\big(u^\epsilon -u\big)(t)\|_1\right)\leq& C\epsilon,\\
\sup_{0\leq t\leq
T_*}\left(\Big|\Big((\rho^{\epsilon})^{\frac{\gamma-1}{2}}-\rho^{\frac{\gamma-1}{2}}\Big)(t)\Big|_{D^2}+|\big(u^\epsilon
-u)(t)|_{D^2}\right)\leq& C\sqrt{\epsilon},
\end{split}
\end{equation}
where $C>0$ is a constant depending only on the fixed
 constants $A, \delta, \gamma, \alpha, \beta, T_*$ and $ \rho_0, u_0$.

Further more, if  the condition (\ref{regco}) holds,  one has  $\frac{2}{\delta-1}\geq 3$. Then from (\ref{shou1gg})-(\ref{decaygg}), it is not hard to see that  
\begin{equation}\label{decay11gg}
\begin{split}
\lim_{\epsilon \to 0}\sup_{0\leq t\leq
T_*}\left(\big\|\big(\rho^{\epsilon}-\rho\big)(t)\big\|_{H^{s'}}+\big\|\big(u^\epsilon
-u\big)(t)\big\|_{H^{s'}}\right)=&0,\\
\sup_{0\leq t \leq T_*}\left(\big\|\rho^{\epsilon}-\rho)(t)\big\|_1+\|\big(u^\epsilon -u\big)(t)\|_1\right)\leq& C\epsilon,\\
\sup_{0\leq t\leq
T_*}\left(\big|\big(\rho^{\epsilon}-\rho\big)(t)\big|_{D^2}+|\big(u^\epsilon
-u)(t)|_{D^2}\right)\leq& C\sqrt{\epsilon}.
\end{split}
\end{equation}

Thus the proof of Theorem 1.3 is finished.
\end{proof}

\begin{remark}\label{ding}
For the case $\delta=1$ with  vacuum at the far field, 
via introducing two different symmetric structures in Ding-Zhu  \cite{ding},
 some uniform estimates with respect to the viscosity coefficients for
$\displaystyle\Big(\rho^{\frac{\gamma-1}{2}}, u\Big)$ in $H^3(\mathbb{R}^2)$
and $\displaystyle\nabla \rho/\rho$ in $L^6\cap D^1(\mathbb{R}^2)$  have been obtained, which
lead the convergence of the regular solution of  the viscous flow to
that of the  inviscid flow still   in $L^{\infty}([0, T]; H^{s'}(\mathbb{R}^2))$ (for any
$s'\in [2, 3)$)  with the rate of $\epsilon^{2(1-\frac{s'}{3})}$.  Their conclusion also applies to  the 2-D shallow water equations \eqref{eq:1.1ww}.\end{remark}

\bigskip

{\bf Acknowledgement:}
This research  was funded in part
by National Natural Science Foundation of China under grants 11201308, 11231006 and 11571232, and Natural Science Foundation of Shanghai under grant 14ZR1423100. Y. Li was also funded by Shanghai Committee of Science and Technology under grant 15XD1502300. 

\bigskip

{\bf Compliance with Ethical Standards:} The authors are proud to comply with the required ethical standards
by Archive for Rational Mechanics and Analysis. 

\bigskip

{\bf Conflict of Interest:} The authors declare that they have no conflict of interest.

\bigskip

\end{document}